\newtheorem{theorem}{Theorem}
\newtheorem*{theorem*}{Theorem}
\newtheorem{corollary}[theorem]{Corollary}
\newtheorem{definition}[theorem]{Definition}
\newtheorem{lemma}[theorem]{Lemma}
\newtheorem*{lemma*}{Lemma}
\newtheorem{proposition}[theorem]{Proposition}
\newtheorem*{proposition*}{Proposition}
\theoremstyle{definition}
\newtheorem{remark}[theorem]{Remark}
\newtheorem{example}[theorem]{Example}
\thanks{The author is funded by the Spanish Ministerio de Ciencia e Innovaci\'on through a Ram\'on y Cajal grant (RYC2018-025843-I)}
\begin{document}

\author{J. J. S\'anchez-Gabites}
\address{Facultad de Ciencias Matem{\'a}ticas. Universidad Complutense de Madrid. 28040 Madrid (Espa{\~{n}}a)}
\email{jajsanch@ucm.es}

%\keywords{Invariant knots and links, Low dimensional dynamics, }
\subjclass[2020]{37B35, 37E99, 57K10}

\title[]{Using an invariant knot of a flow to find additional invariant structure}

\begin{abstract} Consider a continuous flow in $\mathbb{R}^3$ or any orientable $3$--manifold. Let $N \subseteq \mathbb{R}^3$ be a compact $3$--manifold such that trajectories of the flow cross $\partial N$ inwards or outwards transversally, or bounce off it from the outside. Suppose we know there exists an invariant knot or link $K$ in the interior of $N$. We prove a generalization of the following: if $K$ is contractible and nontrivial (in the sense of knot theory) in $N$, then every neighbourhood $U$ of $K$ contains a point $p \in N - K$ such that the whole trajectory of $p$ is contained in $N$. In other words, there is additional invariant structure in $N$ besides $K$ and it can be found arbitrarily close to $K$.

To prove this result we develop a ``coloured'' handle theory which may be of independent interest to study flows in $3$-manifolds.
\end{abstract}

\begin{abstract} Consider a continuous flow in $\mathbb{R}^3$ or any orientable $3$--manifold. Let $(Q_1,Q_0)$ be an index pair in the sense of Conley and consider the region $N := \overline{Q_1-Q_0}$. (An example of this is a compact $3$--manifold $N$ such that trajectories of the flow cross $\partial N$ inwards or outwards transversally, or bounce off it from the outside). Suppose we know there is an invariant knot or link $K$ in the interior of $N$. We prove the following: if $K$ is contractible and nontrivial (in the sense of knot theory) in $N$, then every neighbourhood $U$ of $K$ contains a point $p \in N - K$ such that the whole trajectory of $p$ is contained in $N$. In other words, the presence of $K$ forces the existence of additional invariant structure in $N$ (besides $K$), and the latter can actually be found arbitrarily close to $K$.

To prove this result we develop a ``coloured'' handle theory which may be of independent interest to study flows in $3$-manifolds.
\end{abstract}

\maketitle

\section{Introduction}

In the theory of dynamical systems there exist several tools to establish the existence of invariant structure in a given region $N$ of phase space; for instance the fixed point index or the Conley index. If one already knows about some nonempty invariant set $K \subseteq N$ then these tools can also be used to infer the existence of additional invariant structure in $N$; i.e. to show that there exist points $p \in N - K$ whose full orbit is entirely contained in $N$. This is the overall flavour of the main theorem of the paper: for a flow in an orientable $3$--manifold we will show that the presence of a contractible invariant knot or link $K$ in $N$ forces, under appropriate assumptions, the existence of additional invariant structure.
\smallskip

A) We first present the main theorem in a simplified way. Let $\varphi$ be a continuous flow in the phase space $\mathbb{R}^3$. We abbreviate $\varphi(p,t)$ by $p \cdot t$, where $t$ is the time variable. Our region of interest will be a compact $3$--manifold $N \subseteq \mathbb{R}^3$ and, as usual, one has to impose some condition on the behaviour of the flow on its boundary $\partial N$. We borrow one from Conley and Easton (\cite{conleyeaston1}): $\partial N$ must be the union of two closed sets $N^o$ and $N^i$ (for ``out'' and ``in'') such that points in $N^o$ flow towards the outside of $N$ and points in $N^i$ flow from it. Formally, for each $p \in N^o$ there exists $\epsilon > 0$ such that $p \cdot (0,\epsilon) \cap N = \emptyset$ and for each $p \in N^i$ there exists $\epsilon > 0$ such that $p \cdot (-\epsilon,0) \cap N = \emptyset$. At points $p \in N^i \cap N^o$ both conditions hold so trajectories bounce off $N$. For example, in a smooth setting where the flow is generated by a vectorfield $X(p)$ and $N$ is a smooth manifold, this amounts to requiring that at points $p$ where $X(p)$ is tangent to $\partial N$ the orbit through $p$ is externally tangent to $N$. (Let $\nu(p)$ be an outward normal vector at each point of $\partial N$ and set $N^o$ and $N^i$ to be the subsets of $\partial N$ defined by $X(p) \cdot \nu(p) \geq 0$ and $X(p) \cdot \nu(p) \leq 0$ respectively).

The following definitions are standard. A knot $K$ is a (tame) simple closed curve. It is the trivial knot if it bounds an embedded disk $D$ (equivalently, if it can be carried by a homeomorphism of $\mathbb{R}^3$ onto some model of an unknotted curve; say the unit circle in the $xy$ plane). If $K \subseteq N$, we say that the knot $K$ is trivial in $N$ if it bounds an embedded disk $D \subseteq N$. A link $K$ is a union of finitely many disjoint knots $K_i$; if $K \subseteq N$ we say that the link $K$ is trivial in $N$ if there exist disjoint embedded disks $D_i \subseteq N$ such that each $K_i$ bounds the disk $D_i$.

A simplified version of the main theorem reads as follows:

\begin{theorem*}Let $N$ be a region as described. Assume that $N$ contains an invariant link $K$ which is contractible and nontrivial in $N$. Then every neighbourhood $U$ of $K$ contains a point $p \in U - K$ such that the full trajectory of $p$ is contained in $N$.
\end{theorem*}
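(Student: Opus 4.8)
I'll argue by contradiction: suppose there is a neighbourhood $U$ of $K$ such that no point of $U - K$ has its full trajectory contained in $N$. Then every orbit through a point of $U - K$ must eventually leave $N$, either in forward or backward time. The plan is to use this to build an embedded disk in $N$ bounded by (a component of) $K$, contradicting nontriviality in $N$. The hypotheses give us two things to work with: the knot $K$ is contractible in $N$ (so there is *some* singular disk spanning it, though possibly immersed and possibly leaving a tubular neighbourhood), and the Conley/Easton boundary condition controls how orbits exit $N$ across $N^o \cup N^i$.

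**The construction.** First I would pass to a small compact tubular neighbourhood $V$ of $K$ inside $U$, chosen so that $\partial V$ is a union of tori transverse to the flow-behaviour we need, and so that $K$ is a deformation retract of $V$. The key observation is that for each point $p \in \partial V$, the trajectory through $p$ reaches $\partial N$ in finite time in at least one time direction (since $p \notin K$ and the orbit cannot stay in $N$ forever by our contradiction hypothesis). I would then use this to define, roughly, a "first exit" map that pushes $\partial V$ out to $\partial N$ along the flow, and more importantly, pushes a spanning surface. Concretely: take a singular disk $f \colon D^2 \to N$ with $f(\partial D^2) = K_i$ realizing contractibility; perturb $f$ to be a nice (smooth, or PL) map in general position with respect to the flow; then the idea is to "flow the interior of the disk to the boundary of $N$" wherever it would otherwise intersect $K$ or fail to be embedded, using the coloured handle theory announced in the abstract to organize the combinatorics of how the disk meets the stable/unstable behaviour near $K$. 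The upshot should be an embedded disk $D \subseteq N$ with $\partial D = K_i$, i.e. $K$ is trivial in $N$ — contradiction.

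**Where the coloured handle theory enters, and the main obstacle.** The serious difficulty is that flowing a singular disk to $\partial N$ does not automatically produce an *embedded* disk: self-intersections of $f$ and intersections of $f(D^2)$ with $K$ itself are the obstruction, and naively pushing them off along the flow can create new singularities or can fail because different sheets exit $N$ through different parts of $\partial N$ (some through $N^o$, some through $N^i$, some bouncing). This is exactly what the "coloured" bookkeeping is for: one colours regions/handles according to which boundary component ($N^o$ versus $N^i$, or inward/outward crossing) the flow carries them to, and one performs handle cancellations and slides respecting the colours so that the resulting surface stays embedded and stays inside $N$. The hard part is proving the relevant coloured handle moves are available in this dimension — i.e. establishing a coloured analogue of the Whitney trick / light-bulb-type argument for disks in a $3$-manifold with a prescribed flow on the boundary — and checking that the contractibility hypothesis (rather than merely null-homology) is exactly what is needed to make the colouring consistent and the cancellation go through. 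I expect the rest (transversality, finiteness of exit times on the compact set $\overline{V} - (\text{interior collar of }K)$, and the final contradiction with nontriviality) to be comparatively routine once the coloured handle machinery is in place.
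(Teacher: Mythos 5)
Your proposal has the right high-level skeleton (argue by contradiction, bring in a solid-torus neighbourhood of $K$, use the coloured handle theory, ultimately contradict nontriviality via an embedded spanning disk), but the proposed mechanism for producing that embedded disk would not work, and it is quite different from what the paper actually does.

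The central difficulty is the step ``flow the interior of the disk to the boundary of $N$ wherever it would otherwise intersect $K$ or fail to be embedded.'' Flowing is a family of ambient homeomorphisms, so it \emph{preserves} self-intersections of the singular disk and preserves intersections with the invariant set $K$; it cannot resolve either one. And the hoped-for ``coloured analogue of the Whitney trick'' does not exist in dimension three (the Whitney trick is a high-dimensional tool), nor is that what the coloured handle theory is for. There is also a global obstruction you did not address: once the disk $f(D^2)$ leaves the small neighbourhood $U$, there is no guarantee that orbits through it ever reach $\partial N$; the contradiction hypothesis only controls points in $U - K$, and the maximal invariant set of $N$ may contain a nonempty compactum $K'$ disjoint from $K$ through which the disk might pass. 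Your proposal never isolates or accounts for $K'$.

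The paper's argument is organized quite differently and avoids having to embed a singular disk from scratch. After passing to the decomposition $\mathrm{Inv}(N)=K\uplus K'$, one applies the coloured handle theorem (Theorem~\ref{teo:asasgen}) to obtain a regular isolating block $T$ for $K$ and an isolating block $B$ for $K'$ such that $N$ is obtained from $T\cup B$ by attaching coloured $1$-- and $2$--handles. Lemma~\ref{lem:identify} identifies $T$ as a disjoint union of solid tori along the components of $K$. The crucial step is homological, not surgical: because each core $c_i$ is nullhomologous in $N$, a bookkeeping argument on the $t$-curves of $T$ under handle attachment (Remark~\ref{rem:thom}, Lemmas~\ref{lem:tN0}, \ref{lem:curve}, \ref{lem:boundary}) forces some $t$-curve of $T$ to be parallel to $c_i$, and pushing it slightly gives a curve $\tau_i$ lying on $\partial N$, parallel to $c_i$ by an annulus. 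Now Dehn's lemma can be invoked \emph{for curves on the boundary}: the $\tau_i$ are contractible in $N$, hence bound disjoint properly embedded disks. An isotopy sliding $c_i$ across the annulus onto $\tau_i$ then carries these disks back to embedded disjoint disks spanning $\cup c_i$, contradicting nontriviality. Notice that contractibility and nullhomology play two distinct roles here: nullhomology forces the parallel $t$-curve, and contractibility feeds Dehn's lemma; the paper's concluding remarks even isolate the nullhomologous case as a separate (weaker) theorem. So the gap in your write-up is not merely that details are missing; the proposed desingularization mechanism is not available, and the actual route -- via the structure theorem $N\cong T\cup B\cup\text{handles}$ and the $t$-curve homology argument to get a boundary curve before applying Dehn's lemma -- is conceptually different.
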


Hence not only there exist full orbits in $N$ disjoint from $K$ (namely the orbit through $p$), but in fact these can be found passing arbitrarily close to $K$. An equivalent way of expressing the conclusion of the theorem is that at least one of the following holds:
\begin{itemize}
    \item There exists at least one trajectory disjoint from $K$, entirely contained in $N$, and whose $\alpha$- or $\omega$-limit is contained in $K$.
    \item There exist infinitely many trajectories disjoint from $K$ and entirely contained in any given neighbourhood $U$ of $K$ (hence in $N$).
\end{itemize}

Figure \ref{fig:basicexample} shows four examples of links $K$ in a compact $3$--manifold $N$ (shaded). In all of them $K$ is nontrivial inside $N$: $K$ is a trefoil knot in panels (a) and (d) and a Hopf link in panel (b); since these are nontrivial in $\mathbb{R}^3$ they are also nontrivial in the smaller set $N$. In panel (c) the knot $K$ is in fact trivial in $\mathbb{R}^3$ but not in $N$ because together with a meridian of the torus it becomes the Whitehead link, which is nontrivial in $\mathbb{R}^3$. In panels (a)-(c) $K$ is contractible in $N$; in panel (d) it is not. Hence in panels (a)-(c) the geometric conditions of the theorem are satisfied, so if $K$ is invariant for some flow satisfying the boundary conditions on $\partial N$, there must exist additional invariant structure inside $N$ and passing arbitrarily close to $K$. In panel (d) it is very easy to construct a flow having $K$ as a stable (local) attractor comprised of fixed points, with $N$ a positively invariant neighbourhood whose boundary is traversed by the flow inwards everywhere. Then there is no other invariant structure in $N$ besides $K$, showing that the theorem may fail if $K$ is not contractible in $N$.

\begin{figure}[h!]
\null\hfill
\subfigure[]{
\begin{pspicture}(0,0)(3.2,3.2)
	%\psgrid(0,0)(3.5,3.2)
	\rput[bl](0,0){\scalebox{0.6}{\includegraphics{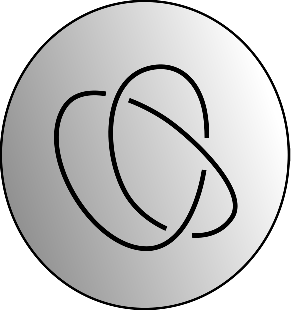}}}
    \rput[bl](2.5,0.1){$N$}
\end{pspicture}}
\hfill
\subfigure[]{
\begin{pspicture}(0,0)(3.2,3.2)
	%\psgrid(0,0)(3.5,3.2)
	\rput[bl](0,0){\scalebox{0.6}{\includegraphics{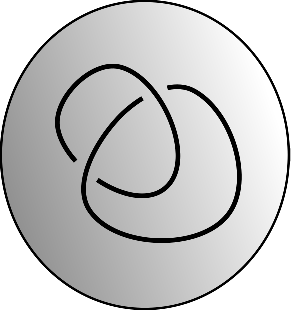}}}
    \rput[bl](2.5,0.1){$N$}
\end{pspicture}}
\hfill
\subfigure[]{
\begin{pspicture}(0,0)(6.2,2.5)
	%\psgrid(0,0)(6.2,2.5)
	\rput[bl](0,0){\scalebox{0.6}{\includegraphics{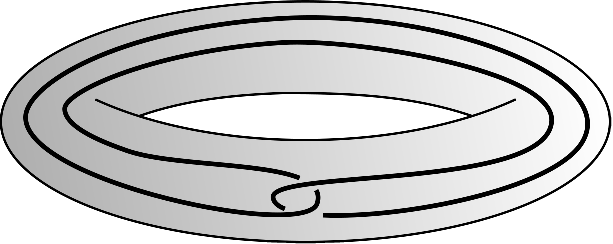}}}	
 \rput[bl](5.5,0.1){$N$}
\end{pspicture}}
\hfill
\subfigure[]{
\begin{pspicture}(0,0)(2.2,2.2)
	%\psgrid(0,0)(2.2,2.2)
	\rput[bl](0,0){\scalebox{0.6}{\includegraphics{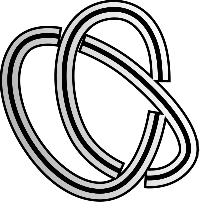}}}	
 \rput[bl](1.8,1.8){$N$}
\end{pspicture}}
\hfill\null
\caption{ \label{fig:basicexample}}
\end{figure}

We focus on Figure \ref{fig:basicexample}.(a) and consider a very simple flow that runs vertically downwards and has the points in $K$, and only them, as equilibria. Then $N^i$ and $N^o$ are the closed upper and lower hemispheres of $\partial N$, so the theorem applies and we expect additional invariant structure in $N$. And indeed, if we look down at the ball $N$ from its north pole we see a planar projection of $K$ with three crossings; each of these corresponds to two points in $K$ stacked vertically and then the straight line segment $\gamma_i$ they determine (minus its endpoints) is a full trajectory of the flow which is contained in $N$ and disjoint from $K$. Notice that in this example $K$ is an isolated invariant set and has a well defined Conley index, so one may attempt to locate the additional invariant structure in $N$ by comparing the Conley index of $K$ with the homotopy type of $(N/N^o,[N^o])$, which is trivial. However, in this case the index of $K$ is also trivial and so the trajectories $\gamma_i$ detected by the theorem are invisible to the Conley index. In fact, since an arbitrarily small perturbation of the flow will remove all invariant structure in $N$ (just make all trajectories run vertically downwards undisturbed), this will happen for any index that is invariant under small perturbations. Finally, we can modify this example to illustrate the role of the ``no interior tangencies'' condition placed on $N$. Remove from $N$ three small open balls $B_i$, each centered at a point lying midway of $\gamma_i$. The resulting perforated manifold has interior tangencies; any point in the equator of each $2$--sphere $\partial B_i$ consists entirely of such points. Obviously $K$ is still nullhomotopic (and knotted) in this perforated $N$, but now it is the only invariant structure in it.

Notice that the theorem makes no assumption on the dynamics on $K$ or the interior of $N$, and these can be very degenerate (far from hyperbolic, for instance). Also, the entry and exit pattern of the flow through $\partial N$ plays no role (unlike in Conley index theory).

\smallskip

B) We motivate the general form of the main theorem as follows. Finding a region $N$ with the neat structure described above might be difficult, so we now suppose that $N$ is a compact but otherwise arbitrary subset of $\mathbb{R}^3$ whose interior we know contains an invariant link $K$. As before, we are looking for additional invariant structure in $N$. One of the fundamental theorems of Conley index theory states that one of these holds: (i) either the trajectory of some $p$ in the frontier of $N$ is entirely contained in $N$ or (ii) there exists a (nonunique) compact pair $(Q,Q_0)$ in $N$ with certain dynamical properties (a so-called index pair) such that the interior of $\overline{Q-Q_0}$ contains all the invariant structure in $N$. Index pairs can be explicitly computed in favorable circumstances; for instance, for smooth flows. Now, if (i) holds we are finished: the trajectory of $p$ is disjoint from $K$ and entirely contained in $N$. The interesting situation is case (ii). Since $\overline{Q-Q_0}$ contains $K$ and any possible additional invariant structure in $N$, we may as well replace the initial region $N$ with this new, smaller region $\overline{Q - Q_0}$. Although this will not generally be a manifold and the behaviour of the flow on its frontier might be quite complicated, the theorem stated above remains essentially true. This is the content of our main theorem:

\begin{theorem} \label{thm:detect1} Let $(Q,Q_0)$ be an index pair in $\mathbb{R}^3$. Assume that $\overline{Q-Q_0}$ contains an invariant link $K$ which is contractible and nontrivial in its interior. Then every neighbourhood $U$ of $K$ contains a point $p \in U - K$ such that the full trajectory of $p$ is contained in $Q-Q_0$.
\end{theorem}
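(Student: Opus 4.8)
The plan is to argue by contradiction and to reduce the statement to a purely topological question about how a singular spanning disk of $K$ can meet a small isolating block around $K$. First I would negate the conclusion, so that some neighbourhood $U_0$ of $K$ contains no point $p\in U_0-K$ whose full trajectory lies in $Q-Q_0$. A full trajectory contained in $N:=\overline{Q-Q_0}$ is invariant structure inside $N$ and hence, by the properties of the index pair $(Q,Q_0)$, in fact lies in $Q-Q_0$; so after shrinking we obtain a compact neighbourhood $U\subseteq U_0$ of $K$ contained in $\mathrm{int}\,N$ whose maximal invariant subset is $K$. Thus $K$ is an isolated invariant set, and by the theory of isolating blocks it has an isolating block $B$ --- a compact orientable $3$--submanifold of $\mathrm{int}\,N$ with $K\subseteq\mathrm{int}\,B$, with maximal invariant subset $K$, and with $\partial B=b^{+}\cup b^{-}$ where the flow enters transversally along $b^{+}-\tau$, leaves transversally along $b^{-}-\tau$, and is externally tangent along the tangency $1$--manifold $\tau=b^{+}\cap b^{-}$ --- and $B$ may be taken inside any prescribed neighbourhood of $K$. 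It now suffices to contradict the hypothesis that $K$ is nontrivial in $N$.

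Next I would pin down the topological target. Since $K$ is contractible in $N$, fix singular disks in $N$ realizing this, a compact orientable $3$--submanifold $P\subseteq\mathrm{int}\,N$ whose interior contains $K$ together with those disks, and a thin tubular neighbourhood $V\subseteq\mathrm{int}\,B$ of $K$; then shrink $B$ so that $B\subseteq\mathrm{int}\,P$ and put $P':=\overline{P-V}$. If a longitude of $K$ on $\partial V$ were nullhomotopic in $P'$, then standard $3$--manifold topology (Dehn's lemma, the loop theorem) would produce an embedded disk in $P'$ bounded by that longitude, which, capped off by an annulus across $V$, would give an embedded disk in $P\subseteq N$ bounded by $K$ --- contradicting that $K$ is nontrivial in $N$. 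On the other hand, a singular spanning disk of $K$ in $P$ that is disjoint from $\mathrm{int}\,B$ away from a thin collar of its boundary restricts, off that collar, to a singular disk in $P'$ bounded by a longitude of $K$. Hence the whole problem reduces to the single statement: \emph{a singular spanning disk of $K$ in $P$ can be homotoped rel $K$ so that, outside a thin collar of its boundary, it is disjoint from $\mathrm{int}\,B$.} (For a link I would carry this out one component at a time, together with the usual innermost-curve surgeries to keep the resulting embedded disks disjoint.)

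The hard part --- and where the ``coloured'' handle theory developed in the paper is meant to do its work --- is precisely this last reduction. I would take a singular spanning disk $f\colon D\to P$ in general position with respect to $B$, $\partial B$, $\tau$ and $K$, and examine the subsurfaces of $D$ that $f$ carries into $\overline B$ with boundary curves on $b^{+}\cup b^{-}$; these are the only obstruction to pushing $f$ off $\mathrm{int}\,B$. The idea is to choose a handle decomposition of $B$ adapted both to the splitting $\partial B=b^{+}\cup b^{-}$ and to $f$, in which each handle carries a colour recording whether it is attached along the entrance part or the exit part of $\partial B$, with handle slides and cancellations governed by rules compatible with the direction of the flow. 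Exploiting that the maximal invariant subset of $B$ is just $K$ --- i.e. that no trajectory other than those inside $K$ remains in $B$ --- one should be able to show that this coloured calculus always simplifies: either the obstructing pieces of $f$ can be flowed and slid out of $B$ through $b^{+}$ or through $b^{-}$, so that $f$ becomes disjoint from $\mathrm{int}\,B$ away from a collar of $\partial D$ as required; or some maximal obstructing configuration persists, in which case following the colours along the flow exhibits a point of $P-K$ whose entire forward and backward trajectory stays in $Q-Q_0$ --- exactly the additional invariant structure whose absence we have assumed. Either alternative contradicts our standing assumptions, finishing the proof.

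I expect the genuine difficulty to be concentrated in this last step: setting up the coloured handle calculus on $B$ so that it respects the flow on $\partial B$, and, when the calculus does not terminate with a spanning disk pushed off $B$, extracting from a surviving coloured configuration an honest trapped trajectory rather than a merely homotopical obstruction. That a detour through the flow is unavoidable is clear, since topology alone cannot suffice: there are many knots and links that are contractible but nontrivial in a $3$--manifold. By contrast, the contradiction set-up, the passage to an isolating block, and the closing application of Dehn's lemma once $f$ has been pushed off $B$ are comparatively routine.
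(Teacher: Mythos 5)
Your outer scaffolding matches the paper's in spirit: you argue by contradiction, observe that the negated conclusion makes $K$ an isolated invariant set, pass to isolating blocks, and finish with Dehn's lemma and an innermost-curve argument. But there is a genuine gap, which you yourself flag: the entire weight of the argument rests on the assertion that a coloured handle calculus ``on $B$'' either pushes a singular spanning disk off $\mathrm{int}\,B$ or else exhibits a trapped trajectory, and nothing you say proves either horn of that dichotomy. Beyond being unproved, this is not what the paper's coloured handles do, and I doubt it can be made to work as stated. You only control the flow on $\partial B$, a small solid torus around $K$; your $P$ is an arbitrary submanifold with no dynamical constraint on $\partial P$, so the flow gives you no leverage on the part of the singular disk lying in $P-\mathrm{int}\,B$, which is exactly where the obstruction lives. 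A singular nullhomotopy of a longitude of $K$ inside $P-\mathrm{int}\,B$ would, by Dehn's lemma, already force $K$ to be trivial, so ``pushing the disk off $\mathrm{int}\,B$'' is equivalent to the conclusion you want, not a step towards it; and the alternative (extracting an honest trapped orbit from a surviving configuration of a singular map) has no mechanism behind it.

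The paper's argument is structurally different in a way that matters. It first builds a \emph{large} isolating block $N\subseteq\mathrm{int}\,\overline{Q-Q_0}$ big enough to contain $K$ and the image of the contracting homotopy (Proposition~\ref{prop:reduce}), so the flow is controlled on the boundary of the whole ambient region, not just on a small tube. It then does not decompose $B$ at all: the handle theorem (Theorem~\ref{teo:asasgen}) expresses $N$ as the result of attaching coloured $1$-- and $2$--handles \emph{onto} the union $T\cup B$ of a regular block $T$ for $K$ and a block $B$ for the complementary piece of $\mathrm{Inv}\,N$; the handles live in $N-(T\cup B)$. The colouring constraint then acts homologically (Lemmas~\ref{lem:tN0}, \ref{lem:curve}, \ref{lem:boundary}): because $c_i=0$ in $H_1(N)$, some $t$--curve of $T_i$ must represent $\pm c_i$ exactly, hence be parallel to the core, and this produces a link on $\partial N$ that is parallel to $K$ and contractible, to which Dehn's lemma applies directly. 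Nowhere is a singular spanning disk manipulated or a handle decomposition of $B$ used; the role of regularity of $T$ is just to identify $T$ as a union of solid tori with cores of the right knot type (Lemma~\ref{lem:identify}), and the link case is handled uniformly rather than component by component. So while your plan correctly locates where the difficulty is, it proposes to attack it with a tool that is not the one the paper builds and that, as sketched, does not close the gap.
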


(The simplified version above follows from this by setting $(Q,Q_0) = (N,N^o)$).

\smallskip

C) To prove Theorem \ref{thm:detect1} we develop some tools which are of independent interest. The argument is by contradiction and very roughly goes as follows. If the conclusion does not hold, then the maximal (biggest) invariant subset of $N$ is the disjoint union of $K$ and another compact invariant set $K'$ (in principle, possibly empty). We will show that $K$ and $K'$ have disjoint neighbourhoods $T$ and $B$ which are compact $3$--manifolds and have some special properties: (i) at every point in their boundaries the flow enters or exits the neighbourhood transversally or is externally tangent to it, (ii) $T$ is a union of disjoint solid tori along the components of $K$, (iii) $N$ can be obtained from $T \cup B$ by attaching handles onto it. It is the latter property that is key to our arguments. This manner of enlarging a manifold is well known from differential and piecewise linear topology; however, in our case the handle attachments will be heavily constrained by the dynamics. Imagine each point in the boundary of $T \cup B$ coloured gray or white depending on whether the trajectory through the point exits or enters $T \cup B$. The handles also have standard colourings and have to be pasted onto $T \cup B$ in such a manner that colours match. The proof of the theorem will then be translated to a geometric argument showing that when the link $K$ is nontrivial (and hence so is $T$) it is not possible to paste coloured handles onto $T \cup B$ to obtain $N$.

The paper is organized as follows. Section \ref{sec:prelim} contains some background definitions. Section \ref{sec:iblocks} introduces isolating blocks and regular isolating blocks. These are neighbourhoods of invariant sets that have particularly nice properties; for instance the neighbourhood $T$ mentioned above is a regular isolating block for $K$. Section \ref{sec:coloured} introduces coloured manifolds and describes the pasting of coloured handles. Section \ref{sec:asas} proves the coloured handle theorem; the key step (iii) in the argument outlined above. Section \ref{sec:concentric} shows how to recognize the topological type of regular isolating blocks. All this machinery is developed in general and then applied to prove Theorem \ref{thm:detect1} in Section \ref{sec:proof}. A very brief Section \ref{sec:concluding} contains some concluding remarks regarding the machinery of coloured handles and explains how the dynamical problem of finding additional invariant structure in a manifold $N$ as considered in A) is essentially equivalent to a purely topological problem of deciding whether a coloured manifold can be obtained from another one by pasting coloured handles onto it.

\section{Background definitions} \label{sec:prelim}

\subsection{} Throughout the paper we will take $\mathbb{R}^3$ as our phase space for simplicity but all proofs work exactly the same in any orientable boundariless $3$--manifold. Recall that a set $A \subseteq \mathbb{R}^3$ is called tame if there exists a homeomorphism of $\mathbb{R}^3$ that sends $A$ onto a polyhedron; i.e. (the polytope of) a finite simplicial complex in $\mathbb{R}^3$.

As a notational convention, the topological frontier and interior of a subset $A$ will be denoted by ${\rm fr}\ A$ and ${\rm int}\ A$ (lower case) respectively. A manifold will always mean a topological manifold possibly with boundary. The boundary of a manifold $A$ will be denoted by $\partial A$ and its interior by ${\rm Int}\ A$ (upper case).

We shall use \v{C}ech cohomology $\check{H}$ because it is better suited to compacta with a possibly complicated structure. The following continuity property is particularly useful in dynamics: if $K$ is the intersection of a nested sequence of compacta $Y_1 \supseteq Y_2 \supseteq \ldots$  then $\check{H}^*(K)$ is the direct limit of the direct sequence \[\check{H}^*(Y_1) \longrightarrow \check{H}^*(Y_2) \longrightarrow \ldots\] where all arrows are induced by inclusion (see \cite[Theorem 6, p. 318]{spanier1}). The following is also useful: $\check{H}^*(A,B) = \check{H}^*(A/B,[B])$ for any compact pair (a direct consequence of \cite[Lemma 11, p. 321]{spanier1}).

Only a very elementary knowledge of knot theory will be needed, with Rolfsen \cite{rolfsen1} being an appropriate reference.

\subsection{Isolating neighbourhoods} \label{subsec:inbd} The definitions (unfortunately not the notations) to come are all standard in Conley index theory and can be found for example in his monograph \cite{conley1}.

Let $N$ be a compact set. Its maximal (biggest) invariant subset is ${\rm Inv}(N) = \{p \in N : p \cdot \mathbb{R} \subseteq N\}$; we shall usually denote it by $K$. This is always closed in $N$, hence compact if nonempty. $N$ is called an isolating neighbourhood when the orbit of every $p \in {\rm fr}\ N$ leaves $N$ either in forward or backward time. This ensures that $K$ is contained in the interior of $N$. Changing perspectives, a compact invariant set $K$ is called isolated if it is the maximal invariant subset of some isolating neighbourhood $N$.

For an isolating neighbourhood $N$ we set \[N^{+} := \{p \in N : p \cdot [0,+\infty) \subseteq N\} \quad  \text{ and } \quad N^{-} := \{p \in N : p \cdot [0,+\infty) \subseteq N\};\] notice that for $p \in N^+$ we must have $\emptyset \neq \omega(p) \subseteq K$ and similarly for $p \in N^-$. It is clear that both are compact (if nonempty) and $N^{+} \cap N^{-} = K$. We also set $n^+ := N^+ \cap {\rm fr}\ N$ and $n^{-} := N^{-} \cap {\rm fr}\ N$.

Given a point $p \in N$ set \[t^o(p) := \sup\ \{t \geq 0: p \cdot [0,t] \subseteq N\}\] and \[t^i(p) := \inf\ \{t \leq 0 : p \cdot [t,0] \subseteq N\}.\] Thus $t^i(p)$ and $t^o(p)$ are the (signed) times that $p$ needs to exit $N$ for the first time in the past or in the future. Both are allowed to take the values $-\infty$ and $+\infty$, which happens if and only if $p \in N^{-}$ or $p \in N^{+}$, respectively. The maximal trajectory segment of $p$ in $N$ is defined as $p \cdot I$ where $I$ is the closed interval determined by $t^i(p)$ and $t^o(p)$ (possibly infinite); its endpoints are $p \cdot t^i(p)$ and $p \cdot t^o(p)$ when defined. The immediate entry and exit sets of $N$ are defined as \[N^i := \{p \in N : t^i(p) = 0\} \quad \text{ and } \quad N^o := \{p \in N : t^o(p) = 0\}\] so $p$ is an immediate exit point if, and only if, $p \cdot [0,\epsilon) \not\subseteq N$ for every $\epsilon > 0$, and similarly for immediate entry points. Clearly $N^i$ and $N^o$ are always subsets of ${\rm fr}\ N$. Finally, the projections onto $N^i$ and $N^o$ \[\pi^i : N - N^{-} \longrightarrow N^i \text{ and } \pi^o : N - N^{+} \longrightarrow N^o\] are defined by $\pi^i(p) := p \cdot t^i(p)$ and $\pi^o(p) := p \cdot t^o(p)$ respectively. Geometrically $\pi^i(p)$ is the point through which the trajectory of $p$ enters $N$, while $\pi^o(p)$ is the point through which the trajectory of $p$ leaves $N$.

\begin{remark} To aid visualization, let us say that a point $p \in {\rm fr}\ N$ is a ``clean'' exit if there exists $\epsilon>0$ such that $p \cdot (0,\epsilon) \cap N = \emptyset$. We claim that the set of clean exit points is dense in $N^o$. It is evidently contained in $N^o$. To show density pick $p \in N^o$ and consider $\{t \in (0,1) : p \cdot t \not\in N\} \subseteq \mathbb{R}$. This is an open set each of whose connected components is an open interval; moreover, $0$ belongs to its closure since $p \in N^o$. If there is a component of the form $(0,\epsilon)$ then $p$ is a clean exit point. If not, there exists a sequence of components $(t_n,t_n+\epsilon_n)$ with $t_n \rightarrow 0$. Each $p \cdot t_n$ is a clean exit point and they converge to $p$.
\end{remark}

\subsection{} In general the maps $t^i$, $t^o$, $\pi^i$ and $\pi^o$ need not be continuous. The following conditions are equivalent:
\begin{itemize}
	\item[(i)] $N^o$ is closed in $N$.
	\item[(ii)] The mapping $t^o : N \longrightarrow [0,+\infty]$ is continuous.
	\item[(iii)] $\pi^o$ is continuous.
\end{itemize}

Of course, analogous equivalences hold for $N^i$, $t^i$ and $\pi^i$. (ii) $\Rightarrow$ (iii) is trivial, (iii) $\Rightarrow$ (i) is a consequence of the fact that $N^o$ is precisely the fixed point set of $\pi^o$, and (i) $\Rightarrow$ (ii) dates back to Wa\.zewski \cite{wazewski1} (or see the proof of \cite[Theorem 2.3, pp. 24 and 25]{conley1}). We shall use these equivalences tacitly in the sequel. These conditions are related to how (maximal) trajectory segments in $N$ look like:

\begin{proposition} \label{prop:structure} Let $N$ be an isolating neighbourhood with a closed $N^o$. Then for every $p \in N$ there exists $t \in [t^i(p),t^o(p)]$ such that $p \cdot (t^i(p),t) \subseteq {\rm fr}\ N$ and $p \cdot (t,t^o(p)) \subseteq {\rm int}\ N$.
\end{proposition}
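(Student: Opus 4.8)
The plan is to single out the required time $t$ as the \emph{first instant at which the trajectory of $p$ penetrates} ${\rm int}\ N$, and then to show that once the trajectory is in ${\rm int}\ N$ it can no longer return to ${\rm fr}\ N$ before abandoning $N$. Concretely, I would set
\[
t^* := \inf\{\, s \in (t^i(p),t^o(p)) : p \cdot s \in {\rm int}\ N \,\},
\]
with the convention that $t^* := t^o(p)$ when this set is empty, and claim $t := t^*$ works. The easy half is the frontier part: it is elementary (from the very definitions of $t^i$, $t^o$ and the closedness of $N$) that $p\cdot s \in N$ for all $s \in [t^i(p),t^o(p)]$, so for $s < t^*$ the point $p\cdot s$ lies in $N$ but not in ${\rm int}\ N$, i.e. $p\cdot(t^i(p),t^*) \subseteq {\rm fr}\ N$. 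This also records $t^*\in[t^i(p),t^o(p)]$, so nothing further is needed here.

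The substance of the statement is the interior part, $p\cdot(t^*,t^o(p)) \subseteq {\rm int}\ N$, which I would establish by contradiction. If it fails, a short manipulation of the infimum defining $t^*$ yields times $s < t_1$ in $(t^i(p),t^o(p))$ with $p\cdot s \in {\rm int}\ N$ and $p\cdot t_1 \in {\rm fr}\ N$ (if $p\cdot t^*$ itself lies in ${\rm int}\ N$ one may take $s=t^*$; otherwise one extracts $s$ from a sequence $s_n \downarrow t^*$ with $p\cdot s_n \in {\rm int}\ N$). Writing $a := p\cdot s$ and $\delta := t_1 - s > 0$, we then have $a\in{\rm int}\ N$, $a\cdot\delta = p\cdot t_1 \in {\rm fr}\ N$, and $t^o(a) = t^o(p) - s > t_1 - s = \delta$ because $t_1 < t^o(p)$.

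Now comes the key step, and the only point at which the hypothesis is used: by the equivalences recalled just above the statement, $N^o$ closed means exactly that $t^o : N \to [0,+\infty]$ is continuous. Since $a\cdot\delta \in {\rm fr}\ N$, pick $b_k \to a\cdot\delta$ with $b_k \notin N$; flowing back for time $\delta$ and using continuity of the flow gives $b_k\cdot(-\delta)\to a\in{\rm int}\ N$, so for $k$ large $c_k := b_k\cdot(-\delta)$ lies in ${\rm int}\ N$ and in particular $t^o(c_k)$ is defined. Continuity of $t^o$ at $a$ forces $t^o(c_k)\to t^o(a) > \delta$, hence $t^o(c_k) > \delta$ for large $k$, whence $c_k\cdot[0,\delta]\subseteq N$ and in particular $c_k\cdot\delta\in N$; but $c_k\cdot\delta = b_k \notin N$, a contradiction.

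I expect the only mildly delicate point to be the extraction of the times $s < t_1$ from the definition of $t^*$, and the routine verification that the argument is unaffected in the degenerate situations (the maximal segment collapsing to a point, or $t^i(p)=-\infty$, or $t^o(p)=+\infty$, in which last case $t^o(a)=+\infty>\delta$ still); everything else is a direct consequence of the continuity of $t^o$ bought by the closedness of $N^o$.
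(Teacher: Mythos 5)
Your proof is correct and takes essentially the same approach as the paper: both hinge on perturbing a frontier point of the trajectory outward to a sequence $b_k \notin N$, flowing these back into the interior, and invoking continuity of $t^o$ (equivalent to closedness of $N^o$) to reach a contradiction. The paper packages this mechanism as a separate claim about backward-time approach forcing membership in $N^o$ and then analyzes the component structure of $\{s : p\cdot s \in {\rm int}\, N\}$, whereas you work with a single fixed time lapse $\delta$ and the infimum $t^*$ directly, but the key computation and the use of the hypothesis are the same.
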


Figure \ref{fig:structure} shows some examples. A trajectory segment in $N$ can travel initially along ${\rm fr}\ N$, then enter the interior of $N$ and remain there until it hits ${\rm fr}\ N$ again, at which point it actually hits $N^o$ and therefore exits $N$ (perhaps not cleanly, suggested by the dashed trajectory). One can have $t = t^i(p)$ or $t = t^o(p)$, so the segment might be entirely contained in ${\rm fr}\ N$ or entirely contained in ${\rm int}\ N$ (save for its endpoints). For example if $t^i(p) = -\infty$ (i.e. $p \in N^-$) then its distant past is close to $K$ and therefore contained in ${\rm int}\ N$, so we must have $t = t^i(p) = -\infty$ and $p \cdot (-\infty,t^o(p)) \subseteq {\rm int}\ N$.

\begin{figure}[h!]
\null\hfill
\subfigure[$-\infty < t^i < t^o < +\infty$]{
\begin{pspicture}(0,0)(4.2,2.2)
	%\psgrid(0,0)(3.2,2.2)
	\rput[bl](0.5,0){\scalebox{0.6}{\includegraphics{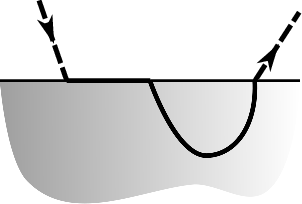}}}
\end{pspicture}}
\hfill
\subfigure[$-\infty < t^i < t^o = +\infty$]{
\begin{pspicture}(0,0)(4.2,2.2)
	%\psgrid(0,0)(3.2,2.2)
	\rput[bl](0.5,0){\scalebox{0.6}{\includegraphics{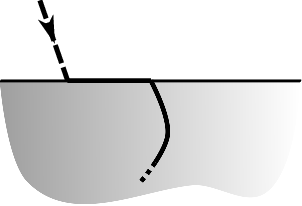}}}	
\end{pspicture}}
\hfill
\subfigure[$-\infty = t^i < t^o <+\infty$]{
\begin{pspicture}(0,0)(4.2,2.2)
	%\psgrid(0,0)(3.2,2.2)
	\rput[bl](0.5,0){\scalebox{0.6}{\includegraphics{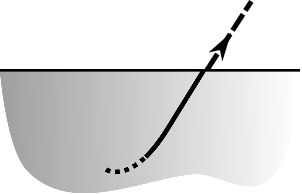}}}	
\end{pspicture}}
\hfill\null
\caption{ \label{fig:structure}}
\end{figure}

 \begin{proof}[Proof of Proposition \ref{prop:structure}] We first establish the following:

{\it Claim.} Let $p \in {\rm fr}\ N$ satisfy $p \cdot (-\epsilon,0] \cap {\rm int}\ N \neq \emptyset$ for every $\epsilon > 0$. Then $p \in N^o$.

{\it Proof.} By assumption there exists a sequence $-\nicefrac{1}{n} \leq t_n < 0$ such that $p \cdot t_n \in {\rm int}\ N$ for each $n$. Choose neighbourhoods $U_n$ of $p$ in phase space such that $U_n \cdot t_n \subseteq {\rm int}\ N$. Since $p \in {\rm fr}\ N$ there exists a sequence $p_n \rightarrow p$ such that $p_n \in U_n - N$ for each $n$. Then $p_n \cdot t_n \in {\rm int}\ N \subseteq N$ but $p_n \not\in N$ so $t^o(p_n \cdot t_n) \leq |t_n| \leq \nicefrac{1}{n}$. Then $t^o(p) = \lim t^o(p_n \cdot t_n) = 0$ by the continuity of $t^o$ at $p$. $_{\blacksquare}$
\smallskip

To prove the proposition let $p \in N$ and $U = \{s \in (t^i(p),t^o(p)) : p \cdot s \in {\rm int}\ N\}$. If $U = \emptyset$ then the proposition holds setting $t = t^o(p)$, so assume $U \neq \emptyset$. Clearly $U$ is open in $(t^i(p),t^o(p))$ and each of is connected components is an open interval whose endpoints do not belong to $U$. Let $J$ be one of these. We claim that its right endpoint $s$ must be $t^o(p)$. If not, $s$ is finite and there exists an increasing sequence $s_n$ in $J$ converging to $s$. By construction $p \cdot s_n \in {\rm int}\ N$ and $p \cdot s \in {\rm fr}\ N$ so by the claim above we have $p \cdot s \in N^o$. This implies $t^o(p) = s < t^o(p)$, a contradiction. Thus $J$ must be an interval of the form $(t,t^o(p))$; since this is true of all components of $U$ there can only be one of them. This concludes the proof.
\end{proof}

\begin{remark} \label{rem:structure} If both $N^i$ and $N^o$ are closed then the same argument for the reverse flow shows that only two possibilities can happen: either $p \cdot (t^i(p),t^o(p)) \subseteq {\rm fr}\ N$ or $p \cdot (t^i(p),t^o(p)) \subseteq {\rm int}\ N$.
\end{remark}

\subsection{} There exist several relations between the cohomology of some of the sets defined above. These are all based on the idea of using the flow to deform, possibly ``in infinite time'', one set onto another. Detailed proofs can be found elsewhere (\cite[\S 4]{churchill1} or  \cite[\S 2]{conleyeaston1}) so we just state the result and provide a very brief idea of the proof. 

\begin{proposition} \label{prop:coh} Let $N$ be an isolating neighbourhood for $K$. Assume that $t^i$ is continuous. Then the following isomorphisms hold:
\begin{itemize}
    \item[(i)] $\check{H}^*(N^+) = \check{H}^*(K)$, induced by the inclusion $K \subseteq N^+$.
    \item[(ii)] $\check{H}^*(N,N^i) = \check{H}^*(N^+,n^+)$.
    \item[(iii)] $\check{H}^*(N,K) = \check{H}^*(N^i,n^+)$.
\end{itemize}
\end{proposition}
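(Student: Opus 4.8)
The plan is to derive all three isomorphisms from one mechanism: the flow is used to slide one of the two sets onto the other along trajectories. This would be a genuine strong deformation retraction were it not for the trajectories that remain in $N$ for all forward time (the set $N^+$) or for all backward time (the set $N^-$); on those the slide would require ``infinite time'' and is honestly discontinuous. The remedy is to perform the slide only up to a finite time $T$, which yields bona fide homotopy equivalences of the truncated pairs, and then to let $T\to\infty$, at which point the \v{C}ech continuity property turns the resulting nested family of compacta into the stated isomorphism. I will carry out (i) in full and only sketch (ii) and (iii), whose complete proofs are in \cite[\S 4]{churchill1} and \cite[\S 2]{conleyeaston1}.

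For (i): the set $N^+$ is positively invariant --- if $p\cdot[0,\infty)\subseteq N$ then $(p\cdot t)\cdot[0,\infty)\subseteq N$ for every $t\geq 0$ --- and $K=\bigcap_{n\geq 1}(N^+\cdot n)$, because $q$ lies in every $N^+\cdot n$ precisely when $q\cdot(-n)\in N^+$ for all $n$, i.e.\ when $q\cdot\mathbb{R}\subseteq N$. This is a nested intersection of compacta, so by \v{C}ech continuity $\check H^*(K)=\varinjlim_n\check H^*(N^+\cdot n)$ with all bonds induced by inclusion. The flow homeomorphism $q\mapsto q\cdot(-n)$ carries $N^+\cdot n$ onto $N^+$ and $N^+\cdot(n+1)$ onto $N^+\cdot 1$, hence identifies each bond $\check H^*(N^+\cdot n)\to\check H^*(N^+\cdot(n+1))$ with the single homomorphism induced by the inclusion $j\colon N^+\cdot 1\hookrightarrow N^+$. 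Now $j$ is a homotopy equivalence, a homotopy inverse being the time-one map $p\mapsto p\cdot 1$: both composites are homotopic to the respective identities through the short-time flows $p\mapsto p\cdot\tau$, $\tau\in[0,1]$, which preserve $N^+$ and $N^+\cdot 1$ respectively. Therefore every bond is an isomorphism, $\varinjlim_n\check H^*(N^+\cdot n)=\check H^*(N^+)$, and unwinding the identifications shows that the resulting isomorphism $\check H^*(N^+)\cong\check H^*(K)$ is induced by the inclusion $K\hookrightarrow N^+$.

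For (ii) and (iii) the hypothesis that $t^i$ is continuous enters decisively: it is exactly what makes the backward slide continuous away from $N^-$. For (ii), given $T>0$ I would flow $N$ backward, carrying each $p\notin N^-$ along its trajectory toward its entry point $\pi^i(p)\in N^i$ but only for time $\min\{T,-t^i(p)\}$; continuity of $t^i$ makes this a well-defined continuous self-map $r_T$ of $N$ which restricts to the identity on $N^i$ and is homotopic to $\mathrm{id}_N$ rel $N^i$, through the corresponding partial backward flows. Its image $r_T(N)=N^i\cup\{\,p\cdot(-T):t^i(p)<-T\,\}$ decreases as $T$ grows, with $\bigcap_{T>0}r_T(N)=N^i\cup N^+$: the part of $r_T(N)$ lying off $N^i$ is contained in $\{q:q\cdot[0,T]\subseteq N\}$, a family decreasing to $N^+$, while $N^+$ itself lies in every $r_T(N)$. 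Passing to the limit --- \v{C}ech continuity for the nested pairs $(r_T(N),N^i)$, combined with a careful analysis of how the truncated deformations interact with those pairs, which is where essentially all the technical work resides --- yields $\check H^*(N,N^i)\cong\check H^*(N^i\cup N^+,N^i)$; and excision, valid for closed pairs in \v{C}ech cohomology, identifies the right-hand side with $\check H^*(N^+,N^i\cap N^+)=\check H^*(N^+,n^+)$, where $N^i\cap N^+=N^+\cap{\rm fr}\ N=n^+$ (using again that $N^i$ is closed). Statement (iii) runs along the same lines, now deforming the pair $(N,K)$ and keeping track of the subset $K\subseteq N^-$ in place of $N^i$.

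The main obstacle is thus precisely the infinite-time behaviour: ensuring that the truncated deformations really are homotopy equivalences of pairs, that the ``obvious'' subsets correspond under them, and that the passage to the \v{C}ech limit is legitimate. As this bookkeeping is carried out in detail in \cite{churchill1} and \cite{conleyeaston1}, I would not reproduce it here beyond the indications above.
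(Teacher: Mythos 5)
Your treatment of (i) is the paper's argument verbatim, and (ii) is also the paper's argument, only with the infinite-time deformation replaced by an explicit finite-time truncation followed by a \v{C}ech limit (the paper compresses this into a single ``deform in infinite time'' step, but the content is the same). For (iii), however, your one-line sketch would not close as stated. Deforming the pair $(N,K)$ backward produces $(N^i\cup N^+,\,K)$, and here the excision trick that finished (ii) no longer applies: $(N^i\cup N^+)/K$ is not $N^i/n^+$, because $N^+$ is still sitting there attached to $N^i$ along $n^+$ and pinched to a point only at $K$. One needs a further ingredient to kill the $N^+$ part, and that ingredient is precisely part (i): from $\check H^*(N^+)\cong\check H^*(K)$ and a long exact sequence one gets $\check H^*(N^+,K)=0$, whence the triple $(N^i\cup N^+,\,N^+,\,K)$ gives $\check H^*(N^i\cup N^+,K)\cong\check H^*(N^i\cup N^+,N^+)=\check H^*(N^i,n^+)$. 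The paper arranges the same ingredients slightly differently --- it deforms $(N,N^+)$ rather than $(N,K)$, and invokes (i) plus the triple $(N,N^+,K)$ to pass from $\check H^*(N,N^+)$ to $\check H^*(N,K)$ --- but in either ordering the use of (i) and of a triple's exact sequence is not a ``same lines'' repetition of (ii); it is an extra step your sketch does not mention, and without it the argument for (iii) does not go through.
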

\begin{proof} (i) Define the sets $Y_k := N^+ \cdot k$ for $k \geq 0$. These form a nested sequence of compact, positively invariant sets whose intersection is $K$. The map $f : p \longmapsto p \cdot 1$ restricts to a self-map of each $Y_k$ which is homotopic to the ${\rm id}_{Y_k}$ via the flow $(\varphi_t)_{t \in [0,1]}$. Each inclusion $i : Y_{k+1} \subseteq Y_k$ is then a homotopy equivalence, with the map $f$ providing a homotopy inverse. By the continuity property of \v{C}ech cohomology $\check{H}^*(K)$ is the direct limit of the sequence \[\check{H}^*(Y_1) \longrightarrow \check{H}^*(Y_2) \longrightarrow \ldots\] where the arrows are induced by inclusions. These are all isomorphisms, and so in the limit the inclusion $K \subseteq N^+$ also induces an isomorphism $\check{H}^*(N^+) = \check{H}^*(K)$. We shall abuse language and say that the flow produces a deformation ``in infinite time'' of $N^+$ onto $K$.

(ii) Start with $(N,N^i)$ and flow every point backwards until it first hits $N^i$; namely consider $N \times (-\infty,0] \longrightarrow N$ given by $(p,t) \longmapsto p \cdot \max \{t,t^i(p)\}$. This is where continuity of $t^i$ is used. The pair gets deformed in infinite time onto $(N^i \cup N^+, N^i)$ and so $\check{H}^*(N,N^i) = \check{H}^*(N^i \cup N^+,N^i)$. We then have \[\check{H}^*(N,N^i) = \check{H}^*(N^i \cup N^+,N^i) = \check{H}^*(N^+,n^+)\] because $(N^i \cup N^+)/N^i = N^+/n^+$.

(iii) Start with $(N,N^+)$ and again flow every point backwards until first it hits $N^i$. This deforms (again, ``in infinite time'') $(N,N^+)$ onto $(N^i \cup N^+, N^+)$. Thus \[\check{H}^*(N,N^+) = \check{H}^*(N^i \cup N^+, N^+) = \check{H}^*(N^i,n^+)\] Finally, using (i) and the long exact sequence for the triple $(N,N^+,K)$ we have that the inclusion $(N,K) \subseteq (N,N^+)$ induces isomorphism in cohomology and, together with the above, $\check{H}^*(N,K) = \check{H}^*(N^i,n^+)$.
\end{proof}

\section{Isolating blocks} \label{sec:iblocks}

Isolating blocks are a particularly nice class of isolating neighbourhoods. These were introduced in the smooth setting by Conley and Easton \cite{conleyeaston1} and our definition below is an almost direct translation of theirs to our topological setting. We warn the reader that the definition of an ``isolating block'' is not consistent across the literature, and in particular ours is more stringent than that used in \cite{churchill1} and \cite{gierzkiewicz1}.

Let $N \subseteq \mathbb{R}^3$ be a compact $3$--manifold and $p \in \partial N$. We shall say that $p$ is a transverse (i) entry or (ii) exit point if there exists an $\epsilon > 0$ such that either (i) $p \cdot (-\epsilon,0) \cap N = \emptyset$ and $p \cdot (0,\epsilon) \subseteq {\rm int}\ N$, or (ii) $p \cdot (-\epsilon,0) \subseteq {\rm int}\ N$ and $p \cdot (0,\epsilon) \cap N = \emptyset$. Similarly, $p$ is an exterior tangency if $p \cdot (-\epsilon,0) \cap N = \emptyset = p \cdot (0,\epsilon) \cap N$. See points $p_1$, $p_2$, $p_3$ in Figure \ref{fig:boundary} below.

\begin{definition} \label{defn:iblocks} An isolating block $N$ is a tame compact $3$--manifold $N \subseteq \mathbb{R}^3$ whose boundary $\partial N$ is the union of two compact $2$--manifolds $N^i$ and $N^o$ (one may be possibly empty) with common boundary $\partial N^i = \partial N^o = N^i \cap N^o$ and such that:
\begin{itemize}
	\item[(i)] every $p \in {\rm Int}\ N^i$ is a transverse entry point,
	\item[(ii)] every $p \in {\rm Int}\ N^o$ is a transverse exit point,
	\item[(iii)] every $p \in N^i \cap N^o$ is an exterior tangency.
\end{itemize}
\end{definition}

Figure \ref{fig:boundary} shows the setting conveyed by this definition. It depicts a portion of an isolating block, with $N^i$ painted white and $N^o$ painted dark grey, a convention that will hold throughout the paper.

\begin{figure}[h]
\begin{pspicture}(0,-0.2)(7.5,3)
	%\psgrid(0,-0.2)(7.5,3)
	\rput[bl](0,0){\scalebox{0.8}{\includegraphics{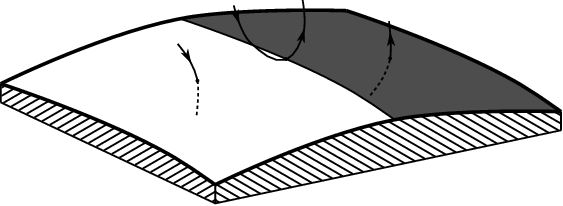}}}
	\rput[bl](2.2,1.5){$p_1$} \rput[bl](5.4,1.8){$p_2$} \rput[l](3.7,1.8){$p_3$}
	\rput(3.0,0.7){$N^i$} \rput(6.7,1.5){$N^o$} \rput(1,0.5){$N$}
\end{pspicture}
\caption{\label{fig:boundary}}
\end{figure}

Let $N$ be an isolating block and $K$ its maximal invariant subset. The condition that there are no interior tangencies to $\partial N$ implies that $K$ is contained in the interior of $N$, and so $N$ is an isolating neighbourhood; it is straightforward to check that its entry and exit sets as defined in Subsection \ref{subsec:inbd} are indeed $N^i$ and $N^o$. Since these are closed by definition, the maps $t^i$, $t^o$, $\pi^i$ and $\pi^o$ are continuous. We will prove later on that any isolated invariant set has a neighbourhood basis of isolating blocks.

In general there need not be any geometric relation between $K$ and $N$. This motivates the following definition: $N$ is a regular isolating block (for $K$) if the inclusion  $i : K \subseteq N$ induces isomorphisms $i^* : \check{H}^*(N;\mathbb{Z}) \longrightarrow \check{H}^*(K;\mathbb{Z})$ in \v{C}ech cohomology. This condition has already been considered in the literature (\cite{easton2}, \cite{gierzkiewicz1}, but recall the caveat about the terminology ``isolating block'') but does not seem to have been given any particular name. We call such an $N$ ``regular'' because of its similarities with a regular neighbourhood in piecewise linear topology (or a tubular neighbourhood in differential topology). This goes beyond a heuristic relation in view of Corollary \ref{cor:regular_tame}. An isolated invariant set $K$ with a finitely generated \v{C}ech cohomology has a neighbourhood basis of regular isolating blocks: this is \cite[Theorem 4.3, p. 321]{gierzkiewicz1}; see also p. \pageref{pg:ribs} for an outline of the proof.

\subsection{The tameness condition} \label{sub:tame} By definition an isolating block $N$ is required to be tame, so there exists a homeomorphism of $\mathbb{R}^3$ that sends $N$ onto a polyhedron. There is a local version of tameness: $N$ is locally tame at a point $p$ if there exist a closed neighbourhood $M$ of $p$ and an embedding $h : M \longrightarrow \mathbb{R}^3$ such that $h(M \cap N)$ is a polyhedron. A very deep theorem of Bing and Moise (see \cite[Theorem 4, p. 254]{moise2} and references therein)
establishes that $N$ is tame if and only if it is locally tame at each point. In our dynamical setting local tameness comes for free near transverse entry and exit points. To explain this we begin with the following purely dynamical result (recall that $\varphi$ is the flow):

\begin{proposition} \label{prop:loc_prod} Let $N$ be an isolating neighbourhood. Assume $O$ is an open subset of ${\rm fr}\ N$ consisting entirely of transverse entry points and let $p \in O$. There exist a neighbourhood $V \subseteq O$ of $p$ and $\delta > 0$ such that $\varphi : V \times (-\delta,\delta) \longrightarrow V \cdot (-\delta,\delta)$ is a homeomorphism and $U := V \cdot (-\delta,\delta)$ is an open neighbourhood of $p$ in $\mathbb{R}^3$ with $U \cap N = V \cdot [0,\delta)$.
\end{proposition}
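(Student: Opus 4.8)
The plan is to reduce the statement to the classical flow-box theorem for continuous flows and then exploit the hypothesis that \emph{every} frontier point near $p$ is a transverse entry point. To begin with, $p$ cannot be an equilibrium: since $p\in{\rm fr}\ N\subseteq N$, the relation $p\cdot t=p$ for all $t$ would contradict $p\cdot(-\epsilon,0)\cap N=\emptyset$. Hence, by the flow-box theorem for continuous flows (Whitney; see e.g. Nemytskii--Stepanov or Bhatia--Szeg\H{o}), there are an open neighbourhood $W$ of $p$ and a homeomorphism of $W$ onto $(-1,1)^3$, with coordinates $(t,x,y)$, carrying $p$ to the origin and conjugating $\varphi$ to the translation $(t_0,x_0,y_0)\cdot s=(t_0+s,x_0,y_0)$ (for trajectory segments contained in $W$). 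I work in these coordinates from now on.

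Next I would fix a thin closed slab $W'=[t_-,t_+]\times[-a,a]^2\subseteq W$ with good separation properties. Choose $t_-<0<t_+$ inside the interval on which $p\cdot(-\epsilon,0)\cap N=\emptyset$ and $p\cdot(0,\epsilon)\subseteq{\rm int}\ N$, so that the compact segment $p\cdot[t_-,t_+]$ meets ${\rm fr}\ N$ only at $p$. Since ${\rm fr}\ N\setminus O$ is closed in $\mathbb{R}^3$ and disjoint from this segment, a sufficiently thin slab has ${\rm fr}\ N\cap W'\subseteq O$; shrinking $a$ further, using continuity of the flow, I may also arrange $(t_-,x,y)\notin N$ and $(t_+,x,y)\in{\rm int}\ N$ for every $(x,y)\in[-a,a]^2$.

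The heart of the argument is to show that along each vertical segment $I_{x,y}=\{(t,x,y):t\in[t_-,t_+]\}$ the intersection $N\cap I_{x,y}$ is an ``up-set'', that is, it equals $\{t\ge f(x,y)\}$ for a single value $f(x,y)\in(t_-,t_+)$, and that ${\rm fr}\ N\cap I_{x,y}=\{(f(x,y),x,y)\}$. I would prove this by contradiction. If some segment $I_{x,y}$ were to leave $N$ at a certain height after having lain in $N$, then, taking $t_1$ to be the infimum of the heights above that point at which the segment is outside $N$, the point $(t_1,x,y)$ would belong to ${\rm fr}\ N\cap W'\subseteq O$ and its forward trajectory would leave $N$ instantly, contradicting that $(t_1,x,y)$ is a transverse entry point. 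Similarly, a point $(t,x,y)\in{\rm fr}\ N\cap W'$ with $t>f(x,y)$ would, being a transverse entry point, satisfy $(t,x,y)\cdot(-\epsilon,0)\cap N=\emptyset$, yet the up-set property forces the segment just below it to lie in $N$. This identifies ${\rm fr}\ N\cap W'$ with the graph of a function $f\colon[-a,a]^2\to(t_-,t_+)$, and $f$ is continuous because ${\rm fr}\ N$ is closed: a bounded sequence of points of the graph has all of its subsequential limits again on the graph, which forces $f(x_n,y_n)\to f(x_0,y_0)$. This up-set/graph claim is exactly where the hypothesis on $O$ is used, and I expect it to be the main obstacle; everything after it is bookkeeping.

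To conclude, set $V:={\rm fr}\ N\cap\big((t_-,t_+)\times(-a,a)^2\big)$, which coincides with the graph of $f$ over the open square; it is open in ${\rm fr}\ N$, it contains $p$ (since $f(0,0)=0$), and it is contained in $O$. Pick $\delta\in(0,\delta_0)$, where $\delta_0>0$ bounds $f$ away from $t_-$ and $t_+$ on the compact square. In coordinates, $\varphi$ restricted to $V\times(-\delta,\delta)$ is the map $((x,y),s)\mapsto(f(x,y)+s,x,y)$, which is a homeomorphism onto the open region $U$ lying between the graphs of $f-\delta$ and $f+\delta$, with continuous inverse $(\tau,x,y)\mapsto((x,y),\tau-f(x,y))$; and since every $(\tau,x,y)\in U$ lies on $I_{x,y}$, the up-set description gives $U\cap N=\{(\tau,x,y)\in U:\tau\ge f(x,y)\}=V\cdot[0,\delta)$. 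As $p\in U$ and $U$ is open in $\mathbb{R}^3$, this is precisely the assertion of the proposition.
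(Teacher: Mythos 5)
Your proof is built on the right geometric idea — straighten the flow near $p$, show that the nearby portion of ${\rm fr}\ N$ is the graph of a continuous ``crossing time'' function, and read off the product structure — and in this respect it parallels the paper's argument, which constructs a tube $U := W \cdot [0,2\epsilon]$ by flowing forward a small ball $W$ disjoint from $N$, proves that every maximal trajectory segment in $U$ meets ${\rm fr}\ N$ exactly once, and defines the continuous crossing-time map $\tau : U \to [-2\epsilon,2\epsilon]$, which is your $f$ in disguise. However, there is a genuine gap at the very first step. For \emph{continuous} flows the classical Whitney--Bebutov flow-box theorem (as stated in Nemytskii--Stepanov or Bhatia--Szeg\H{o}) only provides a local section $S$, a priori just a locally compact set, together with a homeomorphism $S \times (-\epsilon,\epsilon) \to W$ trivializing the flow; it does \emph{not} assert that $S$ is a $2$-disk, i.e.\ it does not yield a chart $W \cong (-1,1)^3$. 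Upgrading the section to a $2$-manifold in dimension $3$ is a nontrivial fact resting on the theory of generalized manifolds (Chewning--Owen), and this is precisely the result the paper deliberately invokes \emph{after} Proposition~\ref{prop:loc_prod} — indeed Proposition~\ref{prop:loc_prod} is part of the input used to derive it. So your proof, as written, assumes a strengthening of the flow-box theorem that the cited references do not supply and that the paper is careful to postpone.

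The good news is that the gap is repairable without changing the substance of your argument: nothing in your up-set/graph reasoning actually uses the coordinates $(x,y)$ on the section. If you replace the square $[-a,a]^2$ by an arbitrary compact neighbourhood $K$ of $p$ in a general local section $S$, the claim that each fibre $I_s = [t_-,t_+] \times \{s\}$ meets $N$ in a terminal interval $[f(s),t_+]$ and meets ${\rm fr}\ N$ only at its left endpoint goes through verbatim (the contradiction still comes from comparing ``transverse entry'' with ``forward exit'' in the trivialized coordinates), the closed-graph argument gives continuity of $f : K \to (t_-,t_+)$, and the final bookkeeping uses only that continuity. Proposition~\ref{prop:loc_prod} never claims that $V$ is a disk, so this weakened version suffices. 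At that point your proof and the paper's are essentially the same argument in different clothing, the paper simply avoiding any appeal to flow-box machinery.
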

\begin{proof} Since $p$ is a transverse entry point there exists $\epsilon > 0$ such that $p \cdot [-\epsilon,0) \cap N = \emptyset$ and $p \cdot (0,\epsilon] \subseteq {\rm int}\ N$. Any sufficiently small open neighbourhood $W$ of $p \cdot (-\epsilon)$ in phase space will satisfy (i) $W \cap N = \emptyset$, (ii) $W \cdot (2\epsilon) \subseteq {\rm int}\ N$, (iii) $W \cdot [0,2\epsilon] \cap {\rm fr}\ N \subseteq O$ and so consists only of transverse entry points. Let $U := W \cdot [0,2\epsilon]$ and let $\sigma$ be a maximal trajectory segment in $U$. By definition $\sigma$ must intersect a segment of the form $q \cdot [0,2\epsilon]$ with $q \in W$ and by maximality it must contain it. Then (i) and (ii) above ensure that $\sigma$ intersects ${\rm fr}\ N$ at least once; (iii) ensures that it intersects ${\rm fr}\ N$ only at transverse entry points and therefore it must intersect it just once. Consequently there is a well defined map $\tau : U \longrightarrow [-2\epsilon,2\epsilon]$ such that $q \cdot \tau(q)$ is the unique point of intersection of the trajectory segment containing $q$ with ${\rm fr}\  N$. It is easy to check that $\tau$ is continuous. Notice that $U$ contains $p$ and is open in phase space (it is the union of the open sets $W \cdot t$ for $t \in [0,2 \epsilon]$). Thus there exist an open neighbourhood $V$ of $p$ in ${\rm fr}\  N$ and $\delta > 0$ such that $V \cdot (-\delta,\delta) \subseteq U$. Notice that $q \in V \cdot (-\delta,\delta)$ if and only if $q \cdot \tau(q) \in V$ and $\tau(q) \in (-\delta,\delta)$. Since $\tau$ is continuous and $V$ is open, it follows that $V \cdot (-\delta,\delta)$ is open in $U$ hence also in phase space.
\end{proof}

Suppose that a certain $N$ is known to satisfy all conditions in the definition of an isolating block except perhaps for the tameness condition. By the theorem of Moise it suffices to check that $N$ is locally tame at each $p \in \partial N$ (it is obviously locally tame at each interior point). Suppose $p$ is not a tangency point; assume for instance that it is a transverse entry point. We apply Proposition \ref{prop:loc_prod} and observe that since we know that $\partial N$ is a surface, perhaps after reducing $V$ we may take it to be homeomorphic to $\mathbb{R}^2$. Then $U$ is an open neighbourhood of $p$ and \[(U,U \cap N) \cong V \times ((-\delta,\delta),[0,\delta)) \cong \mathbb{R}^2 \times (\mathbb{R},[0,+\infty)) = (\mathbb{R}^3, \mathbb{R}^2 \times [0,+\infty))\] where the first homeomorphism is provided by the flow and the second uses $V \cong \mathbb{R}^2$. Since $\mathbb{R}^2 \times [0,+\infty)$ is locally tame in $\mathbb{R}^3$, it follows that $N$ is locally tame at $p$. Obviously at a transverse exit point the same argument holds. The conclusion is that to show that $N$ is tame it suffices to check that it is locally tame at tangency points.

\subsection{Cylindrical coordinates} A height function $u$ for $N$ is a continuous mapping defined for every $p \in N$ except for those points in the boundary where the flow is externally tangent to $N$, and with the following properties:

\begin{itemize}
	\item[(i)] $u(p) \in [-1,1]$,
    \item[(ii)] $u(p) = 1 \Leftrightarrow p \in N^i$ and $u(p) = -1 \Leftrightarrow 
 p \in N^o$,
	\item[(iii)] $u|_K \equiv 0$, and
	\item[(iv)] $u$ is strictly decreasing along the trajectory segments of $N - K$.
\end{itemize}

Height functions make it possible to introduce ``cylindrical coordinates'' on $N$. Two coordinate patches $N - N^-$ and $N - N^+$ are needed, and coordinates are defined as \[N - N^{-} \ni p \longmapsto (\pi^i(p),u(p)) \in N^i \times [-1,1]\] in the first patch and analogously in the second, using $\pi^o$ instead. For tangency points the height is not well defined but in practice this will not be a problem because such a point coincides with its own projection $\pi^i(p)$ and so its first cylindrical coordinate determines the point completely anyway. 

We now show how to construct height functions. For each $p \in N$ let $\ell(p) := \nicefrac{1}{2} (-t^i(p) + t^o(p)) \in [0,+\infty]$. This is half the time-length of the trajectory segment of $p$ in $N$, so in particular it is constant over it. Write $N$ as the union of its closed subsets \[N^{\geq 0} := \{p \in N : -t^i(p) \leq \ell(p)\} \ \ \text{ and } \ \ N^{\leq 0} := \{p \in N : -t^i(p) \geq \ell(p)\}.\] These are the points in $N$ that are in the first or second half of their trajectory segment in $N$. Observe that $N^i \subseteq N^{\geq 0}$, $N^o \subseteq N^{\leq 0}$, and $K \subseteq N^{\geq 0} \cap N^{\leq 0}$.

Define a mapping $u^{\geq 0} : N^{\geq 0} \longrightarrow \mathbb{R}$ by \[u^{\geq 0}(p) := 1-\frac{\arctan (-t^i(p))}{\arctan \ell(p)}.\] This is not defined where $\ell = 0$, which is precisely on the tangency curves of $N$.

Consider a point $p \in N^i$, not on a tangency curve, and follow its trajectory in $N$. The map $-t^i$ starts from zero and increases strictly while $\ell$ remains constant, and so $u^{\geq 0}$ starts at $+1$ and decreases strictly. If $p$ does not belong to $N^+$ (so that it eventually exits $N$) then it will reach the half-time point of its trajectory. There $-t^i = \ell$ and so $u^{\geq 0} = 0$. If it belongs to $N^+$ then $-t^i(p)$ will go to $+\infty$ and $\ell(p) = +\infty$ as well, so in the limit $u^{\geq 0}$ will converge to $0$ too. Notice that $u^{\geq 0}|_K =0$ as well because both $-t^i$ and $\ell$ are infinite. Thus $u^{\geq 0}$ satisfies the required properties on $N^{\geq 0}$. Defining $u^{\leq 0}$ in an analogous manner on $N^{\leq 0}$ and observing that both match on the intersection (both are zero) we can take their union to obtain $u$.

\section{Coloured manifolds} \label{sec:coloured}

We abandon dynamics temporarily and introduce a topological abstraction of the dynamical information carried by the boundary of an isolating block:

\begin{definition} \label{defn:coloured} A coloured manifold $N \subseteq \mathbb{R}^3$ is a tame compact $3$--manifold whose boundary $\partial N$ is decomposed as the union of two compact $2$--manifolds $P$ and $Q$ (possibly empty) which meet precisely at their boundary: $P \cap Q = \partial P = \partial Q$.
\end{definition}

Clearly an isolating block is a coloured manifold with $N^i$ and $N^o$ playing the roles of $P$ and $Q$. Following the convention set out earlier for isolating blocks, we colour $P$ white and $Q$ dark gray in the drawings. Notice that in any coloured manifold the set $P \cap Q$, being the boundary of the compact $2$--manifold $P$ (and also of $Q$), is a disjoint union of simple closed curves. We call these the $t$--curves of the coloured manifold. We may orient $\partial N$ as the boundary of $N$ and then $P$ inherits this orientation; we then orient the $t$--curves as the boundary of $P$.

Recall from geometric topology that an elementary way of enlarging a $3$--manifold is by attaching handles onto it. The adaptation to our coloured context is described now.
\medskip

\underline{\itshape Attaching $1$--handles}. Let $\mathbb{D}^1$ be the unit interval $[-1,1]$ and $\mathbb{D}^2$ the closed unit disk in $\mathbb{R}^2$. A $1$--handle $H^{(1)}$ is, topologically, the cartesian product $\mathbb{D}^1 \times \mathbb{D}^2$. Its boundary contains the set $(\partial \mathbb{D}^1) \times \mathbb{D}^2$, which is a disjoint union of the two attaching disks $\{-1\} \times \mathbb{D}^2$ and $\{1\} \times \mathbb{D}^2$. The set $\mathbb{D}^1 \times \mathbb{D}^2$ is given a colouring as shown at the top of Figure \ref{fig:1-handle}.(a).

In order to attach $H^{(1)}$ onto a coloured manifold $N$ we proceed as follows. Select two $t$--curves $\tau_1$ and $\tau_2$ in $\partial N$; they do not need to be different. Then pick:
\begin{itemize}
	\item points $p_1 \in \tau_1$ and $p_2 \in \tau_2$ (if $\tau_1$ and $\tau_2$ happen to be the same curve, choose $p_1 \neq p_2$),
	\item two small disjoint closed disks $D_1, D_2 \subseteq \partial N$ centered at $p_1$ and $p_2$ that lay half in $P$ and half in $Q$, as shown in Figure \ref{fig:1-handle}.(a),
	\item two homeomorphisms $h_1 : \{-1\} \times \mathbb{D}^2 \longrightarrow D_1$ and $h_2 : \{1\} \times \mathbb{D}^2 \longrightarrow D_2$ which preserve the colouring.
\end{itemize}

Let $N' := N \cup_{h_1,h_2} (\mathbb{D}^1 \times \mathbb{D}^2)$ be the space obtained by attaching the $1$--handle $\mathbb{D}^1 \times \mathbb{D}^2$ onto $N$ via the homeomorphisms $h_1$ and $h_2$. That is, $N'$ is the disjoint union of $N$ and $\mathbb{D}^1 \times \mathbb{D}^2$ quotiented by the identifications \[\{-1\} \times \mathbb{D}^2 \ni (-1,x) \sim h_1(x) \in D_1 \ \ \text{ and } \ \ \{1\} \times \mathbb{D}^2 \ni (1,x) \sim h_2(x) \in D_2.\] This is a $3$--manifold. We shall identify the $1$--handle $H^{(1)}$ with the image of $\mathbb{D}^1 \times \mathbb{D}^2$ in $N'$ and also $N$ with its image in $N'$, so that we can write $N' = N \cup H^{(1)}$. The manifold $N'$ can be given an obvious colouring $\partial N' = P' \cup Q'$ as shown in Figure \ref{fig:1-handle}.(b). For instance, $Q'$ is obtained by removing from $Q$ the gray halves of the attaching disks and then adding the gray strip that runs along the $1$--handle.

The construction of $N'$ involves a number of choices. It is clear that, up to a colour preserving homeomorphism, moving the $p_i$ along the $\tau_i$ or changing the disks $D_i$ will have no effect on $N'$. Also, since any two orientation preserving homeomorphisms of a disk are isotopic, as long as $N'$ is orientable (which will always be our case since we work within $\mathbb{R}^3$) the coloured manifold $N'$ does not depend on the $h_i$ either. This is the ``coloured version'' of a standard result in geometric topology (see for instance \cite[Lemma 6.1, p. 75]{rourkesanderson1}). Thus $N'$ is uniquely determined (up to a colour preserving homeomorphism) just by stating onto which $t$--curves $\tau_1$ and $\tau_2$ the handle should be pasted.

\begin{figure}[h!]
\subfigure[The setup]{
\scalebox{0.9}{
\begin{pspicture}(0,-0.2)(6.5,6)
	%\psgrid(0,-0.2)(6.5,6)
	\rput[bl](0,0){\scalebox{0.5}{\includegraphics{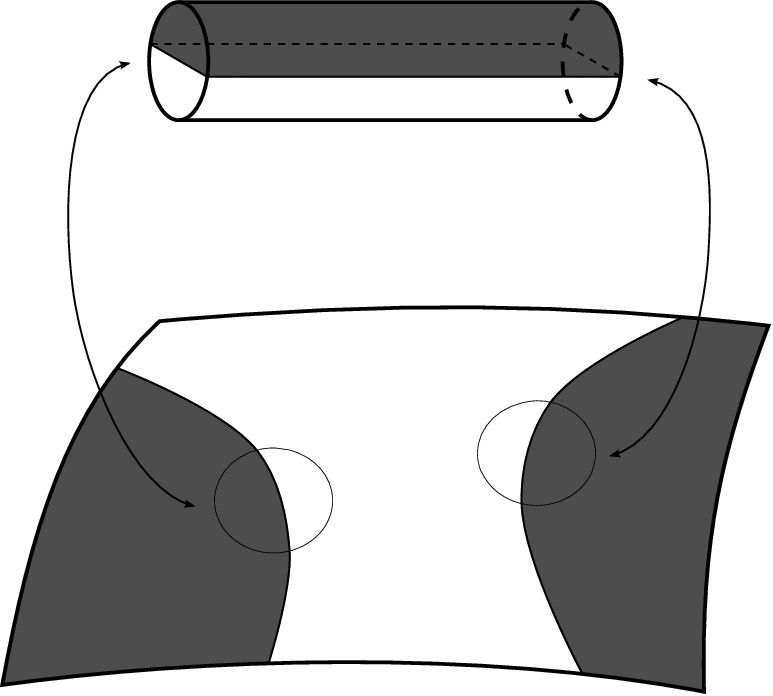}}}
	\rput[t](3.25,4.7){$\mathbb{D}^1 \times \mathbb{D}^2$}
	\rput[l](2.8,1.2){$D_1$} \rput[r](4.3,1.5){$D_2$}
	\rput[r](0.5,4){$h_1$} \rput[l](6.1,4){$h_2$}
	\rput[l](1.4,2.7){$\tau_1$} \rput(5,3){$\tau_2$}
	\rput(6.3,0){$N$}
\end{pspicture}}}
\quad
\subfigure[The handle attached]{
\scalebox{0.9}{\begin{pspicture}(0,-0.2)(6.5,5)
	%\psgrid(0,-0.2)(6.5,5)
	\rput[bl](0,0){\scalebox{0.5}{\includegraphics{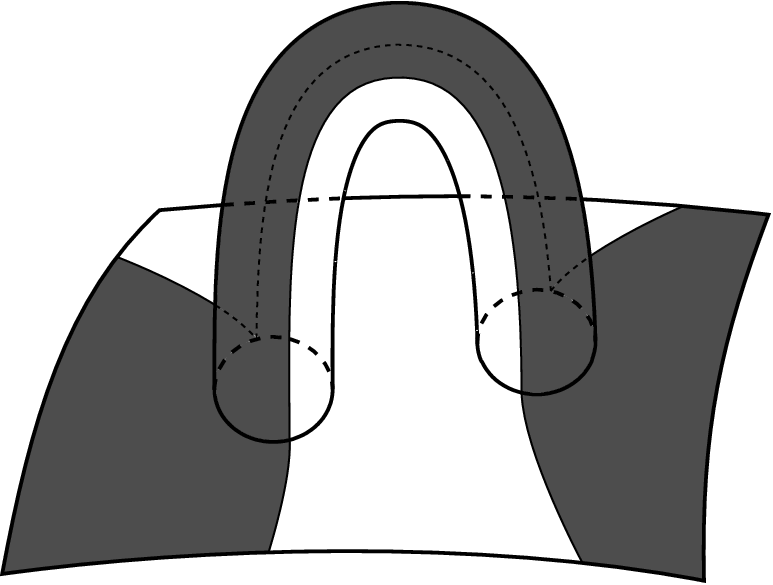}}}
	\rput[bl](4.5,4.5){$H^{(1)}$}
	\rput(6.3,0){$N'$}
\end{pspicture}}}
\caption{Attaching a $1$--handle \label{fig:1-handle}}
\end{figure}

When a $1$--handle is pasted onto two different $t$--curves ($\tau_1 \neq \tau_2$) the two coalesce and give rise to a single $t$--curve of the new coloured manifold $N'$. A $1$--handle can also be pasted onto a single $t$--curve ($\tau_1 = \tau_2$) of $N$, and then it may split it into two $t$--curves or produce a single one depending on whether the $1$--handle is ``twisted''. However, since we only consider an orientable $N'$ the latter case cannot happen for the following reason. Adding a $1$--handle onto $N$ has the effect of pasting a black ribbon onto $Q$ and changing its Euler characteristic to $\chi(Q') = \chi(Q)-1$. Since the Euler characteristic of an orientable surface has the same parity as its number of boundary components, adding a $1$--handle always changes the number of $t$--curves.

We record the following as a remark for later reference:

\begin{remark} \label{rem:thom} Suppose $N' := N \cup H^{(1)}$ is the result of attaching a $1$--handle onto $N$. If the handle joins two different components of $N$ we have $H_1(N') = H_1(N)$. If it is pasted onto a single component of $N$ then $H_1(N') = H_1(N) \oplus \langle h \rangle$ where $h$ is any simple closed curve going once along the handle and then closing up in $N$. Here $\langle h \rangle$ means the free Abelian group generated by $h$.

For a $t$--curve $\tau$ of $N$ we denote by $[\tau]$ its homology class in $H_1(N)$, and similarly for the $t$--curves of $N'$. We discuss the effect of pasting a $1$--handle on the homology classes represented by the $t$--curves. Orient all $t$--curves as the boundary of $P$. Then if the $1$--handle joins two different $t$--curves $\tau_1$ and $\tau_2$ of $N$ to produce a single $\tau'$ one has $[\tau'] = [\tau_1] + [\tau_2]$ because $\tau' - \tau_1 - \tau_2$ is the boundary of the white rectangle along the $1$--handle. If the $1$--handle is pasted onto a single $\tau$ it splits it into $\tau'_1$ and $\tau'_2$ and now $[\tau'_1] + [\tau'_2] = [\tau]$ in $H_1(N')$ for the same reason. Moreover, since $\tau'_1$ (say) runs just once along the handle we may take it to be the new generator $h$ of the homology of $N'$ and so the new $t$--curves are $[\tau'_1] = h$ and $[\tau'_2] = [\tau] - h$.
\end{remark}

\medskip

\underline{\itshape Attaching $2$--handles}. A $2$--handle $H^{(2)}$ is, topologically, the cartesian product $\mathbb{D}^2 \times \mathbb{D}^1$. It gets attached onto a manifold along an annulus, namely $(\partial \mathbb{D}^2) \times \mathbb{D}^1 \subseteq \partial H^{(2)}$. The set $\mathbb{D}^2 \times \mathbb{D}^1$ is given a colouring as shown at the top of Figure \ref{fig:2-handle}.(a).

To attach the $2$--handle onto a coloured manifold $N$ we select a $t$-- curve $\tau$ in $\partial N$ and then pick:
\begin{itemize}
	\item a thin closed annulus $A$ along $\tau$,
	\item a homeomorphism $h : (\partial \mathbb{D}^2) \times \mathbb{D}^1 \longrightarrow A$ which preserves the colouring.
\end{itemize}

Again we use the shorthand $N' = N \cup H^{(2)}$ to denote the attaching space $N' := N \cup_{h} (\mathbb{D}^2 \times \mathbb{D}^1)$ which receives the colouring shown in Figure \ref{fig:2-handle}.(b) (notice that the inner face of the $2$--handle is still painted white although this is not visible in the drawing). As in the case of $1$--handles the choice of $A$ and $h$ is irrelevant as far as the coloured manifold $N'$ is concerned; only the attaching curve $\tau$ is relevant.

\begin{figure}[h!]
\subfigure[The setup]{
\scalebox{0.9}{\begin{pspicture}(0,0)(6.5,6.2)
	%\psgrid(0,0)(6.5,6.2)
	\rput[bl](0,0){\scalebox{0.5}{\includegraphics{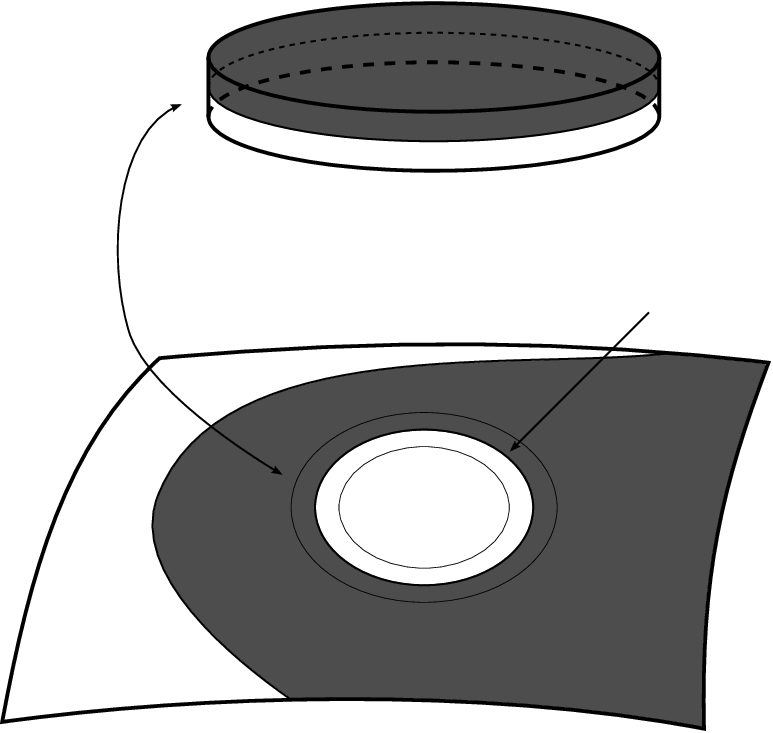}}}
	\rput[r](0.9,4){$h$} \rput(5.6,3.6){$\tau$}
	\rput[t](3.7,4.6){$\mathbb{D}^2 \times \mathbb{D}^1$}
	\rput(4.4,1.2){$A$}
	\rput(6.3,0){$N$}
\end{pspicture}}}
\quad
\subfigure[The handle attached]{
\scalebox{0.9}{\begin{pspicture}(0,0)(6.5,4.4)
	%\psgrid(0,0)(6.5,4.4)
	\rput[bl](0,0){\scalebox{0.5}{\includegraphics{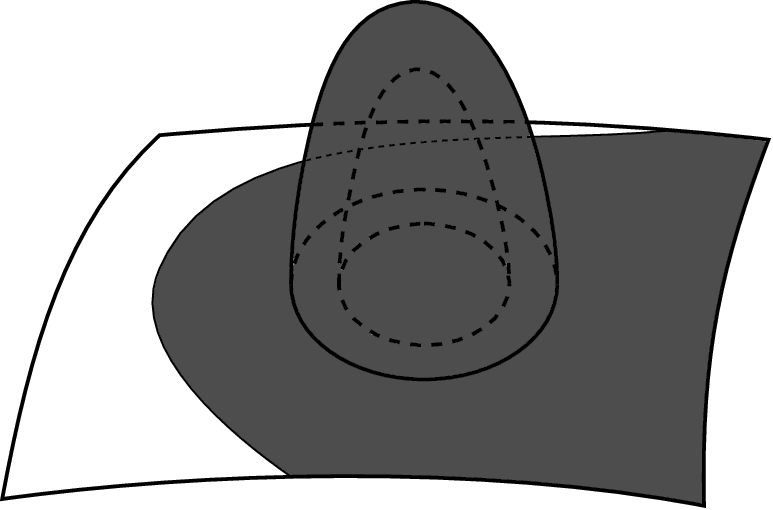}}}
	\rput[bl](4.2,4){$H^{(2)}$}
	\rput(6.3,0){$N'$}
\end{pspicture}}}
\caption{Attaching a $2$--handle \label{fig:2-handle}}
\end{figure}

\medskip

\section{A handle theorem} \label{sec:asas}

In this section we prove the following:

\begin{theorem} \label{teo:asas} Let $K$ be an isolated invariant set. Assume $\check{H}^k(K)$ is finitely generated for $k = 0,1$. Let $N$ be a connected isolating block for $K$. Then there exists a regular isolating block $B \subseteq {\rm Int}\ N$ for $K$ such that $N$ can be obtained by suitably attaching coloured $1$-- and $2$--handles (in that order) onto $B$.
\end{theorem}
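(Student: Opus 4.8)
\emph{Sketch of the intended proof.}

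\textbf{Producing $B$.} The plan is first to invoke the existence theorem for regular isolating blocks recalled in Section~\ref{sec:iblocks} (\cite{gierzkiewicz1}; see the outline on p.~\pageref{pg:ribs}): the hypothesis that $\check{H}^0(K)$ and $\check{H}^1(K)$ are finitely generated is exactly what that theorem requires, and it produces a regular isolating block $B$ for $K$ with $B\subseteq{\rm Int}\ N$. This $B$ will be the one in the statement, so from here on everything concerns only the topology of the pair $(N,B)$ together with the behaviour of the flow on it.

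\textbf{The region between the blocks.} Next I would study $W:=\overline{N-B}$. Since ${\rm Inv}(N)=K\subseteq{\rm Int}\ B$, the set $W$ carries no complete trajectory; it is a tame compact $3$--manifold (local tameness near $\partial N$ and $\partial B$ is automatic by Proposition~\ref{prop:loc_prod} and the discussion of Subsection~\ref{sub:tame}), along $\partial N$ the flow is transverse or externally tangent to $W$, along $\partial B$ it is transverse or \emph{internally} tangent, and by Proposition~\ref{prop:structure} and Remark~\ref{rem:structure} every maximal trajectory segment in $W$ lies entirely in ${\rm int}\ W$ or entirely in ${\rm fr}\ W$. Because $N$ is connected and $K$ lies in $B$, no component of $W$ misses $\partial B$. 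The goal then becomes: exhibit a handle decomposition of the cobordism $W$ rel $\partial B$ using only $1$-- and $2$--handles, coloured as in Section~\ref{sec:coloured}; this is precisely the assertion that $N$ is obtained from $B$ by attaching such handles.

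\textbf{Getting the decomposition.} To build it I would adapt the height-function construction of Section~\ref{sec:iblocks} to $W$ (here $K=\varnothing$, so its condition~(iii) is vacuous) and, using tameness to pass to a piecewise linear model, turn it into a Morse-type function whose generic level sets are compact surfaces with boundary on the $t$--curves of $N$ and $B$ and which is strictly monotone along trajectory segments. The key point is that monotonicity along the flow forbids interior local maxima and minima: at an interior point the value strictly decreases forward along the orbit, so that point is neither a local max nor a local min. Hence the handle decomposition has no $0$-- and no $3$--handles, only $1$-- and $2$--handles. Their colourings are then read off from the flow: near the $t$--curves the flow is locally an external tangency, so Proposition~\ref{prop:loc_prod} supplies the standard product model, and away from the $t$--curves the flow is transverse so the colour is locally constant; matching these local pictures against the model $1$-- and $2$--handles of Section~\ref{sec:coloured} shows the handles are coloured as required. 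A coloured version of the usual Morse rearrangement lemma then slides all $1$--handles below all $2$--handles, yielding the order stated in the theorem.

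\textbf{Where the difficulty lies.} I expect the hard part to be not the existence of \emph{some} handle decomposition but forcing it to be anchored at $\partial B$ while remaining compatible with the colourings: the immediate entry and exit surfaces of $W$ contribute ``floating'' collars that must be absorbed into $B$ by trivial $0$--handle/$1$--handle cancellations, and one must verify that all the local choices (disks, annuli, colour-preserving gluing homeomorphisms) can be made coherently along the whole of $\partial W$. This colour bookkeeping — rather than any single topological fact — is the technical core of the argument, and it is the reason the ``coloured handle'' machinery of Section~\ref{sec:coloured} is set up so carefully in advance.
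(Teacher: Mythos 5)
There is a genuine gap. Your plan is to \emph{first} choose some regular isolating block $B$ by citing \cite{gierzkiewicz1}, and \emph{then} to Morse-theoretically decompose the cobordism $W=\overline{N-B}$ using the dynamical height function $u$. The difficulty is that $u$ is not a cobordism function for $W$: it is constant ($\pm1$) on $N^i$ and $N^o$ but takes a whole range of values near $0$ on $\partial B$, so its level sets do not start at $\partial B$ and sweep out to $\partial N$. Worse, trajectory segments in $W$ can go from $N^i$ to $N^o$ (skipping $B$ entirely) and also from $B^o$ back into $B^i$ (leaving $B$ and re-entering it); there is no single function on $W$ that is both constant on each of $\partial B$ and $\partial N$ and strictly monotone along all such segments. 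So the clean ``no local max/min $\Rightarrow$ no $0$-- or $3$--handles'' argument has no function to apply to. A related obstruction: the exterior tangency curves of $B$ are \emph{interior} tangency curves of $W$, so $W$ is not itself an isolating block and the transversality structure you would want for a generic Morse-type sweep simply is not there on $\partial B$. Finally, it is not clear that an \emph{arbitrary} regular isolating block $B$ (as produced by \cite{gierzkiewicz1}, with a colouring unrelated to $N$'s) can be reached from $N$ by coloured handle removal; the theorem only asserts the existence of \emph{some} such $B$, and that $B$ has to be constructed compatibly with $N$.

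The paper's route avoids all of this by working in one dimension lower. Using cylindrical coordinates, every point of $N$ off $N^-$ is coded by the pair $(\pi^i(p),u(p))\in N^i\times[-1,1]$, so the whole cobordism problem is pushed down to the surface $N^i$ with the compactum $n^+$ inside it. The surface theorem (Theorem \ref{teo:reg_surf_1}) trims $N^i$ down to a subsurface $P\supseteq n^+$ with $\check H^*(P,n^+)=0$ by a finite sequence of disk removals and strip cuts. Propositions \ref{prop:disk} and \ref{prop:strip} show that each such $2$--dimensional operation on the entry surface ``thickens'' to a coloured $2$-- or $1$--handle removal from the $3$--manifold that produces a new isolating block (the convex-to-the-flow tapering), and the final block $B$ automatically satisfies $\check H^*(B,K)=\check H^*(B^i,b^+)=\check H^*(P,n^+)=0$, i.e.\ it is regular. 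Thus $B$ is \emph{constructed}, not chosen, and the handle decomposition (disks first, strips second, so when reversed $1$--handles before $2$--handles) comes for free from the order of the surface operations — no Morse function, no rearrangement lemma, and no separate bookkeeping of colourings, because the flow itself supplies the colouring of each attaching disk and annulus along a tangency curve. If you want to salvage your approach you would essentially need to re-derive this surface reduction, because the height function cannot serve as the Morse function you want.
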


The proof starts from a ``shaving and making convex to the flow'' technique which is a blend of ideas from \cite{churchill1} and \cite{conleyeaston1}. We first illustrate it with a simple example. Referring to Figure \ref{fig:sketch1}.(a), consider a flow in $\mathbb{R}^3$ which has a circle $K$ as an invariant set (comprised of fixed points) and trajectories otherwise flow vertically downwards. Figure \ref{fig:sketch1}.(b) shows a ball $N$ which is an isolating block for $K$. The entry and exit sets $N^i$ and $N^o$ are the upper and lower hemispheres of $\partial N$, respectively. Figure \ref{fig:sketch1}.(b) also shows $n^+$, which is a circle in the upper hemisphere.

\begin{figure}[h!]
\null\hfill
\subfigure[A flow in $\mathbb{R}^3$]{
\begin{pspicture}(0,0)(3.5,6)
	%\psgrid(0,0)(3.5,6)
	\rput[bl](0,0){\scalebox{0.9}{\includegraphics{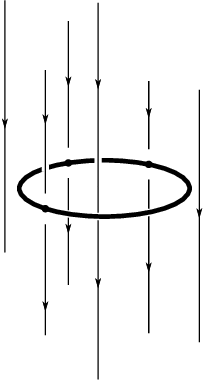}}}
	\rput[bl](2.5,2.2){$K$}
\end{pspicture}}
\hfill
\subfigure[An isolating block for $K$]{
\begin{pspicture}(0,0)(5,6)
	%\psgrid(0,0)(5,6)
	\rput[bl](0,0){\scalebox{0.9}{\includegraphics{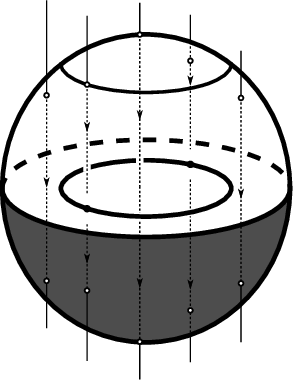}}}
	\rput[bl](4.6,2.2){$N$}
\end{pspicture}}
\hfill\null
\caption{ \label{fig:sketch1}}
\end{figure}

Now we choose a small closed disk $E \subseteq {\rm Int}\ N^i$ disjoint from $n^+$. This is shown in light gray in Figure \ref{fig:sketch2}.({\it a}\/). If we follow the trajectory of the disk under the flow it traces a solid cylinder inside $N$, disjoint from $K$. We remove this from $N$; see Figure \ref{fig:sketch2}.({\it b}\/). The resulting object $B_0$ is not an isolating block for $K$ because the lateral face of the cylindrical hole consists of points which are neither transverse entry or exit points or exterior tangency points. We easily fix this by tapering the cylinder to make it convex to the flow as in Figure \ref{fig:sketch2}.({\it c}\/). This produces an isolating block $B$. Looking at the portion of $\partial B$ contained in the now hourglass-shaped ``well'' going through the ball, the flow is externally tangent to $B$ at the ``waist'' and otherwise enters and exits $B$ transversally through the upper and lower halves of the well, respectively.

Our contribution to this construction is the following. Observe that $N$ can be recovered from $B$ by plugging a (very thick) $2$--handle back into the hourglass-shaped well we have drilled. The wall of the well is an annulus along the tangency curve at its waist, with its upper half is painted white (as part of the entry set) and the lower half grey (as part of the exit set). Thus, as a coloured manifold, $N$ is the result of pasting a coloured $2$--handle onto $B$. Finally, notice that $B$ is a solid torus and is a regular isolating block for $K$. We will prove that given any initial $N$, a regular isolating block can be constructed using this technique and an appropriate sequence of disk removals (as in the example) or cuts along strips (to be described later). The reverse of each of these is the attaching of a coloured $2$--handle or $1$--handle, respectively, and this is essentially the handle theorem.

\begin{figure}[h!]
\null\hfill
\subfigure[]{
\begin{pspicture}(0,0)(5,5)
	%\psgrid(0,0)(5,5)
	\rput[bl](0,0){\scalebox{0.9}{\includegraphics{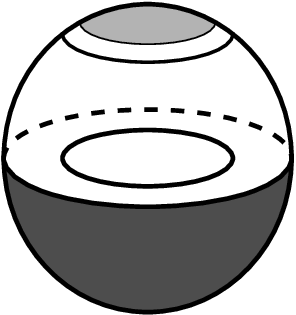}}}
	\rput[bl](2,4.3){$E$}
\end{pspicture}}
\hfill
\subfigure[]{
\begin{pspicture}(0,0)(5,5)
	%\psgrid(0,0)(5,5)
	\rput[bl](0,0){\scalebox{0.9}{\includegraphics{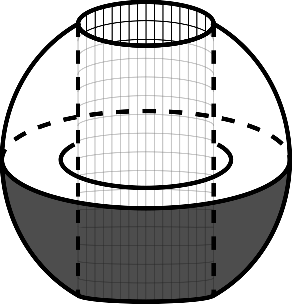}}}
 \rput[bl](4.3,0.8){$B_0$}
\end{pspicture}}
\hfill
\subfigure[]{
\begin{pspicture}(0,0)(5,5)
	%\psgrid(0,0)(5,5)
	\rput[bl](0,0){\scalebox{0.9}{\includegraphics{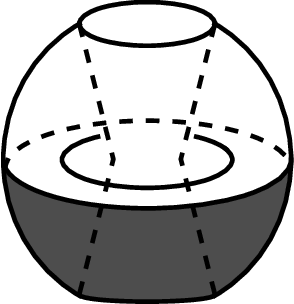}}}
	\rput[bl](4.3,0.8){$B$}
\end{pspicture}}
\hfill\null
\caption{ \label{fig:sketch2}}
\end{figure}

\subsection{} Let us discuss the construction of the example in general. We start with an arbitrary isolating block $N$ and consider a small disk $E \subseteq {\rm Int}\ N^i$ disjoint from $n^+$. Set \[C := \{p \in N : \pi^i(p) \in E\}\] and let $B_0$ be the (closure of the) remaining portion of $B$; namely \[B_0 := \{p \in N : \pi^i(p) \not\in {\rm Int}\ E \} \cup N^-.\] Both are compact sets whose union is $B$. $B_0$ is a neighbourhood of $K$ whereas $C$ is disjoint from $N^+ \cup N^-$. Notice also that $C$ is homeomorphic to $E \times [-1,1]$ via cylindrical coordinates. Identifying $E \times [-1,1] = \mathbb{D}^2 \times \mathbb{D}^1$, one has \[B_0 \cap C = \{p \in N : \pi^i(p) \in \partial E\} \cong (\partial \mathbb{D}^2) \times \mathbb{D}^1,\] so $N$ is (topologically) the result of pasting a $2$--handle onto $B_0$. We still need to make $B$ convex to the flow.

Let $E_1$ and $E_3$ be closed disks, slightly smaller and slightly bigger than $E$ respectively, so that $E_1 \subseteq E \subseteq E_3$ can be identified with three nested disks of radii $1,2,3$ in the $xy$ plane. Let $\alpha : E_3 \longrightarrow [0,3]$ be the radius function provided by this identification. 

Let \[B := B_0 \cup \{p \in C : \alpha(\pi^i(p)) \geq 1+|u(p)|\}\] and \[H^{(2)} := \{p \in C : \alpha(\pi^i(p)) \leq 1+|u(p)|\},\] so that evidently $N = B \cup H^{(2)}$. A rough picture is shown in Figure \ref{fig:convexflow}. Panel (b) shows $C$ ``straightened up'' so that the height function $u$ really accords to the height in the drawing. The disk $E$ appears as an interval, and $\alpha$ measures distance from the center of the interval.

\begin{figure}[h]
\null\hfill
\subfigure[]{
	\begin{pspicture}(0,0)(4,4)
	%\psgrid(0,0)(4,4)
	\rput[lb](0,0){\scalebox{0.5}{\includegraphics{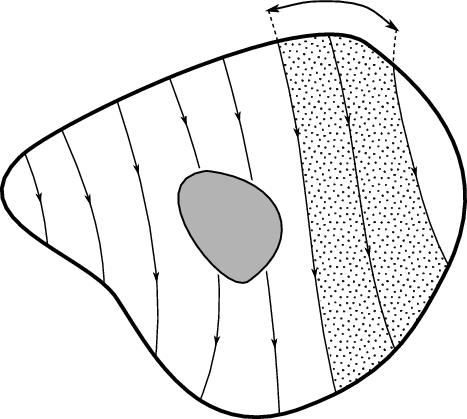}}}
	\rput(1.95,1.7){$K$} \rput(2.8,3.8){$E$} \rput(3.4,1.3){$C$} \rput[b](2,0.2){$B_0$}
	\end{pspicture}}
\hfill
\subfigure[]{
	\begin{pspicture}(0,0)(4,4)
	%\psgrid(0,0)(4,4)
	\rput[lb](0,0){\scalebox{0.5}{\includegraphics{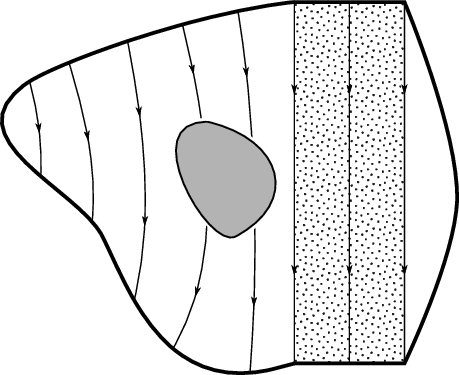}}}
	\rput(1.9,1.7){$K$} \rput(3.2,1.7){$C$} \rput[b](1.8,0.2){$B_0$}
	\end{pspicture}}
\hfill
\subfigure[]{
	\begin{pspicture}(0,0)(5,4)
	%\psgrid(0,0)(5,4)
	\rput[lb](0,0){\scalebox{0.5}{\includegraphics{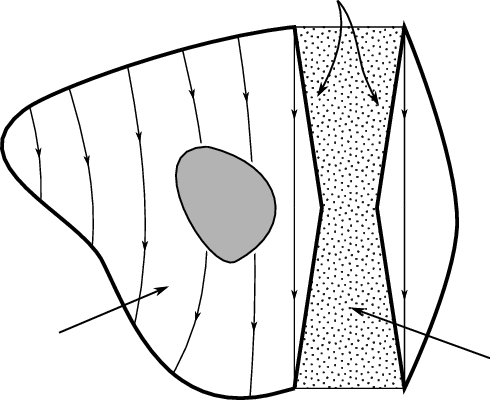}}}
	\rput(1.9,1.7){$K$} %\rput(3,2.9){$H^{(2)}$}
	\rput[b](2.9,3.4){$\alpha = 1+|u|$}
	\rput[bl](0.1,0.2){$B$} \rput[bl](4.3,0.2){$H^{(2)}$}
	\end{pspicture}}
\hfill\null
\caption{ \label{fig:convexflow}}
\end{figure}

To interpret $B$ geometrically let $p \in E$ move along a trajectory segment from top to bottom. The projection $\pi^i(p)$ and hence also $\alpha(\pi^i(p))$ remain constant, while the height $u(p)$ goes from $+1$ to $-1$. If $\alpha<1$ (i.e. the trajectory segment begins at a point in the interior of the smaller disk $E_1$) then the segment does not intersect $B$ because the condition $\alpha \geq 1 + |u|$ is never met. If $\alpha = 1$ the segment intersects $B$ exactly once at height $u = 0$, so it is externally tangent to $B$ at that point. If $1 < \alpha < 2$ the segment intersects $B$ in a whole interval between heights $\pm (\alpha-1)$, entering $B$ transversally at height $u = \alpha-1$ and exiting it transversally at height $u = -(\alpha-1)$. Transversality is ensured because $u$ is strictly decreasing. Finally if $\alpha = 2$ the segment is entirely contained in $B$.

\begin{proposition} \label{prop:disk} The following hold:
\begin{itemize}
    \item[(i)] $B$ is an isolating block for $K$.
    \item[(ii)] $B^+ = N^+$ and $b^+ = n^+$. Moreover, the pair $(B^i,b^+)$ is homeomorphic to $(N^i - {\rm int}\ E, n^+)$.
    \item[(iii)] $N$ can be recovered from $B$, as a coloured manifold, by attaching a $2$--handle onto it.
\end{itemize}
\end{proposition}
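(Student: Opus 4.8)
The plan is to carry out everything in the ``cylindrical coordinates'' on $C$. Since $E\subseteq{\rm Int}\ N^i$ is disjoint from $n^+$, the set $C$ is disjoint from $N^+\cup N^-$; hence $u$ is continuous and strictly decreasing along each trajectory segment meeting $E$, that segment runs all the way from $N^i$ to $N^o$, and cylindrical coordinates identify $C$ with $\mathbb{D}^2\times\mathbb{D}^1$ (the radius--$2$ disk times $[-1,1]$) so that $\pi^i$ is the first projection, $u$ the second, and $\alpha\circ\pi^i$ the radial function $|x|$. Under this identification $B\cap C=\{(x,u):|x|\ge1+|u|\}$, $H^{(2)}=\{(x,u):|x|\le1+|u|\}$, and the common wall $W:=B\cap H^{(2)}=\{(x,u):|x|=1+|u|\}$ is an annulus whose two boundary circles are $\partial E$ (at $u=1$) and the boundary of $E^o:=C\cap N^o$ (at $u=-1$), with ``waist'' circle $\gamma:=\{|x|=1,\,u=0\}$ (i.e.\ $\partial E_1$ at height $u=0$). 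I would first record the purely topological facts: $B$ is compact (a union of two closed subsets of $N$) and a $3$--manifold, being glued from the $3$--manifolds $B_0=\overline{N-C}$ and $B\cap C$ along the annulus $B_0\cap C=\partial E\times\mathbb{D}^1$; that $\partial B\cap\partial N=\partial N-{\rm int}(E\cup E^o)$; and that $\partial B=B^i\cup B^o$ with $B^i:=(N^i-{\rm int}\ E)\cup W^+$ and $B^o:=(N^o-{\rm int}\ E^o)\cup W^-$, where $W^\pm:=W\cap\{\pm u\ge0\}$; one checks directly that $B^i\cap B^o=\partial B^i=\partial B^o=\partial N^i\sqcup\gamma$.

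With this in place, (i) reduces to checking the three boundary conditions and tameness. The transversality checks use four local models. At a point of ${\rm Int}\ N^i-E$ or of $\partial N^i$ (and symmetrically for $N^o$) the sets $B$ and $N$ coincide in a neighbourhood of the point, so it inherits from $N$ the property of being a transverse entry point, resp.\ an exterior tangency. At a point $p$ of the seam circle $\partial E\subseteq{\rm Int}\ B^i$ the forward trajectory has $\pi^i\equiv p$, hence $|x|\equiv2$, with $u$ strictly decreasing from $1$; it therefore immediately enters ${\rm Int}\ B$, while the backward trajectory leaves $N\supseteq B$, so $p$ is a transverse entry point of $B$. At a point $p\in{\rm Int}\ W^+$, so $1<|x(p)|=1+u(p)<2$, membership in $B$ along the trajectory of $p$ (which lies in $C$) is exactly $\{|u|\le|x(p)|-1\}$; as $u$ is strictly decreasing and attains its maximum $|x(p)|-1$ at $p$, the forward trajectory enters ${\rm Int}\ B$ and the backward one exits, so $p$ is again a transverse entry point (symmetrically for $W^-$). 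Finally, along $\gamma$ one has $|x|=1$, $u=0$, and the trajectory meets $B$ only at that point, so $\gamma$ consists of exterior tangencies. For tameness I would invoke the criterion of Subsection~\ref{sub:tame}, reducing to the tangency curves: along $\partial N^i$ tameness is inherited from $N$, and along $\gamma$ --- which lies in ${\rm Int}\ C$ --- cylindrical coordinates furnish a tame chart, $B$ being modelled near $\gamma$ on the polyhedral wedge $\{|x|\ge1+|u|\}\subseteq\mathbb{R}^3$. This completes (i).

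Part (ii) is bookkeeping with the definitions. From $B\subseteq N$ we get $B^+\subseteq N^+$; conversely any $p\in N^+$ has $\pi^i(p)\notin E$ (else $p\in C$, contradicting $C\cap N^+=\emptyset$), so $p\in B_0\subseteq B$, and the same holds for the entire forward orbit of $p$, whence $N^+\subseteq B^+$; thus $B^+=N^+$. Since $n^+=N^+\cap N^i\subseteq{\rm Int}\ N^i$ and $n^+\cap E=\emptyset$, we have $n^+\subseteq{\rm Int}\ N^i-E\subseteq\partial B$, so $n^+\subseteq b^+$; conversely a point of $b^+=N^+\cap\partial B$ cannot lie in $B^o$ (it would exit) nor in $W^\pm\subseteq C$ (disjoint from $N^+$), hence lies in $N^i-{\rm int}\ E$, i.e.\ in $n^+$; so $b^+=n^+$. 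Finally, $B^i=(N^i-{\rm int}\ E)\cup_{\partial E}W^+$ is obtained from $N^i-{\rm int}\ E$ by gluing the annular collar $W^+$ along the boundary circle $\partial E$; absorbing this collar by a homeomorphism supported near $\partial E$ (hence fixing $n^+$, which is disjoint from $E$) yields $(B^i,b^+)\cong(N^i-{\rm int}\ E,n^+)$.

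Part (iii) then follows from the coordinate picture: $(x,u)\mapsto\bigl(x/(1+|u|),u\bigr)$ is a homeomorphism $H^{(2)}\to\mathbb{D}^2\times\mathbb{D}^1$ carrying $W$ onto $(\partial\mathbb{D}^2)\times\mathbb{D}^1$, and it is colour preserving --- the free faces $E\subseteq N^i$, $E^o\subseteq N^o$ and the halves $W^\pm$ match, respectively, the white and dark--gray pieces of the standard $2$--handle of Figure~\ref{fig:2-handle}.(a). Since $N=B\cup H^{(2)}$ with $B\cap H^{(2)}=W$ an annular neighbourhood in $\partial B$ of the $t$--curve $\gamma$, this displays $N$ as $B$ with a coloured $2$--handle attached along $\gamma$, and the colouring of $\partial N$ is recovered unchanged. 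The step I expect to be the real work is the transversality analysis along the wall $W$ in part (i) --- confirming that the seam circle $\partial E$ and the interior of $W$ consist of genuine transverse entry/exit points of $B$ rather than interior or non--transverse boundary points --- together with the fussy tameness check at $\gamma$; the rest is a faithful transcription of Figure~\ref{fig:convexflow}.
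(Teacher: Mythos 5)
Your proof is correct and follows essentially the same route as the paper: all the work is done in cylindrical coordinates, the transversality analysis along the wall and at the seam circle $\partial E$ is the same, and parts (ii) and (iii) are handled exactly as the paper does. The only point where the paper's exposition is cleaner is the verification that $B$ is a $3$--manifold: you glue $B_0=\overline{N-C}$ and $B\cap C$ along the annulus $\partial E\times\mathbb{D}^1$, which tacitly requires knowing that $B_0$ is itself a $3$--manifold (true, but it needs a word about $\mathrm{fr}_N C$ being a properly embedded annulus), whereas the paper sidesteps this by enlarging $C$ to the \emph{open} set $U=\{p\in N:\pi^i(p)\in E_3\}$ and writing $B$ as the union of its two open subsets $N-C$ and $B\cap U$, both manifestly $3$--manifolds.
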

\begin{proof} (i) Enlarge $C$ slightly to the open set $U := \{p \in N : \pi^i(p) \in E_3\}$. Via cylindrical coordinates, and recalling the definition of $\alpha$ as a radius, $U$ is homeomorphic to the cylinder $\{(x,y,z) \in \mathbb{R}^3 : \sqrt{x^2+y^2} < 3 , |z| \leq 1\}$. Via this homeomorphism $B \cap U$ goes to $\{(x,y,z) \in \mathbb{R}^3 : 3 > \sqrt{x^2+y^2} \geq 1 + |z|\}$. $B$ is the union of its open subsets $N - C$ and $B \cap U$. Since $N$ is a $3$--manifold by definition, so is its open subset $N - C$; $B \cap U$ is also easily seen to be a $3$--manifold with the above explicit description up to homeomorphism. Thus $B$ is a $3$--manifold whose boundary is the union of the boundaries of $N-C$ and $B \cap U$ which a quick check shows is \[\partial B = (\partial N \cap B_0) \cup \{p \in C : \alpha(\pi^i(p)) = 1 + |u(p)|\}.\] 

It follows from the geometric analysis before the proposition that:

(i) $\tau := \{p \in C : \alpha(\pi^i(p)) = 1 \text{ and } u(p) = 0\}$ is a tangency curve of $B$. The remaining tangency curves are those of $N$.

(ii) The entry set of $B$ is \[B^i = (N^i \cap B_0) \cup \{p \in C : \alpha(\pi^i(p)) = 1+|u(p)| \text { with } u(p) \geq 0\}\] and similarly the exit set is \[B^o = (N^o \cap B_0) \cup \{p \in C : \alpha(\pi^i(p)) = 1+|u(p)| \text{ with } u(p) \leq 0\};\] both the entry and exit happens transversally away from the tangency curves.

To finish proving that $B$ is an isolating block we need to show that it is locally tame at its tangency points as explained in Subsection \ref{sub:tame}. The only new tangency points are those in $\tau$. However, for those points we have a homeomorphism of $(U,B \cap U)$ with the explicit subset of $\mathbb{R}^3$ described at the beginning of the proof. In that model it is clear that $B$ is locally tame at points in $\tau$.

(ii) Let us focus on the set $A^{\geq 0} := \{p \in C : \alpha(\pi^i(p)) = 1 + |u(p)| \text{ with } u(p) \geq 0\}$ that appears in $B^i$. Each trajectory segment in $C$ intersects $A^{\geq 0}$ just once (for $1 \leq \alpha \leq 2$) or not at all (for $\alpha < 1$) so the projection $\pi^i$ establishes a homeomorphism from $A^{\geq 0}$ onto the annulus $2 \geq \alpha \geq 1$ and (since $\pi^i$ leaves points in $N^i$ unaltered) from  $B^i$ onto $N^i - \{\alpha < 1\}$, i.e. onto $N^i$ with the interior of small disk $E_1$. Notice also that $b^+ = n^+$ and $\pi^i$ leaves this set unaltered. Thus $(B^i,b^+)$ is homeomorphic to $(N^i - {\rm int}\ E_1, n^+)$. Clearly there is a homeomorphism of the latter pair onto $(N^i - {\rm int}\ E, n^+)$.

(iii) Defining $A^{\leq 0}$ in a manner completely analogous to $A^{\geq 0}$, we see that it is also homeomorphic via $\pi^i$ to the annulus $1 \leq \alpha \leq 2$. Since $H^{(2)}$ is entirely contained in $U$ we can use the homeomorphism from $U$ to a subset of $\mathbb{R}^3$ given above to check that $H^{(2)}$ is indeed a $2$--handle with the appropriate colouring. Finally, observe that $B \cap H^{(2)} = \{p \in C : \alpha(\pi^i(p)) = 1 + |u(p)|\}$. This is exactly $A^{\geq 0} \cup A^{\leq 0}$, so it is an annulus along the tangency curve $\tau$. Thus $N$ can be recovered from $B$ as a coloured manifold by attaching the coloured $2$--handle $H^{(2)}$ along the tangency curve $\tau$.
\end{proof}

\subsection{} In addition to removing disks from $N^i$ we will also need to cut it along strips. Let again $N$ be an isolating block and picture $n^+$ in the interior of the surface $N^i$. Suppose $\gamma \subseteq N^i$ is a simple arc which meets $\partial N^i$ (i.e. some tangency curve(s)) precisely at its endpoints. We pick a thin closed strip $E$ along $\gamma$, thin enough that it does not intersect $n^+$ either, and remove its interior from $N^i$ thereby cutting $N^i$ along the strip. See Figure \ref{fig:surf_remove}.

\begin{figure}[h]
\begin{pspicture}(0,0)(11,4.5)
%\psgrid(0,0)(11,4.5)
\rput[bl](0,0){\scalebox{0.75}{\includegraphics{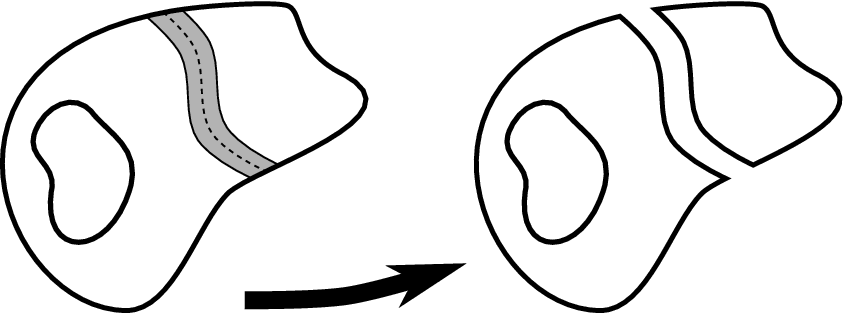}}}
\rput[bl](1,3.8){$N^i$} %\rput[bl](7,3.8){$P$}
\rput[bl](2.9,2.7){$E$} %\rput[bl](4,0.5){$R_1$}
\end{pspicture}
\caption{\label{fig:surf_remove}}
\end{figure}

The whole construction of the preceding section can be repeated to extend this to all of $N$. We define $C$ and $B_0$ in the exact same way. Now cylindrical coordinates do not directly provide a homeomorphism between $C$ and $E \times [-1,1]$ because of the degeneracy of the height function at $E \cap \partial N^i$ which is comprised of tangency points. To account for this we only need to collapse each $p \times [-1,1] \subseteq  E \times [-1,1]$ to a single point for those $p \in E \cap \partial N^i$. The resulting quotient is homeomorphic to $\mathbb{D}^1 \times \mathbb{D}^2$ and under this identification $B_0 \cap C \cong \{\pm 1\} \times \mathbb{D}^2$. (These two disjoint disks arise from the long sides of $E$). Hence, topologically $N = B_0 \cup C$ is the result of attaching a $1$--handle onto $B_0$. One can make $B_0$ convex to the flow to obtain an isolating block $B$ and then $N$ turns out to be $B$ with a coloured $1$--handle pasted onto it. As before, $(B^i,b^+) \cong (N^i - 
{\rm int}\ E,n^+)$. We omit further details and simply state the following:

\begin{proposition} \label{prop:strip} The following hold:
\begin{itemize}
    \item[(i)] $B$ is an isolating block for $K$.
    \item[(ii)] $B^+ = N^+$ and $b^+ = n^+$. Moreover, the pair $(B^i,b^+)$ is homeomorphic to $(N^i - {\rm int}\ E, n^+)$.
    \item[(iii)] $N$ can be recovered from $B$, as a coloured manifold, by attaching a $1$--handle onto it.
\end{itemize}
\end{proposition}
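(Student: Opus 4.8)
The plan is to run the construction and the verification exactly as for Proposition~\ref{prop:disk}, under the dictionary ``disk $E$'' $\rightsquigarrow$ ``strip $E$'', ``radius'' $\rightsquigarrow$ ``transverse distance to the arc $\gamma$'', ``$2$--handle'' $\rightsquigarrow$ ``$1$--handle''. Concretely, I would take strips $E_1 \subseteq E \subseteq E_3$ along $\gamma$ of transverse half-widths $1,2,3$ (so $E$ has half-width $2$), let $\alpha : E_3 \to [0,3]$ be the corresponding transverse-distance function, and set
\[
B := B_0 \cup \{p \in C : \alpha(\pi^i(p)) \geq 1 + |u(p)|\}, \qquad H^{(1)} := \{p \in C : \alpha(\pi^i(p)) \leq 1 + |u(p)|\},
\]
so that $N = B \cup H^{(1)}$. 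The ``follow a trajectory segment from top to bottom'' discussion preceding Proposition~\ref{prop:disk} transfers verbatim: over the interior of $\gamma$ the surface $\{p : \alpha(\pi^i(p)) = 1 + |u(p)|\}$ meets each vertical segment transversally for $1 < \alpha < 2$ (inwards at height $\alpha - 1$, outwards at $-(\alpha-1)$), in a single external tangency at height $0$ for $\alpha = 1$, and not at all for $\alpha < 1$.

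For part~(i) I would argue as in the proof of Proposition~\ref{prop:disk}: $B$ is a $3$--manifold since it is the union of the open $3$--manifolds $N - C$ and $B \cap U$, where $U := \{p \in N : \pi^i(p) \in E_3\}$. The one genuinely new feature is that $U$ is no longer a product $E_3 \times [-1,1]$: near the endpoints of $\gamma$ the strip $E_3$ runs into the tangency curves $\partial N^i$, where the height function degenerates. This is precisely the phenomenon addressed just before the statement --- collapsing the (single, degenerate) trajectory fibres over $E \cap \partial N^i$ identifies $C$ with $\mathbb{D}^1 \times \mathbb{D}^2$ (and $B_0 \cap C$ with $\{\pm 1\} \times \mathbb{D}^2$), and the same collapsing over $E_3 \cap \partial N^i$ gives an explicit model of $U$, hence of $B \cap U$ and of $H^{(1)}$, near the endpoints of $\gamma$. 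From this model one reads off that $\partial B = (\partial N \cap B_0) \cup \{p : \alpha(\pi^i(p)) = 1 + |u(p)|\}$, that $B^i$ and $B^o$ are the ``$u \geq 0$'' and ``$u \leq 0$'' parts of the new wall together with $N^i \cap B_0$ and $N^o \cap B_0$, that entry and exit off the new tangency locus are transverse, and (by Subsection~\ref{sub:tame}, where tameness only has to be checked at tangency points) that $B$ is locally tame: in the product chart away from the endpoints of $\gamma$, and in the collapsed model at the endpoints. Part~(ii) goes through word for word as in Proposition~\ref{prop:disk}(ii): $C$ is disjoint from $N^+ \cup N^-$, whence $B^+ = N^+$ and $b^+ = n^+$, and the projection $\pi^i$ --- a homeomorphism off the tangency set, fixing $N^i \cap B_0$ and leaving $n^+$ alone --- carries $B^i$ onto $N^i - {\rm int}\ E_1$, so that $(B^i, b^+) \cong (N^i - {\rm int}\ E_1, n^+) \cong (N^i - {\rm int}\ E, n^+)$. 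For part~(iii), in the collapsed model of $U$ one checks that $H^{(1)}$ is a $1$--handle carrying the standard colouring, that $B \cap H^{(1)} = \{p \in C : \alpha(\pi^i(p)) = 1 + |u(p)|\}$ is the union of its two attaching disks --- each split into a white ($u \geq 0$) half and a grey ($u \leq 0$) half by the new tangency locus, which runs along $\gamma$ --- and that the colourings of $B$ and $H^{(1)}$ agree along the gluing; hence $N = B \cup H^{(1)}$ displays $N$ as $B$ with a coloured $1$--handle attached.

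The hard part will be the local tameness of $B$ at the endpoints of the new tangency locus, where the freshly built structure abuts pre-existing tangency curves of $N$ about which we know only abstract local tameness (no normal form) and near which the flow has no product structure. I expect to handle this by choosing $\gamma$ and the strips $E_i$ so as to meet $\partial N^i$ in a suitably controlled, locally polyhedral way --- using the tameness of $N$ and of the surfaces $N^i$, $N^o$ --- and then verifying that the collapsed model of $U$ near an endpoint of $\gamma$ is homeomorphic to a standard polyhedral model in which the new tangency arc is visibly tame. Everything else is a routine transcription of the disk case.
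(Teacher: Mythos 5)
Your construction and verification are exactly what the paper intends: it explicitly defers to the disk case, introduces the fibre-collapsing over $E\cap\partial N^i$ to identify $C$ with $\mathbb{D}^1\times\mathbb{D}^2$ (and $B_0\cap C$ with $\{\pm1\}\times\mathbb{D}^2$), and then says ``We omit further details and simply state the following.'' The one point you treat more carefully than the paper --- local tameness of $B$ at the corners where the new tangency arcs ($\alpha=1$, $u=0$) meet the pre-existing curve $\partial N^i$, where the flow gives no product chart --- is a genuine subtlety that the paper glosses over, and your plan (choose $\gamma$ and the strips to meet $\partial N^i$ in a locally polyhedral normal form, then read off tameness from the collapsed model of $U$) is a reasonable way to close it.
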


\subsection{} The last ingredient to prove Theorem \ref{teo:asas} is the following topological result:

\begin{theorem} \label{teo:reg_surf_1} Let $L$ be a compact subset of the interior of a compact surface $S$. Assume $\check{H}^k(L)$ is finitely generated for $k = 0,1$. Then by suitably removing disks, cutting along strips, and finally discarding unnecesary components (in this order) it is possible to trim $S$ down to a compact surface $P$ that still contains $L$ in its interior and such that $L \subseteq P$ induces isomorphisms in \v{C}ech cohomology.

Letting $r_i = {\rm rk}\ \check{H}^{2-i}(S,L)$, at most $r_0$, $r_1$, and $r_2$ operations of each type are needed.
\end{theorem}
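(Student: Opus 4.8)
\emph{Sketch of a proposed proof.} The plan is to run three successive trimming phases, one for each cohomological degree in which the obstruction can live. Since $L \subseteq {\rm int}\ S$ we have $\check{H}^k(S,L) = 0$ for $k \geq 3$, and the finiteness hypotheses together with the long exact sequence of the pair show that $\check{H}^0(S,L)$, $\check{H}^1(S,L)$ and $\check{H}^2(S,L)$ are finitely generated free abelian groups of ranks $r_2$, $r_1$, $r_0$ respectively; here $r_2$ is simply the number of components of $S$ disjoint from $L$, while freeness in degree $2$ uses that $S$ is orientable — the only case one needs, since in the application $S \subseteq \partial N \subseteq \mathbb{R}^3$. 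The conclusion ``$L \subseteq P$ induces isomorphisms in \v{C}ech cohomology'' is just $\check{H}^*(P,L) = 0$, so it suffices to reach such a $P$. I would work one component of $S$ at a time (operations on different components are independent, and $\check{H}^*(S,L)$ splits as the corresponding direct sum): a component either misses $L$ (leave it for the last phase), or is a closed surface entirely contained in $L$ (leave it alone — the inclusion is already an isomorphism), or meets $L$ without filling it (the interesting case). In every case the recipe is: remove disks until $\check{H}^2(\cdot,L) = 0$, then cut strips until $\check{H}^1(\cdot,L) = 0$, then discard components until $\check{H}^0(\cdot,L) = 0$; the substance is that each elementary move can be chosen so as to drop the rank of the relevant group by exactly one without resurrecting the higher ones. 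The entire bookkeeping is driven by the long exact sequences of the triples $(\mathrm{old},\mathrm{new},L)$ combined with excision.

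\emph{Disk phase.} For disjoint disks $D_1,\dots,D_a$ in ${\rm int}\ S - L$ and $S_1 := S - \bigcup_i {\rm Int}\ D_i$, excision gives $\check{H}^k(S,S_1) \cong \bigoplus_i \check{H}^k(D_i,\partial D_i)$, which is $\mathbb{Z}^a$ for $k=2$ and $0$ otherwise. The triple $(S,S_1,L)$ then yields $\check{H}^0(S_1,L) \cong \check{H}^0(S,L)$, a short exact sequence $0 \to \check{H}^1(S,L) \to \check{H}^1(S_1,L) \to \ker\Phi \to 0$, and $\check{H}^2(S_1,L) \cong {\rm coker}\,\Phi$, where $\Phi : \mathbb{Z}^a \to \check{H}^2(S,L)$ sends the $i$-th generator to the class of $D_i$. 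Now $\check{H}^2(S,L) = \varinjlim_U \check{H}^2(S,U)$ over neighbourhoods $U$ of $L$, and via $\check{H}^2(S,U) \cong \check{H}^2(\overline{S-U},{\rm fr}(S-U))$ and Lefschetz duality each of these groups is free, generated by classes of disks lying in the complementary regions of $U$ whose closure is disjoint from $\partial S$. Hence $\check{H}^2(S,L)$ is generated by the classes of disks in the (finitely many, by finiteness of $\check{H}^1(L)$) bounded complementary regions of $L$; putting one disk in each makes $\Phi$ an isomorphism, so $\check{H}^2(S_1,L) = 0$ and $\check{H}^1(S_1,L) \cong \check{H}^1(S,L)$. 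Thus the disk phase kills $\check{H}^2(\cdot,L)$ with exactly $r_0$ removals and leaves the lower degrees of the pair unchanged — and incidentally it opens up any closed component, which is needed next.

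\emph{Strip phase.} Let $E \cong [0,1]\times[0,1]$ be a thin strip along a simple arc $\gamma$ in $S_1$ meeting $\partial S_1$ exactly in its endpoints and disjoint from $L$, and let $S_2 := S_1 - {\rm int}(E)$. Excision identifies $\check{H}^k(S_1,S_2)$ with the \v{C}ech cohomology of a disk modulo two disjoint boundary arcs, which is $\mathbb{Z}$ for $k=1$ and $0$ otherwise. The triple $(S_1,S_2,L)$ then gives $\check{H}^2(S_2,L) \cong \check{H}^2(S_1,L)$ (so cutting never revives degree $2$) and an exact sequence $0 \to \check{H}^0(S_1,L) \to \check{H}^0(S_2,L) \to \mathbb{Z} \to \check{H}^1(S_1,L) \to \check{H}^1(S_2,L) \to 0$. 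A suitably chosen cut then drops ${\rm rk}\,\check{H}^1(\cdot,L)$ by one, in one of two ways. If $\gamma$ is non-separating and its Poincar\'e--Lefschetz dual generates a $\mathbb{Z}$-summand of $\ker(\check{H}^1(S_1) \to \check{H}^1(L))$, the connecting map $\mathbb{Z} \to \check{H}^1(S_1,L)$ is injective, the rank drops, and $\check{H}^0(\cdot,L)$ stays trivial; such an arc disjoint from $L$ exists, by general position, whenever that kernel is nonzero. If instead $\gamma$ separates a component into two pieces each still meeting $L$, then again the rank of $\check{H}^1(\cdot,L)$ drops by one while $\check{H}^0(\cdot,L)$ remains trivial — this is the move that peels apart the components of $L$ one at a time, which is now feasible on every component precisely because the disk phase opened up the closed ones. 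Iterating, $\check{H}^1(\cdot,L)$ vanishes after at most $r_1$ cuts, and all along $\check{H}^2(\cdot,L) = 0$ and $\check{H}^0(\cdot,L) = 0$ on the components meeting $L$.

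\emph{Discard phase, and the main obstacle.} After the first two phases every component of the trimmed surface either meets $L$ — in which case $\check{H}^*(\cdot,L) = 0$ on it, i.e. the inclusion is already a \v{C}ech isomorphism — or is disjoint from $L$, and then it is a compact surface with $\check{H}^1 = \check{H}^2 = 0$, hence a disk. Discarding these disks uses at most $r_2$ operations (exactly the original number of components of $S$ that missed $L$, whose topology has by now been whittled to disks) and produces $P$; since every move was made away from $L$ and only components disjoint from $L$ were removed, $L$ stays in ${\rm int}\ P$. Tracking the maps (and using that $\check{H}^1$ of the surfaces and of $L$ is free) upgrades the rational rank count to genuine isomorphisms over $\mathbb{Z}$; alternatively one reads ${\rm rk}$ as minimal number of generators. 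The hard part, I expect, is the two existence statements flagged above: that $\check{H}^2(S,L)$ is generated by classes of disks sitting in bounded complementary regions of $L$, and that a given kernel class of $\check{H}^1(S_1) \to \check{H}^1(L)$ (or a suitable separating class) is always realized by an embedded arc disjoint from $L$. Both hinge on general position together with Poincar\'e--Lefschetz duality on $S$, and it is here that the finiteness of $\check{H}^0(L)$ and $\check{H}^1(L)$ enters, ensuring that only finitely many complementary regions and homology classes ever need to be dealt with.
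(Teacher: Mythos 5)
Your three-phase strategy (degree 2, then degree 1, then degree 0) is exactly the paper's, and your excision/long-exact-sequence-of-a-triple bookkeeping for how each move changes $\check{H}^*(\cdot,L)$ is equivalent to what the paper does via a Mayer--Vietoris sequence. But the two steps you flag as ``the hard part'' are genuinely the substance of the theorem, and your proposal does not supply them: you need (a) that $\check{H}^2(S,L)$ is generated by the classes of disjoint disks in $S-L$ such that killing them makes $\Phi$ onto, and (b) that nonzero classes in $\check{H}^1$ of the trimmed surface rel $L$ are realized by embedded arcs disjoint from $L$ meeting the boundary exactly at their endpoints. Over $\mathbb{Z}$ both of these carry hidden pitfalls: even if $\Phi$ has full rank its cokernel may be a nonzero finite group, so ``putting one disk in each region'' need not make $\check{H}^2(S_1,L)=0$; and a nonzero degree-1 class need not be primitive, so general position and Lefschetz duality do not hand you an embedded representative directly.

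The paper sidesteps both difficulties by a single change of viewpoint that your proposal half-reinvents in one degree. Working over $\mathbb{Z}_2$, it uses Alexander duality for surfaces with boundary, $\check{H}^i(S,L;\mathbb{Z}_2) \cong H_{2-i}(S-L,\partial S;\mathbb{Z}_2)$, turning the obstruction groups into ordinary simplicial homology of the open complement. Over $\mathbb{Z}_2$ every $0$-class of $H_0(S-L,\partial S)$ is represented by a point in a component of $S-L$ missing $\partial S$, and every nonzero $1$-cycle decomposes into pairwise disjoint simple polygonal loops and arcs (since mod $2$ every vertex has even degree), after which a short further argument replaces a loop by a homologous arc hitting $\partial S$ at its endpoints. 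This is the content of the paper's Claim in Step 2, and it is precisely the embedded-representative statement you appeal to but do not prove. Moreover, over $\mathbb{Z}_2$ every nonzero element generates a direct summand, so one disk removal or one strip cut really does drop the rank by exactly one, giving the $r_0, r_1, r_2$ bounds without worrying about torsion cokernels. Finally, your restriction to orientable $S$ (in order to invoke Lefschetz duality and freeness of $\check{H}^2$ over $\mathbb{Z}$) is harmless in the paper's application, where $S\subseteq\partial N$ is always orientable, but the theorem is stated for arbitrary compact surfaces and the $\mathbb{Z}_2$ route makes no such assumption. So: same skeleton, but you have left the two load-bearing existence lemmas unproved, and the paper's $\mathbb{Z}_2$ + Alexander duality setup is exactly what makes those lemmas easy.
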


The existence of such a $P$ is essentially well known (see for example the frame theorem of Moise \cite[Theorem 6, p. 72]{moise2}). However, we have not found any reference that includes the explicit information on how to construct $P$. Since this is essential to us, we provide a proof of the theorem in Appendix \ref{app:frame}. Now we prove the handle theorem:

\begin{proof}[Proof of Theorem \ref{teo:asas}] Since $N$ and $N^i$ are compact manifolds they have a finitely generated cohomology. The assumption on the cohomology of $K$ then implies that $n^+$ has a finitely generated cohomology in degrees $0$ and $1$ using the isomorphism $\check{H}^*(N^i,n^+) = \check{H}^k(N,K)$ provided by Proposition \ref{prop:coh}.(iii) and the long exact sequences of the pairs. Moreover $\check{H}^0(N^i,n^+) = \check{H}^0(N,K) = 0$ because $N$ is connected. Thus according to Theorem \ref{teo:reg_surf_1} we may perform a sequence of disks removals and cuts along strips to obtain a neighbourhood $P$ of $n^+$ such that $n^+ \subseteq P$ induces isomorphisms in \v{C}ech cohomology. Let us write $P_0 = N^i, P_1, \ldots, P$ for the sequence of surfaces this produces. This leads to a sequence of isolating blocks $N_0 := N, B_1, \ldots, B$ as follows. Consider for instance the first step in the sequence, going from $P_0 = N^i$ to $P_1$ via, say, a disk removal. Using the technique explained above and summarized in Proposition \ref{prop:disk} we may extend this to all of $N$, thus writing $N = B_1 \cup H^{(2)}$ where $B_1$ is a new isolating block with $(B_1^i,n_1^+) \cong (P_1,n^+)$. The next disk removal should be performed in $P_1$, but via this homeomorphism we may just as well perform it in $B_1^i$ instead. Thus again we may extend it to all of $B_1$, and so on. Inductively we end up with an isolating block $B$ with $(B^i,b^+) \cong (P,n^+)$. Then by Proposition \ref{prop:coh} we have \[\check{H}^*(B,K) = \check{H}^*(B^i,b^+) = \check{H}^*(P,n^+) = 0\] and so $B$ is a regular isolating block for $K$. Notice that at each step $B_i$ can be recovered from $B_{i+1}$ via a coloured handle attachment; first $2$--handles (corresponding to disk removals) and then $1$--handles (corresponding to cut-along-strip operations). Hence the initial $N$ can then be recovered from the final $B$ by attaching handles in the reverse order.
\end{proof}

We will need a slightly extended version of the handle theorem where only partial information about the cohomology of $K$ is known:

\begin{theorem} \label{teo:asasgen} Let $K$ be an isolated invariant set which is the union of two disjoint compacta $K_0$ and $K_1$ (these are then isolated invariant sets themselves). Assume $\check{H}^k(K_0)$ is finite dimensional for $k = 0,1$. Let $N$ be a connected isolating block for $K$. Then there exist a regular isolating block $B_0$ for $K_0$ and an isolating block $B_1$ for $K_1$, both disjoint and contained in ${\rm int}\ N$, such that $N$ can be obtained by suitably attaching coloured $1$-- and $2$--handles onto $B_0 \cup B_1$.
\end{theorem}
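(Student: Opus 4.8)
The plan is to run the argument of Theorem \ref{teo:asas} almost verbatim, but to shave $N$ only near $K_0$, leaving a neighbourhood of $K_1$ untouched so that the shaved block falls apart into a $K_0$--piece (to be made regular) and a $K_1$--piece. First I would record the dynamical preliminary that makes this possible. Because $K$ is isolated, no full orbit with $\alpha$--limit in $K_0$ and $\omega$--limit in $K_1$ (or vice versa) can stay inside $N$: such an orbit is disjoint from $K = K_0 \sqcup K_1$ yet, being contained in $N$, would lie in ${\rm Inv}(N) = K$. From this, a standard limit--orbit argument shows that $N^+$ is the disjoint union of the two compacta $N^+_j := \{p \in N^+ : \omega(p) \subseteq K_j\}$, $j = 0,1$, each mapped into itself by the flow, with $\bigcap_{k} \overline{N^+_j \cdot k} = K_j$. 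In particular $n^+ = n^+_0 \sqcup n^+_1$ is a disjoint union of compacta contained in ${\rm Int}\ N^i$, and the argument of Proposition \ref{prop:coh}.(i) applied to the nested sequence $N^+_0 \cdot k \downarrow K_0$ gives $\check{H}^k(N^+_0) \cong \check{H}^k(K_0)$, which is finite dimensional for $k = 0,1$; the cohomology of $N^+_1$ is irrelevant and may be infinite dimensional.

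Next I would carry out a relative version of the trimming of $N^i$. Fix a compact subsurface $W \subseteq {\rm Int}\ N^i$ with $n^+_1 \subseteq {\rm Int}\ W$ and $W \cap n^+_0 = \emptyset$ (possible since $n^+_0$ and $n^+_1$ are disjoint compacta in the surface $N^i$), and, to keep the pieces genuinely disjoint later, a slightly larger such subsurface $W'$ with $W \subseteq {\rm Int}\ W'$. Apply Theorem \ref{teo:reg_surf_1} to the compact surface $S := \overline{N^i - W'}$ with the compactum $L := n^+_0 \subseteq {\rm Int}\ S$: by disk removals, cuts along strips, and discards of components --- each of which takes place away from $W'$ and hence is at the same time a legitimate operation of that type performed on $N^i$ itself --- it trims $S$ down to a compact subsurface $P_0$ with $n^+_0 \subseteq {\rm Int}\ P_0$ inducing isomorphisms in \v{C}ech cohomology. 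Setting $P := P_0 \sqcup P_1$ with $P_1 := W'$, one obtains a subsurface of $N^i$ reachable from $N^i$ by the three kinds of surface operations (possibly after a few extra strip cuts along $\partial W'$ to separate $P_0$ from $P_1$ within $N^i$), with $n^+ = n^+_0 \sqcup n^+_1 \subseteq {\rm Int}\ P$ and $n^+_0 \hookrightarrow P_0$ a cohomology isomorphism.

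Finally I would run the machinery of Propositions \ref{prop:disk} and \ref{prop:strip} exactly as in the proof of Theorem \ref{teo:asas}: each disk removal or strip cut (and each discard, which amounts to excising a sub--$3$--manifold glued to the rest along an annulus, i.e.\ a coloured $2$--handle) is extended to all of $N$, turning $N$ into a sequence of isolating blocks for $K$ recovered from one another by attaching coloured $1$-- and $2$--handles, and ending at an isolating block $B$ for $K$ with $(B^i, b^+) \cong (P, n^+)$ and $b^+ = n^+$. Since $P = P_0 \sqcup P_1$ is disconnected with $n^+_0$ and $n^+_1$ in distinct pieces, $B$ is disconnected, $B = B_0 \sqcup B_1$; using $B^+ = N^+ = N^+_0 \sqcup N^+_1$ and $b^+_j = n^+_j$ one checks ${\rm Inv}(B_j) = K_j$, so each $B_j$ is an isolating block for $K_j$. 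For $j = 0$, Proposition \ref{prop:coh}.(iii) applied to the block $B_0$ gives $\check{H}^*(B_0, K_0) = \check{H}^*(B_0^i, b_0^+) = \check{H}^*(P_0, n^+_0) = 0$, so $B_0$ is regular. Reversing all the handle attachments recovers $N$ from $B_0 \cup B_1$ by attaching coloured $1$-- and $2$--handles, which is the assertion (note that, unlike in Theorem \ref{teo:asas}, the discards force some $2$--handles to be interleaved with the $1$--handles, which is why the statement no longer asks for them in a prescribed order).

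The main obstacle is the relative trimming step. One must be certain that Theorem \ref{teo:reg_surf_1} can be invoked on $\overline{N^i - W'}$ without its operations ever touching $W$ --- which holds by construction --- that the resulting $P_0$ can be made disjoint from $W'$ at the cost of only finitely many additional strip cuts, and that the interpretation of a surface ``component discard'' as a coloured handle attachment on the $3$--manifold side (and the bookkeeping of which $t$--curves these handles meet) is carried out carefully enough that ${\rm Inv}(B_j) = K_j$ really comes out as claimed. The only other place where genuine work is hidden is the dynamical decomposition $n^+ = n^+_0 \sqcup n^+_1$ into compacta, where the isolation of $K$ is used in an essential way.
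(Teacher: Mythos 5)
Your overall blueprint matches the paper's: decompose $N^+ = N^+_0 \sqcup N^+_1$ (hence $n^+ = n^+_0 \sqcup n^+_1$), show $n^+_0$ has finitely generated cohomology, trim $N^i$ while protecting a surface neighbourhood of $n^+_1$, extend the trimming to $N$ via Propositions~\ref{prop:disk} and~\ref{prop:strip}, and finally split the resulting block $B = B_0 \sqcup B_1$ and verify $B_0$ is regular. The dynamical preliminaries and the final splitting/verification step are essentially as in the paper (the paper shows each component $C$ of $B$ meets at most one $K_i$ by deforming the connected $C$ onto $C^i \cup C^+$ and observing this set decomposes as a disjoint union; your sketch is consistent with that).

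However, the relative trimming step as you set it up has a genuine gap. You apply Theorem~\ref{teo:reg_surf_1} to the truncated surface $S := \overline{N^i - W'}$ with $L := n^+_0$, and assert that the resulting operations, since they ``take place away from $W'$'', are \emph{ipso facto} legitimate operations on $N^i$. That is not so. Disk removals are fine (they happen in ${\rm Int}\ S \subseteq {\rm Int}\ N^i$ and avoid $n^+$), but a strip cut produced by Theorem~\ref{teo:reg_surf_1} goes along an arc that meets $\partial S$ precisely at its endpoints, and $\partial S = \partial N^i \cup \partial W'$. Nothing in that theorem prevents the arc from terminating on $\partial W'$, which is a set of curves in the \emph{interior} of $N^i$, not tangency curves. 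Such a ``strip'' is not a strip in the sense required by Proposition~\ref{prop:strip} (whose strips must run from $\partial N^i$ to $\partial N^i$), so it neither corresponds to a coloured $1$--handle on the $3$--manifold side nor even produces a well-defined isolating block at the next stage. The same objection applies to discards: a component of $S$ need not be a component of $N^i$. Finally, the ``extra strip cuts along $\partial W'$'' you invoke to make $P_0$ disjoint from $W'$ are cuts along closed curves in ${\rm Int}\ N^i$, which again are not among the admissible operations.

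The paper sidesteps all of this with a small but decisive change of setup: instead of excising $W'$ from the surface, keep $S = N^i$ and enlarge the compactum to $L := n^+_0 \cup L_1$, where $L_1 \subseteq {\rm Int}\ N^i$ is a compact $2$--manifold neighbourhood of $n^+_1$ disjoint from $n^+_0$. Because $L_1$ is a compact surface, $\check{H}^*(L)$ is finitely generated even though $\check{H}^*(n^+_1)$ may not be, so Theorem~\ref{teo:reg_surf_1} applies to $(N^i, L)$ directly, and \emph{every} operation it produces is automatically a legitimate disk removal or strip cut on $N^i$. (No discards arise: since $N$ is connected and $n^+ \subseteq L$, every component of $N^i$ meets $L$, so $r_2 = 0$.) The cohomology isomorphism $\check{H}^0(P) \cong \check{H}^0(L)$ then forces no component of $P$ to meet both $n^+_0$ and $L_1 \supseteq n^+_1$, which is exactly what you need to split $B$. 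I would revise the trimming step to this form; the rest of your argument then goes through.
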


\begin{proof} We first show that $N^+$ decomposes as a disjoint union of two compact sets $N^+_i$, each each characterized because its points are attracted towards $K_i$. Pick disjoint closed neighbourhoods $U_i$ of $K_i$ in $N^+$. Then $U_0 \cup U_1$ is a neighbourhood of $K$ in $N^+$ and so there exists $t_0$ such that $N^+ \cdot [t_0,+\infty) \subseteq U$. Define \[N^+_i := \{p \in N^+ : p \cdot t_0 \in U_i\}.\] Clearly these are disjoint closed sets (because so are the $U_i$) whose union is $N^+$. For each $p \in N^+$ the semiorbit $p \cdot [t_0,+\infty) \subseteq U = U_0 \uplus U_1$ is connected and hence contained entirely in the same $U_i$ that contains $p \cdot t_0$. Therefore each $N_i^+$ is positively invariant and attracted by $K_i$.

Decompose $n^+$ as the disjoint union of the compact sets $n^+_i := N^+_i \cap \partial N$ for $i = 0,1$. Now we have that \[(N^+/n^+,*) = (N^+_0/n_0^+,*) \vee (N^+_1/n_1^+,*)\] which implies $\check{H}^*(N^+/n^+,*) = \check{H}^*(N_0^+/n_0^+,*) \oplus \check{H}^*(N_1^+/n_1^+,*)$ and, since the cohomology of $N^+/n^+$ is finitely generated by Proposition \ref{prop:coh}.(ii), the same is true of the cohomology of the $N^+_i/n^+_i$. Observe that the cohomology of $N^+_0$ is finitely generated as well because it coincides with that of $K_0$ by Proposition \ref{prop:coh}.(i). Then using $\check{H}^*(N^+_0/n^+_0,*) = \check{H}^*(N^+_0,n^+_0)$ and the long exact sequence for the pair we have that $n^+_0$ also has a finitely generated \v{C}ech cohomology.

Let $L_1 \subseteq {\rm Int}\ N^i$ be a compact $2$--manifold which is a neighbourhood of $n^+_1$ disjoint from $n^+_0$. Then the compact set $n_0^+ \cup L_1$ has a finitely generated \v{C}ech cohomology and so by Theorem \ref{teo:reg_surf_1} one can perform a sequence of disk removals and cuts along strips on $N^i$ that will produce a surface $P$ such that $n^+_0 \cup L_1 \subseteq P$ induces isomorphisms in \v{C}ech cohomology. We then apply the technique used to prove the handle theorem. At the end we will obtain an isolating block $B$ for $K$ such that $(B^i,b^+) \cong (P,n^+)$.

Let $C$ be a connected component of $B$, which is an isolating block by itself with entry set $C^i$. We prove that $C$ cannot intersect both $K_0$ and $K_1$. Since $C^+ \subseteq B^+ =  N^+ = N^+_0 \uplus N_1^+$ we may write $C^+$ as the disjoint union of the two compact sets $C_0^+ := C^+ \cap N_0^+$ and $C_1^+ := C^+ \cap N_1^+$. These sets intersect $C^i$ in a subset of $n_0^+$ and $n_1^+$ respectively. By construction $C^i$ is a union of components of $P$ and none of these intersects both $n_0^+$ and $n_1^+$ simultaneously. Write $C^i_0$ for the union of the components of $C^i$ which intersect $n^+_0$ and $C^i_1$ for the union of the remaining ones. Then we have that $C^+_0 \cap C^i = C^+_0 \cap C^i_0$ and similarly $C^+_1 \cap C^i = C^+_1 \cap C^i_1$, and so \[C^i \cup C^+ = (C^i_0 \cup C^+_0) \uplus (C^i_1 \cup C^+_1)\] exhibits the connected set $C^i \cup C^+$ as the disjoint union of two compact sets. However, by deforming the connected set $C$ onto $C^i \cup C^+$ in infinite time (as in the proof of Proposition \ref{prop:coh}) we see that $C^i \cup C^+$ must be connected too, and so one of the sets displayed above must be empty. In particular $C^+$ is entirely contained in one of the $N_i^+$ and so the maximal invariant subset of $C$ is a subset of either $K_0$ or $K_1$.

Write $B = B_0 \uplus B_1$ where $B_0$ is the union of those components of $B$ that intersect $K_0$ and $B_1$ is the union of the rest. By construction $B_1$ is disjoint from $K_0$ and by the paragraph above $B_0$ is disjoint from $K_1$. Thus each $B_i$ is an isolating block for $K_i$. It only remains to show that $B_0$ is regular. By the construction of $B$ we have that $b^+ = n^+$, and by the discussion above we know that $B_0$ contains $N_0^+$ and is disjoint form $N_1^+$, so in fact $b_0^+ = n_0^+$. Then by Proposition \ref{prop:coh} we have $H^*(B_0,K_0) = H^*(B_0^i,b_0^+) = H^*(P_0,n_0^+)$ where $P_0$ is the union of the components of $P$ that intersect $n_0^+$. By construcion of $P$ we have $\check{H}^*(P_0,n_0^+) = 0$ and so $B_0$ is a regular isolating block for $K_0$.
\end{proof}

\subsection{} The same woodworking described in this section can be used to prove the existence of a basis of  isolating blocks for any isolated invariant set $K$. 
Churchill \cite{churchill1} proved that every isolated invariant set in a locally compact metric space has a neighbourhood basis of isolating neighbourhoods $N$ with continuous $t^i$ and $t^o$ mappings. We take these as our starting point:

\begin{proposition} Let $N$ be an isolating neighbourhood with $N^i$ and $N^o$ closed. Then the set of transverse entry points is a boundariless $2$--manifold which contains $n^+$ and is open in ${\rm fr}\ N$.
\end{proposition}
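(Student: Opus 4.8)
The plan is to establish the three assertions separately, writing $O$ for the set of transverse entry points of $N$. Throughout I will use that, because $N^i$ and $N^o$ are closed, the maps $t^i,t^o,\pi^i,\pi^o$ are continuous, so that Proposition \ref{prop:structure} and Remark \ref{rem:structure} apply.

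\textit{$n^+\subseteq O$.} Let $p\in n^+$, so $p\in{\rm fr}\ N$, $t^o(p)=+\infty$ and $\emptyset\neq\omega(p)\subseteq K\subseteq{\rm int}\ N$. By Remark \ref{rem:structure} the segment $p\cdot(t^i(p),+\infty)$ lies entirely in ${\rm fr}\ N$ or entirely in ${\rm int}\ N$; the former would force $\omega(p)\subseteq{\rm fr}\ N$, so it lies in ${\rm int}\ N$, and since $p=p\cdot 0\notin{\rm int}\ N$ this forces $t^i(p)=0$. Thus $p\cdot(0,+\infty)\subseteq{\rm int}\ N$, the forward half of transversality. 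For the backward half, if $p\cdot(-\epsilon,0)\cap N\neq\emptyset$ for every $\epsilon>0$ pick $t_k\uparrow 0$ with $p\cdot t_k\in N$; by continuity of $t^o$ we have $t^o(p\cdot t_k)\to t^o(p)=+\infty$, so $t^o(p\cdot t_k)>-t_k$ for large $k$, whence $p\cdot[t_k,0]=(p\cdot t_k)\cdot[0,-t_k]\subseteq N$ and $t^i(p)\leq t_k<0$, a contradiction. Hence $p\in O$.

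\textit{$O$ is open in ${\rm fr}\ N$.} Let $p\in O$ and fix a transversality parameter; shrinking it, assume $p\cdot[-\epsilon,0)\cap N=\emptyset$, $p\cdot(0,\epsilon]\subseteq{\rm int}\ N$ and $t^o(p)>\epsilon$. By continuity of the flow choose a neighbourhood $W$ of $p$ in $\mathbb{R}^3$ with $W\cdot[-\epsilon,-\epsilon/2]\cap N=\emptyset$ and $W\cdot[\epsilon/2,\epsilon]\subseteq{\rm int}\ N$, and, shrinking $W$ by continuity of $t^o$, also $t^o(p')>\epsilon$ for all $p'\in W$. Take $p'\in W\cap{\rm fr}\ N$. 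From $W\cdot[-\epsilon,-\epsilon/2]\cap N=\emptyset$ one gets $t^i(p')\geq-\epsilon/2$, so $\epsilon\in(t^i(p'),t^o(p'))$ while $p'\cdot\epsilon\in{\rm int}\ N$; by Remark \ref{rem:structure} the whole segment $p'\cdot(t^i(p'),t^o(p'))$ lies in ${\rm int}\ N$, and since $p'=p'\cdot 0\in{\rm fr}\ N$ this forces $t^i(p')=0$, i.e. $p'\cdot(0,t^o(p'))\subseteq{\rm int}\ N$. Finally, exactly as above, if $p'\cdot s_k\in N$ with $s_k\uparrow 0$ then $t^o(p'\cdot s_k)\to t^o(p')>\epsilon>0$, contradicting $t^i(p')=0$; so $p'\cdot(-\mu,0)\cap N=\emptyset$ for some $\mu>0$. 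Thus $W\cap{\rm fr}\ N\subseteq O$ and $O$ is open; in particular $n^+\subseteq O$.

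\textit{$O$ is a boundariless $2$--manifold.} Fix $p\in O$. Since $O$ is open in ${\rm fr}\ N$, Proposition \ref{prop:loc_prod} provides a neighbourhood $V\subseteq O$ of $p$ in ${\rm fr}\ N$ and $\delta>0$ with $\varphi\colon V\times(-\delta,\delta)\to U:=V\cdot(-\delta,\delta)$ a homeomorphism, $U$ open in $\mathbb{R}^3$, and $U\cap N=V\cdot[0,\delta)$. A direct check with this homeomorphism shows ${\rm fr}\ N\cap U=V$ (points of $U$ with negative flow-coordinate lie outside $N$, those with positive flow-coordinate lie in ${\rm int}\ N$), so $V$ is an open neighbourhood of $p$ in ${\rm fr}\ N$ and $U\cong V\times(-\delta,\delta)$ is a $3$--manifold. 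That $V$ is then locally Euclidean of dimension $2$ can be obtained concretely: $y_0:=p\cdot(\delta/2)\in{\rm int}\ N\cap U$ is not a fixed point (the orbit of $p$ also meets $p\cdot(-\epsilon)\notin N$), so it admits a local section $\Sigma\cong\mathbb{D}^2$ contained in ${\rm int}\ N\cap U$, disjoint from $N^-$, meeting each orbit in at most one point and with $y_0$ in its relative interior; then $\pi^i|_\Sigma$ is a continuous injection of the compact set $\Sigma$ into $V$, hence a homeomorphism onto $\pi^i(\Sigma)\subseteq V$, and $\pi^i(\Sigma)$ contains a neighbourhood of $p=\pi^i(y_0)$ in $V$. (Alternatively one may invoke the standard fact that a separable metric space whose product with $\mathbb{R}$ is a manifold is itself a manifold.) Since $U$ is boundariless so is $V$, and as $p\in O$ was arbitrary, $O$ is a boundariless $2$--manifold.

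The step requiring the most care is excluding the \emph{sliding} phenomenon in the second part: a priori the orbit of a boundary point $p'$ near $p$ might run along ${\rm fr}\ N$ for a positive time before entering ${\rm int}\ N$ (this genuinely occurs when $N^i$ is not closed), in which case $p'$ would not be a transverse entry point. It is precisely the dichotomy of Remark \ref{rem:structure} — equivalently, the continuity of $t^i$ and $t^o$ — that rules this out, and this is where the closedness of $N^i$ and $N^o$ is used essentially. The only other nonroutine point is the passage from ``$U\cong V\times(-\delta,\delta)$ is a $3$--manifold'' to ``$V$ is a $2$--manifold'', handled above.
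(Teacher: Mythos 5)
Your proof is correct and takes essentially the same route as the paper's, differing mainly in packaging: you argue the three conclusions separately by explicit $\epsilon$--$\delta$ estimates using Remark~\ref{rem:structure} and the continuity of $t^o$, whereas the paper first isolates a single local Claim (any boundary point near a point $p$ with $p\cdot(0,\epsilon]\subseteq{\rm int}\ N$ is a transverse entry) from which both $n^+\subseteq O$ and the openness of $O$ follow at once. The genuine divergence is in the last step. The paper observes that $V\times(-\delta,\delta)\cong U$ is an open subset of $\mathbb{R}^3$, hence a $3$--manifold, and then invokes the generalized-manifold literature (Chewning--Owen) to conclude that the cartesian factor $V$ is a $2$--manifold; you instead pull back a disk local section $\Sigma\cong\mathbb{D}^2$ through a nearby non-fixed interior point to $V$ via $\pi^i$. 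This is a pleasant argument, but be aware it does not buy you anything more elementary: the existence of $2$--disk local sections for merely \emph{continuous} flows on $3$--manifolds is itself a theorem that rests on exactly the same generalized-manifold input (one proves it precisely by the cartesian-factor argument applied to the Whitney--Bebutov section). Your parenthetical ``standard fact'' --- that a separable metric space whose product with $\mathbb{R}$ is a manifold must itself be a manifold --- is false in general (Bing's dogbone space $X$ is a non-manifold with $X\times\mathbb{R}\cong\mathbb{R}^4$) and holds only in the low dimensions relevant here, which is precisely what the Chewning--Owen reference supplies; so the unqualified parenthetical should be softened to a dimension-restricted statement.
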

\begin{proof} We shall make use of Remark \ref{rem:structure}. First we prove the following:

{\it Claim.} Let $p \in {\rm fr}\ N$ satisfy $p \cdot (0,\epsilon] \subseteq {\rm int}\ N$ for some $\epsilon > 0$. Then there exist $\delta > 0$ and a neighbourhood $V$ of $p$ in ${\rm fr}\ N$ such that every $q\in V$ satisfies $q \cdot (-\delta,0) \cap N = \emptyset$ and $q \cdot (0,\delta) \subseteq {\rm int}\ N$. In particular, each $q \in V$ is a transverse entry point.

{\it Proof.} Let $U$ be a neighbourhood of $p$ in $N$ such that $t^o|_U > \epsilon$. Choose $\delta \in (0,\epsilon)$ and a neighbourhood $V$ of $p$ in ${\rm fr}\ N$ such that $(V \cdot [-\delta,\delta]) \cap N \subseteq U$ and $V \cdot \delta \subseteq {\rm int}\ N$. Let $q \in V$ and consider the trajectory segment $\sigma := q \cdot (-\delta,\delta)$. By construction $q \cdot [0,\delta] \subseteq N$ and $q \cdot \delta \in {\rm int}\ N$ so by (*) we have $q \cdot (0,\delta) \subseteq {\rm int}\ N$. To prove that $q \cdot (-\delta,0) \cap N = \emptyset$, assume $\sigma$ intersects $N$ at some point $q' := q \cdot t$ with $t < 0$; i.e. coming before $q$. Since $q' \in U$ we have $t^o(q') > \epsilon$ and so $q' \cdot [0,\epsilon]$ is entirely contained in $N$. Writing it as $q \cdot [-t,0] \cup q \cdot [0,\epsilon]$ we observe that it intersects ${\rm fr}\ N$ at the interior point $q$, so by (*) the whole segment must be contained in ${\rm fr}\ N$. However, $q \cdot \epsilon \in {\rm int}\ N$. $_{\blacksquare}$
\medskip

Let $O \subseteq N^i$ be the set of transverse entry points. It is open by the preceding claim. It contains $n^+$: since the maximal trajectory segment of every $p \in n^+$ cannot be contained in ${\rm fr}\ N$, Remark \ref{rem:structure} ensures that $p \cdot (0,+\infty) \subseteq {\rm int}\ N$ and then the claim above implies that $p$ is a transverse entry point. Proposition \ref{prop:loc_prod} implies that each such point $p$ has a neighbourhood $V$ such that $V \times (-\delta,\delta) \cong V \cdot (-\delta,\delta)$ via the flow and the latter is an open neighbourhood of $p$ in $\mathbb{R}^3$. In particular $V \times (-\delta,\delta)$ is a $3$--manifold, and this implies that $V$ is a $2$--manifold (without boundary). The proof of this uses the notion of a generalized manifold: see Chewning and Owen \cite{chewningowen1} and the references therein.
\end{proof}

Given an isolated invariant set $K$ we may find an isolating neighbourhood $N$ with closed $N^i$ and $N^o$ by Churchill's result. Then by the proposition above there exists a compact $2$--manifold $P$ in $O$ which is a neighbourhood of $n^+$. Define again $B_0 := \{p \in N : \pi^i(p) \in P\} \cup N^-$ and make it convex to the flow by using a small collar of $\partial P$ in $O$; then throw away the remaining part of $N$. The result is an isolating block $B$ for $K$ contained in $N$ (of course, not necessarily regular). \label{pg:constructiblock} For later reference we observe that $B$ can be made as big as needed: if $D$ is any compact subset of the interior of $N$ then one can achieve that $B$ contain $D$ in its interior. To check this first observe that $\pi^i(D) \cup n^+$ is a compact subset of $N^i$ and, since $D \subseteq {\rm int}\ N$, each point in $\pi^i(D)$ is a transverse entry point by the same argument given above for points in $n^+$. Thus we may choose $P$ to contain $\pi^i(D) \cup n^+$ in its interior, and then $B_0$ contains $D$ in its interior. The claim follows.

Thus any isolated invariant set $K$ has a neighbourhood basis of isolating blocks, and as a consequence of Theorem \ref{teo:asas} any such $K$ with a finitely generated cohomology has a neighbourhood basis of regular isolating blocks. \label{pg:ribs}

\section{Identifying regular isolating blocks \label{sec:concentric}}

The definition of a regular isolating block only stipulates what its cohomology should be, and in general that does not determine the topological type of a $3$--manifold except in very simple cases. For example if $K$ 
 has the cohomology of a point then so does a regular isolating block $B$, and then Lefschetz duality shows that $\partial B$ is a $2$--sphere. If the phase space is $\mathbb{R}^3$ it then follows from the tameness of $\partial B$ and the Sch\"onflies theorem that $B$ is a ball, as one would expect.

In this section we use different (non-homological) methods to prove that a regular isolating block is completely determined by $K$ up to ambient isotopy and even independently of the flow. As a consequence of this, in many cases it is possible to identify how a regular isolating block looks like without knowing what the dynamics are. For the whole section we assume that $K$ has regular isolating blocks, or equivalently that $\check{H}^*(K)$ is finitely generated (discussion at the end of the preceding section).

The following definition is not standard. Let $K$ be a compact subset of a $3$--manifold and let $N$ be a compact neighbourhood of $K$ that is a tame $3$--manifold. We say that $K$ is a spine of $N$ if there exists a homeomorphism $N - K \cong (\partial N) \times [0,+\infty)$ that sends every $p \in \partial N$ to $(p,0)$. We begin with the following result:

\begin{theorem} \label{teo:spine} If $B$ is a regular isolating block for an isolated invariant set $K$, then  $K$ is a spine for $B$.
\end{theorem}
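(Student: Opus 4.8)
The plan is to use the flow to deform $B$ onto $K$ ``in infinite time'' and then upgrade this deformation to the product structure claimed in the statement.

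\medskip

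\emph{Step 1: the flow gives a deformation retraction of $B - K$ onto the entry and exit sets.} Recall that since $B$ is an isolating block, the maps $t^i$, $t^o$, $\pi^i$, $\pi^o$ are continuous. Every point $p \in B - K$ has at least one of $t^i(p)$, $t^o(p)$ finite; say $t^o(p) < +\infty$ (the other case is symmetric, and I will reconcile the two below). The idea is to push $p$ forward along its trajectory toward $\pi^o(p) \in B^o$. More precisely, a point $p \notin B^+$ flows out through $B^o$ in finite time, and a point $p \notin B^-$ flows in from $B^i$ in finite backward time. Using a height function $u$ for $B$ (constructed at the end of Section \ref{sec:iblocks}) and the cylindrical coordinates it provides on the two patches $B - B^-$ and $B - B^+$, one sees that $B - K$ is covered by two charts, each homeomorphic to (a subset of) $B^i \times (-1,1]$, respectively $B^o \times [-1,1)$, glued along $\{u = 0\}$. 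From this I would first produce, inside each patch, a deformation retraction of $B - K$ onto the level set $\{u = 0\}$ using the flow to drive $u$ to $0$, and verify these agree on the overlap.

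\medskip

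\emph{Step 2: identify the level set $\{u = 0\}$ and get a collar.} The level set $S := u^{-1}(0) \cap B$ is a compact surface separating $B - K$, and the flow identifies $B - K$ with $S \times \mathbb{R}$ via $(q, s) \mapsto (\text{point at height } \operatorname{sgn}(s)(1 - e^{-|s|}) \text{ on the trajectory through } q)$ or something equivalent; concretely, cylindrical coordinates plus the height function give a homeomorphism $B - K \cong S \times (-1,1)$ under which $u$ becomes the projection onto $(-1,1)$. On the other hand $\partial B \cap \{|u| < 1\}$ consists exactly of tangency points, which are controlled and in fact each trajectory near a tangency curve is externally tangent, so $\partial B$ is swept out as $|u| \to 1$. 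The upshot is a homeomorphism $B - K \cong \partial B \times (0, \infty)$: one half $\{u \ge 0\}$ of $B - K$ deformation retracts onto $B^i$ and the other onto $B^o$, and these two collars are glued along the tangency curves $\partial B^i = \partial B^o$, reassembling the closed surface $\partial B$ with its open collar.

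\medskip

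\emph{Step 3: use regularity to rule out extra topology.} So far nothing has used that $B$ is \emph{regular}. Regularity enters to ensure that the deformation retraction of $B - K$ onto $\partial B$ extends to give that $K$ is genuinely a spine, i.e. that the end $B - K$ really is a half-open cylinder $\partial B \times [0,\infty)$ \emph{rel} $\partial B$ and that no ``extra'' structure is trapped between $\partial B$ and $K$. Concretely, $\check{H}^*(B, K) = 0$ (this is Proposition \ref{prop:coh} together with the definition of regular, as in the proof of Theorem \ref{teo:asas}), and by Lefschetz duality / the long exact sequence this forces $\check{H}^*(B - K)$, and more relevantly the homology of the pair $(B - K, \partial B)$, to vanish. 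Combined with the collar structure from Step 2 and the tameness of $\partial B$, an Alexander-type or engulfing argument (in the PL category, legitimate since $B$ is tame) then shows the inclusion $\partial B \times [0,\infty) \hookrightarrow B - K$ provided by the flow is in fact onto, i.e. $B - K \cong \partial B \times [0,\infty)$ carrying $p \in \partial B$ to $(p,0)$.

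\medskip

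\emph{Main obstacle.} The soft part — that the flow retracts $B - K$ onto $B^i \cup_{\text{tangencies}} B^o$ and that this closed surface is $\partial B$ — is essentially bookkeeping with cylindrical coordinates and Propositions \ref{prop:structure}, \ref{prop:coh}. The genuine difficulty is Step 3: translating the cohomological vanishing $\check{H}^*(B,K) = 0$ into the \emph{topological} statement that $B - K$ has no more topology than its boundary, i.e. that the open manifold $B - K$ is a product. This is where I expect to need the tameness hypothesis in an essential way, presumably via a PL collaring/engulfing theorem or by invoking a known structure theorem for tame neighbourhoods with the cohomology of their boundary (the ``regular neighbourhood'' analogy the author alludes to right after the definition of regular isolating block). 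Getting the product structure to hold \emph{rel $\partial B$}, rather than just up to homotopy, is the crux.
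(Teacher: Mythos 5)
There is a genuine gap. Your Step 2 misstates what cylindrical coordinates give: the map $p \mapsto (\pi^i(p), u(p))$ degenerates along the tangency curves $\partial B^i = \partial B^o$, where trajectory segments collapse to points and $u$ is undefined, so $B - K$ is not a global product $S \times (-1,1)$ via $u$. (Note also that $S \times (-1,1)$ and $\partial B \times (0,\infty)$ are boundaryless, whereas the theorem requires $B - K \cong \partial B \times [0,\infty)$, which has boundary $\partial B$.) Handling the degeneration along the tangency curves is exactly where the real work of the proof lies, and Step 2 glides past it. Step 3 is then not an argument at all: ``an Alexander-type or engulfing argument\ldots then shows\ldots'' does not specify which theorem is invoked or why its hypotheses hold, and you explicitly flag this as a crux you cannot carry out. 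Passing from $H_*(B - K, \partial B) = 0$ plus tameness to a product structure rel $\partial B$ is not automatic.

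The paper avoids engulfing entirely and works in the opposite direction: combinatorially, and at the level of surfaces first. Regularity gives $\check{H}^*(B^i, b^+) = 0$ via Proposition~\ref{prop:coh}, and Alexander duality \emph{inside the surface} $B^i$ yields $H_*(B^i - b^+, \partial B^i) = 0$; together with the classification of noncompact surfaces this forces each component $A$ of $B^i - b^+$ to be a once-punctured closed disk. Precisely because $A$ is that simple, the union $U$ of trajectory segments entering $B$ through $A$ can be homeomorphed onto an explicit model ``spindle'' $S$ missing its axis, and a bespoke bent-ray projection $S \to (\partial S) \times [0,1)$ --- designed to agree on the overlap with the $(\pi^i, u)$-- and $(\pi^o, u)$--parametrizations already defined on $B^+ \cup B^- - K$ --- is glued in to assemble the global homeomorphism $B - K \cong \partial B \times [0,1)$ sending $p \mapsto (p,0)$. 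The once-punctured-disk claim and the spindle construction are the substance of the proof and are missing from your proposal; the former is the one concrete place where regularity is used.
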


The proof requires a preliminary construction. The left panel of Figure \ref{fig:spindle} shows a triangular region $T$ missing its left side; namely $T := \{(x,y) \in \mathbb{R}^2 : 0 < x \leq 1, |y| \leq 1-x\}$. Denote by $\partial T$ the boundary of $T$ as a manifold; i.e. the two thick sides which are half open intervals. There is an obvious homeomorphism $T \longrightarrow (\partial T) \times [0,1)$ whose first component is given by radial projection from $(0,0)$ and whose second component is essentially the relative position of a point in its projecting ray. 

Consider the right panel of Figure \ref{fig:spindle}. It shows the same triangle $T$ together with an auxiliary dotted curve. We use this curve to fiber $T$ with disjoint rays emanating from the origin as follows. We start from the origin in any direction and draw a straight line. If it hits the dotted curve, we bend the ray vertically upwards (or downwards, in the bottom half of the figure) until it hits $\partial T$. If the ray never hits the dotted curve we just prolong it until it hits $\partial T$. This produces a partition of $T$ into rays and we use it to define a homeomorphism $f = (f_1,f_2) : T \longrightarrow (\partial T) \times [0,1)$ as follows. Given $p \in T$, find the ray passing through it and follow it up to $\partial T$; the endpoint is $f_1(p)$. To obtain $f_2(p)$ just take the distance of $p$ to $\partial T$ as measured along the ray and divide it by the total length of the ray. Thus $f_1$ is some sort of ``radial projection'' from the origin but with bent rays. The crucial property of these rays is how the behave near a point $(0,y)$, with $y \neq 0$, on the vertical axis: there is a small disk around the point such that all rays crossing that disk are already vertical. This implies that if $(x_k,y_k) \rightarrow (0,y)$ with $y > 0$ (say) then $f_1(x_k,y_k) = (x_k,1-x_k)$ for $k$ large enough. Also, since the projecting ray through $(x_k,y_k)$ is almost a straight vertical line of length $1$, $f_2(x_k,y_k)$ is approximately $1-y_k$ and in fact converges to $1-y$ as $k \rightarrow +\infty$.

Rotating this configuration in $3$--space around the vertical axis produces a sort of spindle $S$ missing its symmetry axis. The open interval $(0,1]$ of the horizontal axis produces, under this rotation, a once-punctured (at the origin) closed disk $F$. The map $f$ extends in the obvious way to a homeomorphism of $S$ onto $(\partial S) \times [0,1)$.

\begin{figure}[h]
\null\hfill
\subfigure[]{
	\begin{pspicture}(0,0)(4.5,8)
	%\psgrid(0,0)(4.5,8)
	\rput[lb](0,0){\scalebox{0.8}{\includegraphics{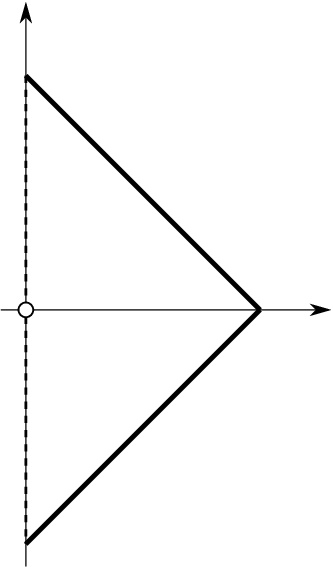}}}
	\end{pspicture}}
\hfill
\subfigure[]{
	\begin{pspicture}(0,0)(4.5,8)
	%\psgrid(0,0)(4.5,8)
	\rput[lb](0,0){\scalebox{0.8}{\includegraphics{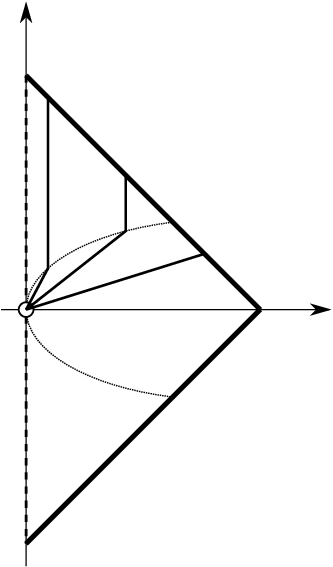}}}
	\end{pspicture}}
\hfill\null
\caption{ \label{fig:spindle}}
\end{figure}

\begin{proof}[Proof of Theorem \ref{teo:spine}] Suppose $B$ is a regular isolating block for $K$. We want to define a homeomorphism $h : B - K \longrightarrow \partial B \times [0,1)$. Let $u(p)$ be the height function of $B$ and recall that it is well defined over all of $B$ except for its tangency curves; i.e. $\partial B^i$. In particular points in $B^+ \cup B^-$ have a well defined height and we begin by defining $h(p)$ for points there by

\[h(p) := \left\{ \begin{array}{cc} (\pi^i(p),1-u(p)) & \text{ if } p \in B^+ - K \\ (\pi^o(p), 1+u(p)) & \text{ if } p \in B^- - K \end{array} \right.\]

This is a homeomorphism between $(B^+ \cup B^-)- K$ and $(b^+ \cup b^-) \times [0,1)$. To extend it to the rest of $B$ we do as follows. Concentrate on any one component $A$ of $B^i - b^+$.
\smallskip

{\it Claim.} $A$ is a once-punctured closed disk. \label{pg:punctured}

{\it Proof of claim.} Observe that $B^i - b^+$ is open in $B^i$ and so it is a surface with boundary $\partial (B^i - b^+) = \partial B^i$. Also, by Alexander duality one has $H_k(B^i-b^+,\partial B^i) = \check{H}^{2-k}(B^i,b^+) = 0$ where the latter equality owes to the assumption that $B$ is a regular isolating block for $K$. This shows that each component $A$ of $B^i - b^+$ contains exactly one component $\gamma$ of $\partial B^i$ and, moreover, $H_k(A,\gamma) = 0$. Thus capping $\gamma$ with a disk produces a connected, boundariless surfacewhich is noncompact and has trivial homology. It follows from the classification of noncompact surfaces (\cite[Theorem 3, p. 268]{richards1}) that it is an open disk, and so $A$ is once-punctured closed disk. $_{\blacksquare}$
\medskip

Let $U$ be the set of points $p \in B$ whose trajectory enters $B$ through $A$; i.e. $U := \{p \in B : \pi^i(p) \in A\}$. Recall that the spindle $S$ has as its equatorial section a once-punctured closed disk $F$. Pick a homeomorphism $a : A \longrightarrow F$ and extend it to all of $U$ setting $a(p) := a(\pi^i(p))$. We use this to construct a homeomorphism $g$ of $U$ onto the spindle $S$ just by sending the trajectory segment of a point $p \in U$ onto the vertical line through $a(p)$ in the spindle: $g(p) := (a(p),(1-\|a(p)\|)u(p))$. For points $p \in \partial A$ the height $u(p)$ is not well defined and we set $g(p) := (a(p),0)$. This map is obviously continuous at points $p \not\in \partial A$; it is also continuous at $p \in \partial A$ because $|(1-\|a(q)\|) u(q)| \leq 2(1 - a(q))$ which goes to zero as $q \rightarrow p$. Notice that as $p \not\in \partial A$ ranges over a trajectory segment in $U$ the number $u(p)$ goes from $1$ to $-1$ but the point $a(p)$ remains constant, and so $g(p)$ indeed runs vertically from the top of the spindle to the bottom. If $p \in \partial A$ then its trajectory segment in $U$ is just $p$ itself, which is again the (degenerate) vertical line through $a(p)$ in the spindle $S$. 

Now we copy the ``bent-ray'' radial projection $f : S \longrightarrow (\partial S) \times [0,1)$ from the spindle to $U$ defining $h = (h_1,h_2) : U \longrightarrow (\partial U) \times [0,1)$ by $h_1 = g^{-1} \circ f_1 \circ g$ and $h_2 := f_2 \circ g$. This is obviously a homeomorphism that takes each point $p \in \partial U$ to $(p,0)$. Observe the following: if $p_k$ is a sequence of points in $U \subseteq N$ converging to a point $p \in B^+ - K$ we have that $u(p_k) \rightarrow u(p) > 0$ and $a(p_k) \rightarrow 0$ (this is forced for topological reasons). Thus $g(p_k) \rightarrow (0,u(p))$ and by the properties of the projection $f$ mentioned above $f_1(g(p_k)) = (a(p_k),1-\|a(p_k)\|)$ and $f_2(g(p_k)) \rightarrow 1 - u(p)$. Upon applying $g^{-1}$ applying $g^{-1}$ onto $f_1(g(p_k))$ we recover the point in the trajectory segment of $p_k$ at height $u = 1$, i.e. $\pi^i(p_k)$. Thus $h(p_k) \rightarrow (\pi^i(p),1-u(p))$.   
\end{proof}

Suppose that two compact $3$--manifolds $N'$ and $N$ are nested; that is, $N'$ is contained in the interior of $N$. Then $N$ and $N'$ are called concentric if there exists a homeomorphism $N - {\rm Int}\ N' \cong (\partial N) \times [0,1]$. We will make use of the following result of Edwards (\cite[Theorem 2, p. 419]{edwards2}): let $M_0 \subseteq M \subseteq M_1$ be compact $3$--manifolds with homeomorphic nonempty boundaries, with $M_0$ and $M$ tamely imbedded in ${\rm Int}\ M$ and ${\rm Int}\ M_1$ respectively. If $M_0$ and $M_1$ are concentric, then $M$ is concentric with both $M_0$ and $M_1$.

\begin{lemma} \label{lem:concentric} Let $N'$ be contained in the interior of $N$ and suppose that $K$ is a spine for both $N$ and $N'$. Then $N$ and $N'$ are concentric. In particular, they are ambient isotopic relative to $K$.
\end{lemma}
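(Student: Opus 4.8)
The plan is to deduce Lemma \ref{lem:concentric} from Edwards' theorem by building a suitable nested triple of $3$--manifolds around $K$ all sharing the same exterior structure. First I would use the hypothesis that $K$ is a spine of $N$ to fix a homeomorphism $N - K \cong (\partial N) \times [0,+\infty)$ which is the identity (i.e. $p \mapsto (p,0)$) on $\partial N$. Under this identification, for each $t \geq 0$ the set $N_t := K \cup \big((\partial N) \times [t,+\infty)\big)$ is a compact neighbourhood of $K$ in $N$, it is a $3$--manifold with boundary $(\partial N) \times \{t\}$, and for $s < t$ the region between $N_s$ and $N_t$ is just $(\partial N) \times [s,t] \cong (\partial N) \times [0,1]$, so any two of these $N_t$ are concentric. (One should also note that these $N_t$ are tame: they are $3$--manifolds sitting in a $3$--manifold, and in $\mathbb{R}^3$ tameness of a compact $3$--manifold with boundary a tame surface is automatic by the Bing–Moise local tameness theorem, the surface $\partial N$ being tame since $N$ is; the same applies to $N'$ and to $N$ itself.)

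The key geometric observation is that $N'$, lying in the interior of $N$, must contain one of these collars and be contained in another: since $N'$ is a compact subset of $\mathrm{Int}\ N = K \cup \big((\partial N) \times (0,+\infty)\big)$, its image has $\partial N \times \{t\}$-coordinate bounded above, say by $t_1$, so $N' \subseteq N_{t_0}$ for suitable $t_0 < t_1$... more precisely $N'$ meets only $K \cup (\partial N \times (0, t_1])$, hence $N' \subseteq N_{t_0'}$ is false in general — rather I want the reverse inclusions. The correct chain: because $N'$ is a compact neighbourhood of $K$, there is a small $t_1$ with $N_{t_1} \subseteq \mathrm{Int}\ N'$ (the sets $N_t$ shrink to $K$ as $t \to \infty$, so for $t$ large, $N_t \subseteq \mathrm{Int}\ N'$; rename this large value $t_1$). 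And because $N'$ is compact and disjoint from $\partial N$, there is a small $t_0 \in (0, t_1)$ with $N' \subseteq N_{t_0}$, in fact $N' \subseteq \mathrm{Int}\ N_{t_0}$ after shrinking $t_0$ slightly. So we have tame compact $3$--manifolds
\[
N_{t_1} \subseteq N' \subseteq N_{t_0},
\]
with $N_{t_1}$ in the interior of $N'$ and $N'$ in the interior of $N_{t_0}$, all four of $N_{t_1}, N', N_{t_0}, N$ having homeomorphic (indeed collared) boundaries, and $N_{t_1}$ concentric with $N_{t_0}$ as noted above. Edwards' theorem then gives that $N'$ is concentric with $N_{t_0}$, and another application (or the same one with $N$ in place of $N_{t_0}$, using $N_{t_1} \subseteq N' \subseteq N$) gives that $N'$ is concentric with $N$; equivalently $N - \mathrm{Int}\ N' \cong (\partial N) \times [0,1]$.

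Finally, concentricity yields the ambient isotopy rel $K$: a product structure $N - \mathrm{Int}\ N' \cong (\partial N) \times [0,1]$ together with the spine structure $N' - K \cong (\partial N') \times [0,+\infty)$ (and $\partial N' \cong \partial N$) can be glued along $\partial N'$ to produce a homeomorphism $N - K \cong (\partial N) \times [0,+\infty)$ that pushes $\partial N$ all the way in to within any prescribed neighbourhood of $K$; reparametrising the $[0,+\infty)$ factor and extending by the identity on $K$ and outside $N$ produces an ambient isotopy of the ambient $3$--manifold carrying $N$ onto $N'$ and fixing $K$ pointwise. (Here one uses the standard fact that an isotopy supported in the interior of a compact region extends to an ambient isotopy; alternatively one invokes the uniqueness of collars.)

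The main obstacle I expect is purely bookkeeping: verifying the tameness and boundary hypotheses of Edwards' theorem for each manifold in the chain (the collars $N_t$ have boundary $(\partial N)\times\{t\}$, which is tame, but one must be careful that $N_t$ is tamely embedded — this is where the local tameness criterion of Bing–Moise, already invoked in Subsection \ref{sub:tame}, does the work), and arranging the inclusions to be strict (interior) inclusions so that Edwards' theorem literally applies. The essential content — that two spines-neighbourhoods of the same $K$ are interchangeable — is entirely carried by Edwards' concentricity theorem once the nested collar structure above is in place; there is no further knot-theoretic or dynamical input needed here.
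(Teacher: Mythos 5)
Your approach is structurally the same as the paper's: fix the spine homeomorphism $h : N - K \to (\partial N) \times [0,+\infty)$, define the shrinking collar neighbourhoods $N_t$, squeeze $N'$ between two of them, invoke Edwards' concentricity theorem, and then upgrade concentricity to ambient isotopy rel $K$ via uniqueness of collars. However, there is a genuine gap, and it is precisely the step you wave off as ``bookkeeping.''

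Edwards' theorem has ``homeomorphic nonempty boundaries'' as a \emph{hypothesis}, not a conclusion. In your chain $N_{t_1} \subseteq N' \subseteq N_{t_0}$ the outer two manifolds have boundary homeomorphic to $\partial N$ by construction, but you assert ``all four \ldots\ having homeomorphic boundaries'' without any argument that $\partial N' \cong \partial N$. This is not automatic from nesting plus concentricity of the outer pair: a small ball inside a solid torus inside a large ball gives $M_0 \subseteq M \subseteq M_1$ with $M_0, M_1$ concentric balls, yet $\partial M$ a torus. Establishing $\partial N' \cong \partial N$ is in fact the main content of the paper's proof. It is done by a cohomological sandwich: interlace shrinking collars of both spine structures, $N'_r - K \subseteq N_s - K \subseteq N' - K \subseteq N - K$, observe that the two outer composites induce isomorphisms on $\check{H}^*$ (each is an inclusion of $(\partial\,\cdot\,) \times [r,+\infty)$ into $(\partial\,\cdot\,) \times [0,+\infty)$), and deduce that the middle inclusion $N' - K \subseteq N - K$ induces isomorphisms as well. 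Since $N - K$ and $N' - K$ deformation retract onto $\partial N$ and $\partial N'$, this yields an isomorphism of cohomology \emph{rings}, and then the classification of closed orientable surfaces (the ring structure tracking how genus is distributed over components, in case the boundaries are disconnected) gives $\partial N \cong \partial N'$. You need to supply this argument, or an equivalent, before Edwards' theorem can legitimately be invoked; once it is in place, the remainder of your sketch, including the final isotopy step, matches the paper's.
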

\begin{proof} Let $h : N - K \longrightarrow (\partial N) \times [0,+\infty)$ and $h' : N' - K \longrightarrow (\partial N') \times [0,+\infty)$ be the homeomorphisms provided by the condition that $K$ be a spine for both $N$ and $N'$. For every positive real number $r$ denote by $N_r := h^{-1}((\partial N) \times [r,+\infty)) \cup K$. These form a decreasing (with increasing $r$) neighbourhood basis of $K$. Define $N'_r$ in a similar fashion.
\smallskip

{\it Claim.} The inclusion $N' - K \subseteq N - K$ induces isomorphisms in cohomology with $\mathbb{Z}$ coefficients.

{\it Proof of claim.} Pick $r$ and $s$ so big that $N_s \subseteq N'$ and $N'_r \subseteq N_s$. Then \[N'_r - K \subseteq N_s - K \subseteq N' - K \subseteq N - K\] and this chain of inclusions induces a chain of homomorphisms in $q$--dimensional cohomology \begin{equation} \label{eq:inclusions} H^q(N'_r - K) \stackrel{\bf 1}{\longleftarrow} H^q(N_s - K) \stackrel{\bf 2}{\longleftarrow} H^q(N' - K) \stackrel{\bf 3}{\longleftarrow} H^q(N - K)\end{equation}

Consider the commutative diagram \[\xymatrix{N'_r - K \ar[d]^{\cong}_{h'|} \ar[r] & N'-K \ar[d]^{\cong}_{h'} \\ \partial N' \times [r,+\infty) \ar[r] & \partial N' \times [0,+\infty)}\] where the horizontal unlabeled arrows are inclusions and the left vertical arrow is the appropriate restriction of $h'$. The vertical arrows are homeomorphisms. Clearly the lower inclusion induces an isomorphism in $H^q$, and it follows that so does the upper arrow. But the homomorphism induced by the upper arrow is precisely the composition of {\bf 2} and {\bf 1} in Equation \eqref{eq:inclusions} above, so it follows that {\bf 2} is injective. An entirely analogous argument (now using $N - K$ and $N_s - K$) shows that the composition of {\bf 3} and {\bf 2} is also an isomorphism and therefore {\bf 2} is surjective. Thus {\bf 2} is an isomorphism, and then using again that the composition of {\bf 3} and {\bf 2} is an isomorphism, it follows that {\bf 3} itself must be an isomorphism too, as was to be shown. $_{\blacksquare}$
\smallskip

Let $M_0 = N_r$, $M = N'$ and $M_1 = N$. Clearly $N_r$ and $N$ are concentric, because $h$ provides a homeomorphism between $N - {\rm Int}\ N_r$ onto $(\partial N) \times [0,r]$. Also, it follows from the previous claim and the homeomorphisms provided by $h$ and $h'$ that the closed surfaces $\partial N$ and $\partial N'$ have isomorphic cohomology rings and hence are homeomorphic. Using the cohomology ring structure accounts for the possible non connectedness of $\partial N$ and $\partial N'$: cup product with the obvious generators of $H^0$ remembers how the elements in $H^1$ are distributed among the components of the surface. It then follows from the theorem of Edwards quoted above that $N'$ and $N$ are concentric.

Showing that two concentric manifolds are ambient isotopic is a routine argument which we just outline. Set $C_0 := \overline{N - N'}$ and let $C_1$ be a collar of $N$. (Recall that our manifolds are assumed to be tame, which is why $C_1$ exists). Now since $N$ and $N'$ are concentric, $C_0 \cup C_1$ is a collar of $N'$. Thus there exists an ambient isotopy which takes $N'$ to $N' \cup (C_0 \cup C_1)$. The latter is equal to $(N' \cup C_0) \cup C_1 = N \cup C_1$, and then there is an ambient isotopy which takes this to $N$. The concatenation of these isotopies produces the required isotopy sending $N'$ to $N$.
\end{proof}

\begin{theorem} Any two regular isolating blocks for $K$ (even for different flows) are ambient isotopic relative to $K$.
\end{theorem}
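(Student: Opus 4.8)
The plan is to deduce the statement from Theorem~\ref{teo:spine} and Lemma~\ref{lem:concentric} by first squeezing a third regular isolating block inside both of the given ones. So let $B_1$ and $B_2$ be regular isolating blocks for $K$, possibly with respect to different flows $\varphi_1$ and $\varphi_2$. Each $\varphi_i$ has $K$ as an isolated invariant set, and by the standing assumption of this section $\check{H}^*(K)$ is finitely generated; hence (see the discussion on p.~\pageref{pg:ribs}) the flow $\varphi_1$, say, admits a neighbourhood basis of $K$ consisting of regular isolating blocks. I would pick one such block $B$ small enough that $B \subseteq {\rm Int}\ B_1 \cap {\rm Int}\ B_2$.

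The point of this choice is that $B$, $B_1$ and $B_2$ are all regular isolating blocks (each for its own flow) and hence, by Theorem~\ref{teo:spine}, each has $K$ as a spine; moreover all three are tame, being isolating blocks (Definition~\ref{defn:iblocks}). Now $B$ is contained in the interior of $B_1$ and $K$ is a spine for both, so Lemma~\ref{lem:concentric} applies and produces an ambient isotopy of phase space, constant on $K$, carrying $B_1$ onto $B$. The same reasoning with $B_2$ in place of $B_1$ gives an ambient isotopy, again constant on $K$, carrying $B$ onto $B_2$. Concatenating these two isotopies (after reversing the first in the standard way) yields an ambient isotopy of phase space, fixing $K$ pointwise throughout, that carries $B_1$ onto $B_2$. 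This is exactly the assertion.

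I do not expect a genuine obstacle here: the substance has already been absorbed into Theorem~\ref{teo:spine} and Lemma~\ref{lem:concentric}. The only point requiring a little care is the very first one --- producing a common sub-neighbourhood that is legitimately tame and has $K$ as a spine. Taking $B$ to be an honest regular isolating block (rather than, say, a shrinking of $B_1$ extracted abstractly from its spine structure, whose tameness would not be evident) settles this cleanly, since tameness is built into the definition of an isolating block and the spine property is supplied by Theorem~\ref{teo:spine}. Note also that one need not separately verify that $\partial B$ is homeomorphic to $\partial B_1$ or to $\partial B_2$: this homeomorphism is established inside the proof of Lemma~\ref{lem:concentric}, where the inclusion of the punctured neighbourhoods is shown to induce an isomorphism of cohomology rings and thereby forces the boundary surfaces to be homeomorphic.
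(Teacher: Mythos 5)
Your proof is correct and is essentially the same argument as in the paper: construct a third regular isolating block nested inside the interiors of both $B_1$ and $B_2$, invoke the spine theorem and Lemma~\ref{lem:concentric} twice, and concatenate the resulting ambient isotopies. The only difference is expository polish (you spell out tameness and the boundary-homeomorphism point), which does not change the substance.
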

\begin{proof} Let $B$ and $B'$ be regular isolating blocks for $K$ under the flows $\varphi$ and $\varphi'$. We may find another regular isolating block $B''$ for the flow $\varphi$ and contained in the interiors of both $B$ and $B'$. By the spine theorem $K$ is a spine of these three isolating blocks. Then applying the lemma above we have that $B''$ and $B$ are ambient isotopic, and so are $B''$ and $B'$. Thus (by concatenating isotopies) we have that $B$ and $B'$ are ambient isotopic.
\end{proof}

Of course, the colouring (that is, the entry and exit sets) of a regular isolating block \emph{does} depend on the flow. It is only the underlying manifold and its ambient isotopy class which do not. (One can show that if $B_1$ and $B_2$ are two regular isolating blocks for $K$ for the same flow then there exists an ambient isotopy that carries $B_1$ onto $B_2$ and is colour preserving; i.e. it carries $B_1^i$ and $B_1^o$ onto $B_2^i$ and $B_2^o$. Thus in the handle theorem one can say that, up to a colour preserving ambient isotopy, $N$ can be obtained from any regular isolating block for $K$).

\begin{corollary} \label{cor:regular_tame} Let $K$ be a tame set. Then any regular isolating block for $K$ is ambient isotopic to a regular neighbourhood for $K$ in the sense of piecewise linear topology.
\end{corollary}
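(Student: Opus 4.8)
The plan is to exhibit one piecewise-linear regular neighbourhood of $K$ that has $K$ as a spine in the sense of Theorem~\ref{teo:spine}, and then let Lemma~\ref{lem:concentric} do the rest.

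First I would reduce to the polyhedral case. Since $K$ is tame there is a homeomorphism $h$ of the ambient manifold onto itself taking $K$ onto a polyhedron; transporting the flow, one sees that it suffices to prove the corollary when $K$ is itself a polyhedron, which I assume henceforth. Recall also that, by the regular neighbourhood theorem, any two regular neighbourhoods of $K$ are ambient isotopic relative to $K$, so it is enough to treat a single, conveniently chosen one. Now let $B$ be an arbitrary regular isolating block for $K$. Choose a piecewise-linear regular neighbourhood $R$ of $K$ small enough that $R \subseteq {\rm Int}\ B$; it is a tame compact $3$--manifold with $K$ in its interior. The crucial input is the mapping cylinder neighbourhood theorem: a regular neighbourhood of a compact polyhedron is piecewise-linearly homeomorphic to the mapping cylinder of a piecewise-linear map $\partial R \longrightarrow K$ (see e.g. \cite{rourkesanderson1}). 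Removing $K$ from a mapping cylinder leaves $\partial R \times [0,1)$, with $\partial R$ sitting as $\partial R \times \{0\}$; after reparametrising $[0,1) \cong [0,+\infty)$ this says precisely that $K$ is a spine for $R$. By Theorem~\ref{teo:spine}, $K$ is a spine for $B$ as well. Since $R \subseteq {\rm Int}\ B$, Lemma~\ref{lem:concentric} applies and yields that $R$ and $B$ are ambient isotopic relative to $K$. As $B$ was arbitrary and $R$ is ambient isotopic to every regular neighbourhood of $K$, the corollary follows.

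The step I expect to require the most care is the bridge between the piecewise-linear and the purely topological pictures: identifying the regular neighbourhood $R$ as a mapping cylinder neighbourhood and verifying that the ensuing product structure on $R - K$ literally matches the definition of a spine used here. One can sidestep the mapping cylinder neighbourhood theorem at the cost of a little more work, using only machinery developed above. A regular neighbourhood $R$ of $K$ collapses onto $K$, so the inclusion $K \hookrightarrow R$ is a homotopy equivalence and hence a ring isomorphism in \v{C}ech cohomology; since $K$ is tame, $\check{H}^*(K)$ is finitely generated, so $K$ possesses a neighbourhood basis of regular isolating blocks and we may pick one, say $B'$, with $B' \subseteq {\rm Int}\ R \subseteq R \subseteq {\rm Int}\ B$. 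By Lemma~\ref{lem:concentric}, $B'$ and $B$ are concentric, so in particular $\partial B' \cong \partial B$. Moreover the inclusions $K \hookrightarrow R$ and $K \hookrightarrow B$ both induce ring isomorphisms in cohomology (the former as just noted, the latter by the very definition of a regular isolating block), so $R \hookrightarrow B$ does too; by Lefschetz duality and the classification of closed orientable surfaces this forces $\partial R \cong \partial B$. Edwards' theorem, applied to the nested triple $B' \subseteq R \subseteq B$, then shows that $R$ is concentric with $B$, hence ambient isotopic to it, and one concludes as before. (The degenerate case $\partial B = \emptyset$, where $B = K$ is a closed $3$--manifold, is immediate and treated separately.)

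In either route the remaining ingredients are routine: smallness of piecewise-linear regular neighbourhoods, tameness of $R$ (it is a subpolyhedron), and the fact that concentric manifolds are ambient isotopic relative to their common core, which was already used in the proof of Lemma~\ref{lem:concentric}.
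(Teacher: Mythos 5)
Your argument is correct, and it takes a genuinely different route from the paper's. The paper's proof is dynamical: it constructs an auxiliary flow for which $K$ is a stable attractor with the PL regular neighbourhood $N$ as a positively invariant isolating block with transversal boundary, observes that $N$ is then a \emph{regular} isolating block (since $K \hookrightarrow N$ is a homotopy equivalence), and invokes the preceding theorem that any two regular isolating blocks for $K$, even for different flows, are ambient isotopic rel $K$. Your first route instead stays entirely on the PL-topological side: you appeal to the mapping cylinder neighbourhood theorem to see directly that $K$ is a spine of the PL regular neighbourhood $R$, then let Theorem~\ref{teo:spine} supply the same for $B$, and let Lemma~\ref{lem:concentric} finish. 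In effect the paper manufactures a flow so that Theorem~\ref{teo:spine} applies to $R$, whereas you bypass the flow and get the spine structure on $R$ from PL topology alone; the two arguments then converge on Lemma~\ref{lem:concentric}. The paper's route is self-contained given the citation to \cite{mio2} for the flow construction and is the one that interfaces most naturally with the preceding theorem; yours is more economical in that it avoids introducing an extra flow at the cost of invoking an external PL result. Your second route is also sound (note that $B'$ is taken for the original flow, so no new flow is needed there either, and in fact $\partial B \neq \emptyset$ is automatic for a compact $3$--manifold properly contained in $\mathbb{R}^3$, so the degenerate case you flag does not arise in the paper's setting), though it essentially reproduces the cohomology-ring argument already embedded in the proof of Lemma~\ref{lem:concentric} and so buys little over the first route.
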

\begin{proof} After performing an ambient homeomorphism we may take $K$ to be a polyhedron. Let $N$ be a regular neighbourhood for $K$ in the sense of piecewise linear topology. It is then very easy to construct a flow $\varphi$ that has $K$ as a stable attractor with $N$ being a positively invariant neighbourhood of $K$ and such that the flow crosses the boundary of $N$ transversally (see \cite[Proposition 12, p. 6169]{mio2} for example). Since $K \subseteq N$ is a homotopy equivalence, $N$ is in particular a regular isolating block for $K$ and by the previous theorem any regular isolating block for $K$ is ambient isotopic to $N$.
\end{proof}

\begin{example} \label{ex:reg_dyn_pl} A regular isolating block for a (tame) graph of genus $g$ is a handlebody of genus $g$. In particular, a regular isolating block for a fixed point is a ball and a regular isolating block for a tame invariant knot is a solid torus having the knot as its core.
\end{example}

\section{Proving the existence of invariant structure from the presence of invariant knots} \label{sec:proof}

We come back to the initial motivation of using invariant knots $K$ in some region $N$ to establish the existence of additional invariant structure in $N$. We first prove Theorem \ref{thm:detect} below which is a particular case of Theorem \ref{thm:detect1} but contains the geometric essence of the argument. Then we build on it to prove Theorem \ref{thm:detect1}. Instead of knots we consider a wider class of compacta, called knot-like.

\subsection{} \label{subsec:detect} A knot-like compactum $K \subseteq \mathbb{R}^3$ is a set having a neighbourhood basis of tame concentric solid tori $\{T_k\}$. This implies that $K$ is connected and $\check{H}^1(K) = \mathbb{Z}$. A knot-like compactum has a well defined knot type as follows. Observe that if $\{T'_{\ell}\}$ is another nested neighbourhood basis of $K$ comprised of (tame) solid tori, by interlacing them with the $T_k$ it follows from the concentricity theorem of Edwards quoted above that the $T'_{\ell}$ will be concentric with the $T_k$ (and therefore also among themselves) for large enough $\ell$. Thus we may correctly define the knot type of $K$ as the knot type of a polygonal core curve $c$ of any of the solid tori $T_k$, or of any $T'_{\ell}$ for large enough $\ell$. We call $c$ a polygonal model of $K$. Any tame knot is evidently knot-like and the notion of knot type just defined coincides with the usual one for tame knots. If $K_1, \ldots, K_r$ are disjoint knot-like compacta, we call $K = K_1 \cup \ldots \cup K_r$ a link of knot-like compacta. Its polygonal model is defined in the obvious way.

\begin{theorem} \label{thm:detect} Let $N \subseteq \mathbb{R}^3$ be an isolating block that contains an invariant link $K$ of knot-like sets. Assume that $K$ is contractible and nontrivial in $N$. Then every neighbourhood $U$ of $K$ contains a point $p \in U - K$ such that the full trajectory of $p$ is contained in $N$.
\end{theorem}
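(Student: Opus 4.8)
The plan is to argue by contradiction, assuming that some neighbourhood $U_0$ of $K$ contains no point of $U_0 - K$ whose full trajectory stays in $N$. This means that $\mathrm{Inv}(N) = K \uplus K'$ for some compact invariant $K'$ disjoint from $K$ (possibly empty), and moreover $K$ is isolated with an isolating neighbourhood as small as we like inside $U_0$. Since $K$ is a link of knot-like compacta, $\check{H}^*(K)$ is finitely generated, so by the machinery of Section~\ref{sec:iblocks} we can pass to a connected isolating block $N$ for $\mathrm{Inv}(N)$ (enlarging/shrinking $N$ does not change the conclusion), and apply the generalized handle theorem, Theorem~\ref{teo:asasgen}, with $K_0 = K$ and $K_1 = K'$. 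This yields a regular isolating block $B_0$ for $K$ (disjoint from) an isolating block $B_1$ for $K'$, both in $\mathrm{int}\,N$, such that $N$ is obtained from $B_0 \cup B_1$ by attaching coloured $1$- and $2$-handles.

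Next I would identify $B_0$ concretely. By Example~\ref{ex:reg_dyn_pl} (via the spine theorem and the concentricity results of Section~\ref{sec:concentric}), a regular isolating block for the knot-like link $K$ is, up to ambient isotopy rel $K$, a disjoint union of tame solid tori, one around each component $K_i$, each having $K_i$ as its core --- hence a disjoint union of solid tori whose cores form a polygonal model $c$ of the link $K$. In particular $B_0$ (together with its $t$-curves, which are meridians or longitudes of these tori) carries all the knot-theoretic information of $K$. This is the point where the nontriviality hypothesis enters: since $K$ is nontrivial in $N$, the solid tori $B_0$ cannot be isotoped rel nothing onto disjoint unknotted/unlinked solid tori inside $N$, and since $K$ is contractible in $N$, each $t$-curve of $B_0$ (being a meridian or a longitude isotopic to a core) bounds in $N$ in the appropriate homological sense.

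**The heart of the argument, and the main obstacle, is the coloured-handle obstruction.** Having reduced to the statement ``$N$ is obtained from $B_0 \cup B_1$ by attaching coloured $1$- and $2$-handles'', I would derive a contradiction from the combination (i) $B_0$ is a union of knotted/linked solid tori realizing $K$, (ii) $K$ is contractible in $N$, (iii) $K$ is nontrivial. The plan is to track what handle attachments do to the $t$-curves and to the way the solid-torus factors sit homotopically in the growing manifold, using Remark~\ref{rem:thom} (effect of $1$-handles on $H_1$ and on $t$-curve classes) and the analogous bookkeeping for $2$-handles (a $2$-handle kills the homology class of its attaching $t$-curve). Contractibility of $K$ in $N$ forces the meridians of $B_0$ to die in $H_1(N)$; this can only happen if, along the sequence of handle attachments, a $2$-handle is pasted along a $t$-curve homologous to a meridian, i.e. one effectively caps off a meridian disk of one of the solid tori. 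But a coloured $2$-handle attached along a meridian $t$-curve of a solid torus, by the very colouring constraints (the annulus $A$ along $\tau$ must have its white half matching and its gray half matching), produces a disk in $N$ bounded by the core of that solid torus --- contradicting nontriviality of $K$ in $N$. Making this precise is where the coloured handle theorem's full strength, and the colouring constraints (which rule out ``twisted'' attachments and force the attaching disks/annuli to straddle the $t$-curves in a fixed way), are essential; the bulk of the work is a careful geometric/homological analysis showing that no admissible sequence of coloured handle attachments can both make $K$ contractible and keep it knotted.

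I would then conclude: since the assumption leads to a contradiction, there must exist $p \in U - K$ with full trajectory in $N$, for every neighbourhood $U$ of $K$. (This is Theorem~\ref{thm:detect}; Theorem~\ref{thm:detect1} follows by first replacing the index pair region $\overline{Q-Q_0}$ by an isolating block around $\mathrm{Inv}$, using the block-construction of Section~\ref{sec:iblocks}, and noting that the conclusion transfers back.)
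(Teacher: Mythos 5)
Your overall architecture matches the paper's: argue by contradiction, split ${\rm Inv}\,N = K \uplus K'$, invoke Theorem~\ref{teo:asasgen} to get a regular isolating block $T$ for $K$ (a union of solid tori with cores forming a polygonal model of $K$) plus an isolating block $B$ for $K'$, with $N$ built from $T \cup B$ by coloured handles, and then derive a contradiction from contractibility plus nontriviality. Up to the point where you reach the handle decomposition, your proposal is the paper's proof.

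The gap is in the final step, and it is twofold. First, there is a meridian/longitude confusion that undermines the argument. It is not the meridians that must die in $H_1(N)$ (meridians of a solid torus already bound disks inside the torus); contractibility of $K$ forces the \emph{cores} $c_i$ (longitudes) to become nullhomologous in $N$. Correspondingly, the relevant $t$-curves are not ``meridian-like'': by Lemma~\ref{lem:tN0} the $t$-curves of a coloured solid torus are either zero or of the form $\pm mc$ in $H_1(T)$, and the crucial homological bookkeeping (Lemma~\ref{lem:curve}, using Remark~\ref{rem:thom}) is precisely to show that $c=0$ in $H_1(N)$ forces $m=1$, i.e.\ one of the original $t$-curves of $T$ is \emph{parallel to the core}. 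A $2$-handle attached along a meridian-type $t$-curve does not help here; in fact the paper's Figure~\ref{fig:counter} example shows that the coloured $2$-handle attached to a meridian $t$-curve yields a manifold in which $c$ is \emph{not} nullhomologous.

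Second, your mechanism for producing the offending disk is not the correct one. You propose to read off a disk for the core directly from the $2$-handle. But the $2$-handle is attached along a $t$-curve $\tau'$ of the intermediate manifold $N^{(1)}$ (after $1$-handles), whose class is $kmc + x + y$ in the decomposition of Remark~\ref{rem:thom}; the disk it provides bounds $\tau'$, not $c_i$, and passing from one to the other is exactly the missing work. The paper's route is different and cleaner: having found (via Lemmas~\ref{lem:curve} and~\ref{lem:boundary}) a link $\cup\tau_i \subseteq P \subseteq \partial N$ with each $\tau_i$ cobounding an annulus with $c_i$, it observes that each $\tau_i$ is contractible in $N$ (freely homotopic to the contractible $c_i$), invokes Dehn's lemma to obtain disjoint embedded disks $D_i$ with $\partial D_i = \tau_i$, and then ambient-isotopes $c_i$ across the annuli onto $\tau_i$ to conclude that $\cup c_i$ is a trivial link — the contradiction. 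Dehn's lemma, absent from your sketch, is essential; without it you only have singular disks, which do not contradict knot-theoretic nontriviality.
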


Of course, the assumptions about $K$ mean that its polygonal model is contractible and nontrivial in $N$.

To illustrate why knot-like compacta may be useful, suppose one observes the following invariant structure inside $N$: a nontrivial knot $K$, a fixed point $q$, and an orbit $\gamma$ connecting the two; i.e. its $\alpha$-limit is $q$ and its $\omega$-limit is contained in $K$. It might spiral towards $K$ or perhaps converge to some fixed point in it; that does not affect the argument. See Figure \ref{fig:knotlike}.(a) or (b). If we apply Theorem \ref{thm:detect} to $K$ we cannot detect any new invariant structure since any point $p \in \gamma$ satisfies the conclusion of the theorem for any $U$ and we already knew about $\gamma$. However, we can also apply the theorem to the knot-like invariant set $K \cup \gamma \cup q$ (which has the same knot type as $K$, hence nontrivial) and deduce that there exist full trajectories of the flow in $N$ disjoint from $K \cup \gamma \cup q$ and passing arbitrarily close to it. Now these orbits are different from $\gamma$ and so we do gain new information.

\begin{figure}[h!]
\null\hfill
\subfigure[]{
\begin{pspicture}(0,0)(3.2,4)
	%\psgrid(0,0)(3.2,4)
	\rput[bl](0,0){\includegraphics{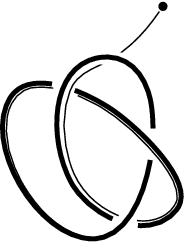}}
\end{pspicture}}
\hfill
\subfigure[]{
\begin{pspicture}(0,0)(3.8,4)
	%\psgrid(0,0)(3.8,4)
	\rput[bl](0,0){\includegraphics{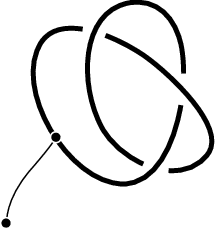}}
\end{pspicture}}
\hfill\null
\caption{ \label{fig:knotlike}}
\end{figure}

The proof of Theorem \ref{thm:detect} needs some previous lemmas.

\begin{lemma} \label{lem:identify} Let $K$ be an isolated invariant set which is a link of knot-like sets $K_i$. Then any regular isolating block $T$ for $K$ is a disjoint union of solid tori $T_i$, each along $K_i$.
\end{lemma}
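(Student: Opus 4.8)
\textbf{Proof plan for Lemma \ref{lem:identify}.}

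The plan is to exploit the rigidity theorem for regular isolating blocks proven just above: any two regular isolating blocks for the same $K$ (even for different flows) are ambient isotopic relative to $K$. So it suffices to exhibit \emph{one} regular isolating block for $K$ which is a disjoint union of solid tori along the components $K_i$, and then the given $T$ will be ambient isotopic to it and hence itself such a disjoint union. The natural candidate is built directly from the neighbourhood bases of concentric solid tori that come with the definition of a link of knot-like sets.

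First I would work one component at a time. Fix $K_i$ and take a tame solid torus $T_i^{(0)}$ from its defining neighbourhood basis; shrinking if necessary we may assume the $T_i^{(0)}$ are pairwise disjoint and each lies in ${\rm int}\ N$, so $T^{(0)} := \bigsqcup_i T_i^{(0)}$ is a compact tame $3$--manifold neighbourhood of $K$, each component a solid torus along a $K_i$. Next I would equip $T^{(0)}$ with the structure of an isolating block. This is not automatic, since the flow need not be transverse to $\partial T^{(0)}$; but by the construction recorded on page \pageref{pg:constructiblock} (the ``woodworking'' that produces a basis of isolating blocks) one can build an isolating block $B$ for $K$ contained in any prescribed isolating neighbourhood of $K$ and, conversely, containing any prescribed compact subset of the interior of that neighbourhood. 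Using this with $N$ as the ambient isolating neighbourhood and a slightly smaller concentric solid torus configuration $T^{(1)} \subseteq {\rm Int}\ T^{(0)}$ as the set to be contained, I obtain an isolating block $B$ for $K$ with $T^{(1)} \subseteq {\rm Int}\ B \subseteq B \subseteq {\rm Int}\ T^{(0)}$. Because $B$ is sandwiched between two concentric solid-torus configurations, each component of $B$ lies in exactly one component of $T^{(0)}$, is connected, and is itself concentric with (hence homeomorphic to) the solid torus around it by Edwards' concentricity theorem; so $B$ is a disjoint union of solid tori along the $K_i$. Moreover $K_i \hookrightarrow$ (its solid torus component of $B$) is a homotopy equivalence, so $B$ is a regular isolating block for $K$.

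Having produced one regular isolating block of the desired shape, I apply the theorem that any two regular isolating blocks for $K$ are ambient isotopic relative to $K$: the given $T$ is ambient isotopic (rel $K$) to $B$, and since ambient isotopy preserves the homeomorphism type and the decomposition into components matched with the $K_i$, the block $T$ is also a disjoint union of solid tori $T_i$, each a solid torus neighbourhood of $K_i$ with $K_i$ as a spine (hence with core curve of the knot type of $K_i$, by Example \ref{ex:reg_dyn_pl}). The main obstacle is the middle step: verifying that the isolating block $B$ extracted from the woodworking construction really does inherit the solid-torus structure from the surrounding concentric tori. The point to be careful about is that $B$ need not be one of the $T_i^{(k)}$ from the basis — it is some possibly wiggly neighbourhood trapped between $T^{(1)}$ and $T^{(0)}$ — so I must invoke Edwards' theorem (as quoted before Lemma \ref{lem:concentric}), applied componentwise, to conclude that a compact tame $3$--manifold caught between two concentric solid tori is itself a concentric solid torus; the hypotheses of that theorem (tame imbeddings, homeomorphic boundaries) need to be checked, the boundary-matching being the only slightly delicate point and following from the fact that $K_i$ is a spine of each block in sight.
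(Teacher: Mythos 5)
Your overall plan is a legitimate alternative route — manufacture a model regular block that is a union of solid tori, then transfer the conclusion to $T$ by the rigidity theorem — but the construction of the model, as you set it up, has a real gap at exactly the step you flag as ``the only slightly delicate point.'' To apply Edwards' concentricity theorem to the sandwiching $T_i^{(1)} \subseteq B_i \subseteq T_i^{(0)}$, you need $\partial B_i$ to be homeomorphic to $\partial T_i^{(0)}$, i.e. a torus. This does \emph{not} follow from the sandwiching: a compact $3$--manifold caught between two concentric solid tori can have boundary of arbitrary genus (take $T_i^{(1)}$ and attach a thin solid handle to its boundary, staying inside $T_i^{(0)}$). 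You justify the boundary match by saying $K_i$ ``is a spine of each block in sight,'' but $K_i$ being a spine of $B_i$ is precisely what Theorem \ref{teo:spine} would give you \emph{if} you already knew $B_i$ is a regular isolating block for $K_i$ — yet regularity of $B$ is something you deduce at the end, from the solid-torus structure. This is circular. The woodworking on page \pageref{pg:constructiblock} produces an isolating block containing a prescribed compact set, but not a \emph{regular} one; regularity requires the trimming of Theorem \ref{teo:asas}, and after that trimming the nesting $T^{(1)} \subseteq {\rm Int}\ B$ is no longer controlled.

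Once you do arrange for $B$ to be a regular isolating block for $K$, the sandwiching and Edwards' theorem become superfluous: at that point Theorem \ref{teo:spine} gives that $K_i$ is a spine of $B_i$, concentricity of the defining basis gives that $K_i$ is a spine of $T_{ki}$, and Lemma \ref{lem:concentric} directly yields that $B_i$ is concentric with a solid torus. But then you may just as well run this argument on $T$ itself rather than on a model block, which is precisely what the paper does — $T$ regular implies each component $T_i$ is regular for $K_i$, hence has $K_i$ as a spine, hence is concentric with the basis tori by Lemma \ref{lem:concentric}. The detour through a model block and the rigidity theorem adds nothing. So: either fill the gap by establishing regularity of $B$ first (in which case drop the sandwiching and just invoke Theorem \ref{teo:spine} and Lemma \ref{lem:concentric}, applied directly to $T$), or expect the Edwards step to fail as written.
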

\begin{proof} Since $T$ is a regular isolating block for $K$, it consists of components $T_i$ each of which is a regular isolating block for $K_i$. On the other hand each $K_i$, being knot-like, has a neighbourhood basis $\{T_{ki}\}$ of concentric solid tori. Concentricity clearly implies that $K_i$ is a spine of each $T_{ki}$; since it is also a spine of $T_i$ by Theorem \ref{teo:spine} it follows from Lemma \ref{lem:concentric} that the $T_{ki}$ and $T_i$ are concentric. In particular each $T_i$ is a solid torus and its core curve has the knot type of $K_i$.   
\end{proof}

The following lemmas are concerned with coloured manifolds, with no dynamics involved. All objects are assumed to be polyhedral.

\begin{lemma} \label{lem:tN0} Let $T$ be a coloured solid torus with a core curve $c$. There exists an integer $m \geq 1$ such that each $t$--curve of $T$ is either zero or $\pm mc$ in $H_1(T)$.
\end{lemma}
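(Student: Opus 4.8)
The plan is to understand the boundary surface $\partial T$ and how the $t$--curves sit inside it, then read off their homology classes in $H_1(T)\cong\mathbb{Z}$, identifying the latter with $\mathbb{Z}\langle c\rangle$. Since $T$ is a solid torus, $\partial T$ is a torus and the inclusion $\partial T\hookrightarrow T$ kills exactly the subgroup generated by a meridian; concretely, if $\lambda$ is a longitude (isotopic to the core $c$) and $\mu$ a meridian, then $H_1(\partial T)=\mathbb{Z}\langle\mu\rangle\oplus\mathbb{Z}\langle\lambda\rangle$ and the induced map sends $\mu\mapsto 0$, $\lambda\mapsto c$. So the homology class in $H_1(T)$ of any simple closed curve $\gamma\subseteq\partial T$ is determined by the coefficient of $\lambda$ when $[\gamma]$ is written in this basis, i.e. by the number of times $\gamma$ wraps longitudinally. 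The first step is to recall (Rolfsen \cite{rolfsen1}) that a simple closed curve on a torus is either null-homotopic (hence bounds a disk on the torus, contributing $0$) or is a $(p,q)$--torus curve with $\gcd(p,q)=1$, which wraps the longitude direction exactly $q$ times; thus a single such curve represents $0$ or $\pm q\,c$ for some $q$ coprime to its meridional winding.

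The key point is that the $t$--curves are not an arbitrary single curve but a disjoint union of simple closed curves that together bound the white subsurface $P=N^i$ (equivalently the grey subsurface $Q=N^o$) inside $\partial T$. So next I would use the fact that the $t$--curves $\tau_1,\dots,\tau_k$, being the common boundary $\partial P=\partial Q$, form the boundary of an embedded compact subsurface of the torus. Two mutually disjoint essential simple closed curves on a torus are necessarily parallel (same slope), since if they had different slopes they would have nonzero algebraic, hence geometric, intersection number; this is again elementary and in Rolfsen. Therefore all the essential $t$--curves have a common slope $(p,q)$, and so each $t$--curve is either null-homotopic (class $0$) or has class $\pm q\,c$ in $H_1(T)$, with $m:=|q|$ the common longitudinal winding number. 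The inessential $t$--curves contribute $0=0\cdot m c$. If \emph{every} $t$--curve is inessential we may take $m=1$ (any value $\geq 1$ works vacuously), which is why the statement allows $m\geq 1$ rather than forcing a specific value.

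The main obstacle is making rigorous the step ``disjoint essential simple closed curves on the torus are parallel,'' in the precise form needed here: a priori the $t$--curves could include several parallel copies \emph{together with} some inessential ones, and I must ensure the essential ones all have the \emph{same} slope (not merely pairwise nonzero intersection ruled out). The clean way is to invoke that $\partial P$ separates $\partial T$ into $P$ and $Q$; colour $P$ and $Q$ lie on opposite sides of each $t$--curve. Cutting the torus along a maximal collection of pairwise non-parallel disjoint essential curves yields annuli or a single torus-with-holes, and a short case analysis of which configurations can arise as $\partial P$ pins down that all essential $t$--curves share one slope. Alternatively, and perhaps more economically, one can argue homologically: in $H_1(\partial T)$ the sum $\sum[\tau_i]$ (with the boundary orientation from $P$) equals $0$ because it bounds $P$; combined with the observation that each $[\tau_i]$ is a primitive-or-zero class and they are realized by disjoint curves, one deduces they are all integer multiples of a single primitive class. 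Either route is routine once set up; I would write the slope/parallelism argument explicitly since it is the crux, and leave the torus-homology bookkeeping to a sentence.
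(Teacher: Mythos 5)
Your plan is essentially the paper's argument: reduce the lemma to the fact that disjoint essential simple closed curves on the torus $\partial T$ share a slope, observe that the inclusion $\partial T \hookrightarrow T$ sends meridians to $0$ and longitudes to $c$, and conclude. The paper executes the parallelism step by cutting $\partial T$ along one $t$--curve that is nonzero in $H_1(T)$ (getting an annulus in which the remaining $t$--curves either bound disks or run parallel to the boundary); you instead quote the intersection-number argument, which is an equally elementary route to the same conclusion, and you even mention the cutting argument as an alternative.

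There is one small but genuine slip. You set $m := |q|$ where $(p,q)$ is the common slope of the essential $t$--curves, and you only handle the degenerate case ``all $t$--curves inessential'' (where you correctly take $m=1$). But there is a second degenerate case you do not treat: all essential $t$--curves could be meridians, i.e.\ have slope $(\pm 1, 0)$, so $q = 0$. These curves are essential on the torus $\partial T$ yet null-homologous in $T$, so $m = |q| = 0$, which violates the requirement $m\geq 1$. (Note that a meridian cannot coexist disjointly with an essential curve of nonzero longitudinal winding, so this really is a separate global case.) The fix is cheap and is exactly how the paper starts: first check whether \emph{every} $t$--curve is already zero in $H_1(T)$ --- which subsumes both the all-inessential and the all-meridian cases --- and in that event take $m=1$ arbitrarily; only if some $t$--curve is nonzero in $H_1(T)$ do you read $m$ off from it. I would rephrase your case split to be on ``zero vs.\ nonzero in $H_1(T)$'' rather than ``inessential vs.\ essential on $\partial T$,'' since the two distinctions do not coincide.
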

\begin{proof} If all $t$--curves are zero in $H_1(T)$ we are finished. Assume some $t$--curve $\tau$ is nonzero in $H_1(T)$, so of the form $\pm mc$ with $m \geq 1$. In particular $\tau$ is nonzero in $H_1(\partial T)$ and so cutting $\partial T$ along it produces an annulus which contains all the remaining $t$--curves. Clearly each of these will then either bound a disk or be parallel to the boundary of the annulus. Thus back in $\partial T$, every $t$--curve either bounds a disk or is parallel to $\tau$. Hence their homology class in $H_1(T)$ is either zero or the same as $\tau$ (up to a sign); i.e. of the form $\pm mc$.
\end{proof}

\begin{lemma} \label{lem:curve} Let $T$ be a coloured solid torus with a core curve $c$. Let $B$ be another coloured manifold (disjoint from $T$) and suppose $N$ is obtained from $T \cup B$ by pasting coloured handles onto it. Assume $c = 0$ in $H_1(N)$. Then there exists a $t$--curve in $T$ which is parallel to $c$; i.e. they cobound an annulus in $T$.
\end{lemma}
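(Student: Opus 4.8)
\emph{Plan.} I would argue the contrapositive: assuming that no $t$--curve of $T$ is parallel to $c$, I will show $[c]\neq 0$ in $H_1(N)$. The mechanism is to construct, one handle at a time, a homomorphism out of the first homology group that sends $c$ to a generator but kills the class of every $t$--curve, and to check that such a homomorphism survives each coloured handle attachment; at the end it witnesses $[c]\neq 0$.

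\emph{Step 1 (reduction).} First I would pin down which $t$--curves are parallel to $c$. A $t$--curve $\tau$ is a simple closed curve on the torus $\partial T$: if it is inessential it bounds a disk in $\partial T$ and $[\tau]=0$ in $H_1(T)$, and if it is essential its class in $H_1(\partial T)=\mathbb{Z}\mu\oplus\mathbb{Z}\lambda$ (meridian $\mu$, longitude $\lambda$ chosen with $[\lambda]=c$) is primitive, say $[\tau]=p\mu+q\lambda$ with $\gcd(p,q)=1$, so $[\tau]=qc$ in $H_1(T)=\mathbb{Z}c$. Hence $[\tau]=\pm c$ precisely when $\tau$ is a $(p,\pm1)$--curve, i.e.\ a longitude; any longitude of a solid torus may be contracted radially toward $c$ and so cobounds an annulus with it, while conversely a $t$--curve cobounding an annulus with $c$ is homologous to $\pm c$. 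Therefore ``no $t$--curve parallel to $c$'' is equivalent to ``no $t$--curve has class $\pm c$'', and by Lemma \ref{lem:tN0} this leaves exactly two possibilities: either every $t$--curve of $T$ is null--homologous, or the nonzero ones all have class $\pm mc$ for a common $m\ge 2$. I set $A:=\mathbb{Z}$ in the first case and $A:=\mathbb{Z}/m\mathbb{Z}$ in the second.

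\emph{Step 2 (tracking through the handles).} Write the construction of $N$ as a finite chain $M_0:=T\sqcup B\subseteq M_1\subseteq\cdots\subseteq M_\ell=N$, each $M_{j+1}$ obtained from $M_j$ by attaching a single coloured $1$-- or $2$--handle (the order is irrelevant for the argument). By induction on $j$ I would build homomorphisms $\phi_j:H_1(M_j)\to A$ with: (a) $\phi_j$ sends the image of $c$ to a generator of $A$; (b) $\phi_j$ vanishes on the image of $H_1(B)$; (c) $\phi_j$ vanishes on the class of every $t$--curve of $M_j$. For $j=0$, $H_1(M_0)=\mathbb{Z}c\oplus H_1(B)$ and I put $\phi_0(c)=1$, $\phi_0|_{H_1(B)}=0$; (a),(b) are immediate and (c) holds since the $t$--curves of $T$ have class $0$ or $\pm mc\mapsto 0$ in $A$, and those of $B$ lie in $H_1(B)$. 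For the inductive step I invoke Remark \ref{rem:thom}: a $1$--handle between two components leaves $H_1$ unchanged and replaces two $t$--curves $\tau_1,\tau_2$ by one of class $[\tau_1]+[\tau_2]$; a $1$--handle on one component gives $H_1(M_{j+1})=H_1(M_j)\oplus\mathbb{Z}h$ and replaces a $t$--curve $\tau$ by two, of classes $h$ and $[\tau]-h$; in both cases, defining $\phi_{j+1}$ to agree with $\phi_j$ and to kill $h$ preserves (a)--(c). For a $2$--handle attached along a $t$--curve $\tau$ of $M_j$, a Mayer--Vietoris computation gives $H_1(M_{j+1})=H_1(M_j)/\langle[\tau]\rangle$; since $\phi_j([\tau])=0$ by (c) the map $\phi_j$ descends to $\phi_{j+1}$, the remaining $t$--curves are unchanged, and no homologically essential $t$--curve is created (by the definition of the coloured $2$--handle in Section \ref{sec:coloured}, any new $t$--curve bounds a disk in the handle). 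Thus $\phi_\ell:H_1(N)\to A$ carries $[c]$ to a generator, so $[c]\neq 0$ in $H_1(N)$, contradicting the hypothesis; therefore some $t$--curve of $T$ has class $\pm c$, and the isotopy of Step 1 supplies the required annulus in $T$.

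\emph{Where the difficulty lies.} The delicate point is the inductive step: one must be certain that the homology classes of \emph{all} $t$--curves present at every intermediate stage remain inside $\ker\phi_j$. This is exactly what forces the use of Lemma \ref{lem:tN0} — the ``$c$--content'' of the $t$--curves of $T$ has to be a uniform multiple of a fixed $m$, so that a single target $A$ annihilates every one of them — and what makes the precise transformation rules for $t$--curve classes under $1$--handle attachment in Remark \ref{rem:thom} indispensable. The only other thing to verify is that attaching a coloured $2$--handle introduces no $t$--curve carrying homology, which is read off directly from the construction in Section \ref{sec:coloured}.
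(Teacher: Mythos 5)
Your proof is correct, and it achieves the same conclusion as the paper's while taking a somewhat different (dual) route. The paper first attaches \emph{all} the $1$--handles to obtain an intermediate manifold $N^{(1)}$ with an explicit splitting $H_1(N^{(1)}) = \langle c\rangle \oplus H_1(B) \oplus \langle h_1,\ldots,h_r\rangle$, observes (via Remark \ref{rem:thom} and Lemma \ref{lem:tN0}) that every $t$--curve of $N^{(1)}$ has $\langle c\rangle$--component a multiple of $m$, and then reads off from the quotient $H_1(N) = H_1(N^{(1)})/\langle\text{attaching $t$--curves}\rangle$ that $c\in\ker$ forces $m=1$. You package the same algebra as a witness: a homomorphism $\phi:H_1(N)\to A$ (with $A=\mathbb{Z}$ or $\mathbb{Z}/m$) sending $c$ to a generator and annihilating every $t$--curve class, built inductively handle by handle. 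These are really two formulations of the same direct--sum bookkeeping; both hinge essentially on Lemma \ref{lem:tN0} and the $t$--curve transformation rules of Remark \ref{rem:thom}. What your version buys is (i) a slightly cleaner contrapositive, since the obstruction $\phi$ is produced explicitly rather than extracted from a presentation of $H_1(N)$, and (ii) genuine independence of the order in which $1$-- and $2$--handles are pasted — the paper's proof tacitly uses the ``$1$--handles first'' ordering supplied by Theorem \ref{teo:asas}/\ref{teo:asasgen}, whereas your induction treats each handle on its own and so would apply verbatim to any interleaving. Your Step 1 also spells out, via the primitivity of an essential simple closed curve on $\partial T$, why a $t$--curve with class $\pm c$ must be a longitude and hence cobounds an annulus with $c$; the paper asserts this without comment.
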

\begin{proof} Let $N^{(1)}$ be the coloured manifold obtained after attaching all the $1$--handles, but not the $2$--handles yet, onto $T \cup B$. By Remark \ref{rem:thom} \[H_1(N^{(1)}) = \langle c \rangle \oplus H_1(B) \oplus \langle h_1,\ldots,h_r \rangle. \]

By Lemma \ref{lem:tN0} every $t$--curve in $T$ is either zero or $\pm mc$ in $H_1(T)$, and so the formulae in Remark \ref{rem:thom} show inductively that every $t$--curve in $N^{(1)}$ has an expression of the form $kmc + x + y$ where $x \in H_1(B), y \in \langle h_1,\ldots,h_r \rangle$, and $k \in \mathbb{Z}$. Notice also that if all $t$--curves in $T$ are zero in $H_1(T)$ then $k = 0$.

The homology of $N$ is the quotient of $H_1(N^{(1)})$ by the subgroup generated by the $t$--curves onto which the $2$--handles are attached. All of these have the above form $k_i mc + x_i + y_i$. Since $c = 0$ in $H_1(N)$ we must have (the homology class of) $c$ belong to this subgroup, and this forces that at least one $k_i \neq 0$ and also that $m = 1$. In particular, at least one of the $t$--curves of $T$ has the form $\pm c$ and so cobounds an annulus with $c$.
\end{proof}

\begin{lemma} \label{lem:boundary} Consider a coloured manifold $N_0$ which is a disjoint union $T_1 \cup \ldots \cup T_r \cup B$ where each $T_i$ is a solid torus with a core curve $c_i$. Let $N$ be obtained from $N_0$ by pasting handles and assume each $c_i = 0$ in $H_1(N)$. Let $\partial N = P \cup Q$ be the colouring of $N$. Then there exists a link $\cup \tau_i$ entirely contained in $P$ (similarly, entirely contained in $Q$) which is parallel in $N$ to $\cup c_i$; i.e. there exist disjoint annuli $A_i \subseteq N$ such that each pair of curves $c_i$ and $\tau_i$ cobound $A_i$.
\end{lemma}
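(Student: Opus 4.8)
The plan is to reduce to the single-torus situation already settled in Lemma~\ref{lem:curve} and then to ``read off'' the required curves near $\partial N$.

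First I would apply Lemma~\ref{lem:curve} to each $T_i$ in turn, taking as the auxiliary coloured manifold the disjoint union $B\cup\bigcup_{j\neq i}T_j$ (a coloured manifold disjoint from $T_i$, onto which, together with $T_i$, one attaches the same handles to obtain $N$). Since $c_i=0$ in $H_1(N)$, Lemma~\ref{lem:curve} yields a $t$--curve $\sigma_i$ of $N_0$ lying in $\partial T_i$ which is parallel to $c_i$, cobounding an annulus $A_i'\subseteq T_i$. Because the $T_i$ are pairwise disjoint, so are the curves $\sigma_i$ and the annuli $A_i'$.

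Next I would ``push'' each $\sigma_i$ onto $\partial N$. As a $t$--curve of the coloured manifold $N_0$, $\sigma_i$ has a thin bicollar $\sigma_i\times[-1,1]\subseteq\partial T_i$ whose halves lie in the white part $P_0$ and the black part $Q_0$ of $\partial N_0$; moreover $\sigma_i$ is essential in the torus $\partial T_i$ (it is $\pm c_i\neq 0$ in $H_1(T_i)$). Passing from $N_0$ to $N$ by attaching coloured handles removes from the boundary only a compact subsurface of $\partial N_0$ — a union of small disks (feet of $1$--handles) and thin annuli around $t$--curves (attaching regions of $2$--handles) — and $\partial N$ inherits the colouring of the surviving part. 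Using Lemma~\ref{lem:tN0} (every $t$--curve of the solid torus $T_i$ is inessential in $\partial T_i$ or parallel to $c_i$), one checks that the white region $W_i:=P_0\cap\partial T_i$ still carries, on $\partial N$, a simple closed curve $\tau_i$ parallel to $\sigma_i$ in $\partial T_i$: it is obtained from $\sigma_i$ by sliding it across $W_i$, detouring around the finitely many $1$--handle feet that sit on $\sigma_i$ and, if a $2$--handle is attached along $\sigma_i$ itself (there can be at most one, since it buries $\sigma_i$), continuing across the appropriate face of that handle. Then $\tau_i\subseteq P$, and $A_i'$ together with the sliding region is an annulus in $T_i$ (union at most one $2$--handle, hence in $N$) joining $c_i$ to $\tau_i$; performing the analogous construction on the black side $Q_0\cap\partial T_i$ produces curves in $Q$. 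Disjointness of the two resulting links and of the annuli for different $i$ is automatic, since the $T_i$ — and the handles attached along their boundaries — are pairwise disjoint.

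The main obstacle is the middle assertion of the previous paragraph: verifying that $W_i$ genuinely retains an essential curve after the handle attachments, i.e. that the removed disks and annuli cannot obstruct \emph{every} curve of $W_i$ parallel to $\sigma_i$. This is exactly where Lemma~\ref{lem:tN0} does the work: the annular obstacles inside $W_i$ have cores that are $t$--curves of $T_i$, hence inessential or parallel to $c_i$, so they are either locally removable (when inessential) or can be bypassed on their far side — whose boundary curve, being parallel to $c_i$ and adjacent to a white region of $\partial N$, already lies in $P$. Once this bookkeeping is arranged, the rest of the argument is routine.
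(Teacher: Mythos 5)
Your first step is exactly the paper's: apply Lemma~\ref{lem:curve} to each $T_i$ with $B\cup\bigcup_{j\neq i}T_j$ as the auxiliary manifold, obtaining $t$--curves $\sigma_i\subseteq\partial T_i$ parallel to the $c_i$ and annuli $A_i'\subseteq T_i$ cobounding. Where you diverge is in the second step. You treat the handle attaching regions on $\partial N_0$ as fixed and then try to slide $\sigma_i$ through $W_i=P_0\cap\partial T_i$, detouring around $1$--handle feet, crossing $2$--handle faces, and invoking Lemma~\ref{lem:tN0} to argue that the obstacles can always be bypassed. This line of attack is unnecessarily heavy, and you have not actually carried it out: it is not at all immediate that the detoured curve remains simple, that the resulting annulus from $c_i$ to $\tau_i$ stays embedded when it is forced to pass over a $2$--handle, or that the case analysis (``at most one $2$--handle along $\sigma_i$'', ``annular obstacles with inessential cores are locally removable'') closes up without further interaction effects. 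The phrase ``once this bookkeeping is arranged, the rest of the argument is routine'' is where the gap lives.

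The observation you are missing is the one the paper established in Section~\ref{sec:coloured} precisely to make this kind of argument painless: up to colour-preserving homeomorphism, a coloured handle attachment depends only on the $t$--curve(s) chosen, not on the specific disks or annulus used as attaching region, and in particular these may be taken as small and as thin as one likes. So the correct order of operations is the reverse of yours: first push each $\sigma_i$ a small fixed amount into the white side of $\partial T_i$ to get a simple closed curve $\tau_i\subseteq{\rm Int}\,P_0$, disjoint from $\partial P_0$ (hence not a $t$--curve), and observe that $\cup\tau_i$ is already parallel in $N_0$ to $\cup c_i$ via the annuli $A_i'$ extended slightly into $\partial T_i$; then, only afterwards, choose the attaching disks small enough and the attaching annuli thin enough that all of them miss $\cup\tau_i$. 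Since adding the handles only enlarges the manifold, the $\tau_i$ survive as curves in $P\subseteq\partial N$ and the annuli are untouched. No detours, no passages across $2$--handle faces, no appeal to Lemma~\ref{lem:tN0} in this step. You should restructure the second half of your argument along these lines.
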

\begin{proof} Applying the preceding lemma to the expression $T_i \cup (T_1 \cup \ldots \cup T_{i-1} \cup T_{i+1} \cup \ldots \cup B)$ we see that each $T_i$ contains a $t$--curve which cobounds an annulus with $c_i$ in $T_i$. Push these $t$--curves slightly into the (say) white subset of $\partial T_i$ to obtain the curves $\tau_i$. Clearly the link $\cup c_i$ is parallel in $N_0$ to $\cup \tau_i$. We now paste the handles onto $N_0$ to obtain $N$. Since the handles are all pasted onto disks or annuli centered on $t$--curves, by making these sufficiently small (for the disks) or thin (for the annulus) we may assume without loss of generality that they are all disjoint from the $\tau_i$. Thus the link $\cup \tau_i$ is contained in $P$ (the white subset of $\partial N$) and it is obviously still parallel to $\cup c_i$ since we have only enlarged the manifold.
\end{proof}

We recall a standard result from $3$--manifold theory.  Let $N$ be a compact manifold and $\tau$ a simple closed curve in $\partial N$. If $\tau$ is contractible in $N$, then it bounds an embedded disk $D$ that is properly embedded in $N$; that is, $D \cap \partial N = \partial D$. This is a consequence of Dehn's lemma (\cite[Corollary 2, p. 101]{rolfsen1}). As a generalization of this, suppose $\tau_1, \ldots, \tau_r$ are disjoint simple closed curves in $\partial N$ all of which are contractible in $N$. As we have just seen each $\tau_i$ bounds a properly embedded disk $D_i \subseteq N$, and these can be taken to be mutually disjoint. This can be achieved by a standard procedure so we only describe it very briefly. Consider $D_1$ and $D_2$; after a small perturbation we can assume them to intersect transversally. Thus $D_1 \cap D_2$ is a finite collection of disjoint simple closed curves. Consider these intersection curves as subsets of $D_1$ and pick an innermost one $\alpha$; by this we mean that the closed disk $E_1$ bounded by $\alpha$ in $D_1$ contains no other intersection curve in its interior. Now regard $\alpha$ as a subset of $D_2$, where it bounds a closed disk $E_2$ (which may contain other intersection curves in its interior) and replace $D_2$ with $(D_2 - {\rm int}\ E_2) \cup E_1$. This is a new disk whose boundary is still $\tau_2$ and now intersects $D_1$ along all the remaining intersection curves other than $\alpha$ and also along the whole disk $E_1$. Now a slight push of this disk in a direction normal to $D_1$ (towards the side opposite $E_2$) will also remove $E_1$ from its intersection with $D_1$. Thus $D_1$ and the new $D_2$ now intersect (transversally) along the same curves that they did before minus $\alpha$. Repeating this same process will eventually produce a disk $D_2$ disjoint from $D_1$ and still having $\tau_2$ as its boundary while leaving $D_1$ unchanged. If we have a third disk $D_3$ we continue by first making $D_1$ and $D_3$ disjoint by the same procedure just explained. Then we make $D_2$ and $D_3$ disjoint again by the same procedure and observe that this only involves replacing pieces of $D_3$ with pieces of $D_2$ and, since $D_1$ is disjoint from $D_2$ and $D_3$, these replacements produce a new disk $D_3$ that is still disjoint from $D_1$ (and leave $D_2$ unchanged). This extends in the obvious way to any number of disks.

\begin{proof}[Proof of Theorem \ref{thm:detect}] We reason by contradiction. Denote by ${\rm Inv}\ N$ the maximal invariant subset of $N$. If the thesis does not hold, there is a neighbourhood $U$ of $K$ such that the trajectory of every $p \in U-K$ leaves $N$. In particular, $U \cap {\rm Inv}\ N = K$ and so ${\rm Inv}\ N$ decomposes as a disjoint union $K \uplus K'$ where $K'$ is some other (possibly empty, in principle) isolated invariant set. Applying the generalized handle theorem (Theorem \ref{teo:asasgen}) there exist a regular isolating block $T$ for $K$ and an isolating block $B$ for $K'$ such that $N$ is obtained from $T \cup B$ by pasting handles. (If $K'$ is empty we just apply the ordinary handle theorem).

By Lemma \ref{lem:identify} the block $T$ is a disjoint union of solid tori $T_i$ and their cores $c_i$ form a link which is a polygonal model of $K$. By assumption all $c_i$ are contractible in $N$. In particular they are nullhomologous, so by Lemma \ref{lem:boundary} there exists a link $\cup \tau_i$ in (the white subset of) $\partial N$ such that each pair $c_i, \tau_i$ cobounds an annulus $A_i$ and these annuli are all disjoint. Each $\tau_i$ is contractible in $N$ (being freely homotopic to $c_i$ across the annulus $A_i$) so by Dehn's lemma the $\tau_i$ bound embedded disjoint disks $D_i$ in $N$.

Use a small collar of $\partial N$ to shrink $N$ ever so slightly so that the $c_i$ remain unchanged but the $\tau_i$, $A_i$, and $D_i$ are now all contained in the interior of $N$. There is a homeomorphism $h$ of $N$ such that $h(c_i) = \tau_i$ for each $i$; it can be obtained as the final stage of an isotopy that slides $c_i$ across the annulus $A_i$ until it matches $\tau_i$. Now the $c_i$ bound the embedded disks $h^{-1}(D_i)$ which are all disjoint, and so $\cup c_i$ is the trivial link. This contradicts the assumption about $K$.
\end{proof}

It is worth observing that the fact that our manifolds and handles are coloured plays an essential role in the argument. In fact, the non-coloured version of Lemma \ref{lem:curve} is actually false. As a simple example take $T$ to be the coloured torus of Figure \ref{fig:counter} (and let $B$ be empty) and let $c$ be its core. There is an obvious way to paste a non-coloured $2$--handle onto $T$ to obtain a ball $N$, and $c = 0$ in $H_1(N)$. However, the only $t$--curve in $T$ is not parallel to its core. Thus Lemma \ref{lem:curve} fails under non-coloured handle pastings. On the other hand there is only one way to paste a coloured $2$--handle, and it produces a manifold where $c$ is not homologous to zero.

\begin{figure}[h!]
\begin{pspicture}(0,0)(6.5,2.5)
	%\psgrid(0,0)(6.5,2.5)
	\rput[bl](0,0){\scalebox{0.6}{\includegraphics{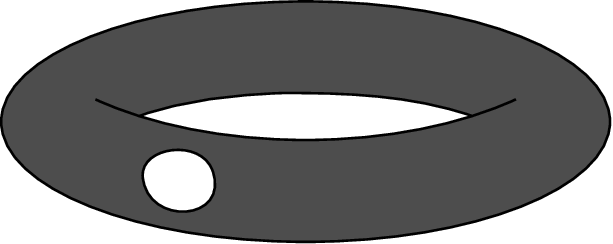}}}	
\end{pspicture}
    \caption{\label{fig:counter}}
\end{figure}

\subsection{} We now head towards a proof of Theorem \ref{thm:detect1} from the Introduction, which we restate here in a slightly more general form (for links of knot-like sets):

\begin{theorem} \label{thm:main} Let $(Q,Q_0)$ be an index pair in $\mathbb{R}^3$. Assume that $\overline{Q-Q_0}$ contains an invariant link $K$ of knot-like sets which is contractbile and nontrivial in its interior. Then every neighbourhood $U$ of $K$ contains a point $p \in U - K$ such that the full trajectory of $p$ is contained in $Q-Q_0$.
\end{theorem}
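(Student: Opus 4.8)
The plan is to deduce Theorem~\ref{thm:main} from Theorem~\ref{thm:detect} by squeezing a genuine isolating block in between $K$ and the (possibly badly behaved) region $\overline{Q-Q_0}$. Write $N_0:=\overline{Q-Q_0}$ and $S:={\rm Inv}(N_0)$. I will use two standard facts about index pairs: $N_0$ is an isolating neighbourhood for $S$, and $S\subseteq{\rm int}(Q-Q_0)$; in particular $S\subseteq Q-Q_0$ and, ${\rm int}\ N_0$ being open, ${\rm Inv}({\rm int}\ N_0)=S$. Since $K$ is invariant and contained in $N_0$ we have $K\subseteq S\subseteq{\rm int}\ N_0$. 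First I would fix once and for all a polygonal model $c$ of $K$ lying close to $K$ (one closed curve per knot-like component) together with a contraction of $c$ carried out inside ${\rm int}\ N_0$, and let $C\subseteq{\rm int}\ N_0$ be its compact image. The point to keep in mind is that nontriviality of $c$ in ${\rm int}\ N_0$ passes automatically to any subregion of ${\rm int}\ N_0$, whereas contractibility of $c$ survives only in subregions that still contain $C$.

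The heart of the argument is the construction of an isolating block $\tilde N$ with
\[ S\cup C\ \subseteq\ {\rm int}\ \tilde N\ \subseteq\ \tilde N\ \subseteq\ {\rm int}\ N_0 . \]
To obtain it I would first choose a compact $3$--manifold neighbourhood $W$ of $S\cup C$ with $W\subseteq{\rm int}\ N_0$; since ${\rm Inv}(W)\subseteq{\rm Inv}({\rm int}\ N_0)=S\subseteq{\rm int}\ W$, this $W$ is already an isolating neighbourhood for $S$ having $C$ in its interior. Then I would run the woodworking of Section~\ref{sec:iblocks} on $W$ --- Churchill's theorem to arrange closed entry and exit sets, followed by the shaving and ``making convex to the flow'' construction described on page~\pageref{pg:constructiblock} --- but carrying out all these modifications inside a sufficiently thin collar of $\partial W$, so that the compact set $C$, which sits well inside $W$, is never touched. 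The enlargement clause recorded on page~\pageref{pg:constructiblock} is exactly what guarantees that the resulting isolating block $\tilde N$ can be taken to contain $C$ (and $S$) in its interior. Finally $\tilde N\subseteq{\rm int}\ N_0$ gives ${\rm Inv}(\tilde N)\subseteq S$, while $S\subseteq{\rm int}\ \tilde N$ gives the reverse inclusion, so ${\rm Inv}(\tilde N)=S$.

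Once $\tilde N$ is built the rest is essentially immediate. It is an isolating block containing the invariant link $K$ of knot-like sets in its interior; its polygonal model $c$ is nontrivial in $\tilde N$ (nontriviality in ${\rm int}\ N_0\supseteq\tilde N$ descends) and contractible in $\tilde N$ (its contraction $C$ lies in $\tilde N$). Thus $\tilde N$ and $K$ satisfy the hypotheses of Theorem~\ref{thm:detect}, which yields, for any prescribed neighbourhood $U$ of $K$, a point $p\in U-K$ with $p\cdot\mathbb{R}\subseteq\tilde N$. Because $\tilde N$ is an isolating neighbourhood with ${\rm Inv}(\tilde N)=S$, the condition $p\cdot\mathbb{R}\subseteq\tilde N$ forces $p\in S$, and hence $p\cdot\mathbb{R}\subseteq S\subseteq Q-Q_0$. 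So $p\in U-K$ has its full trajectory inside $Q-Q_0$, which is precisely the conclusion of Theorem~\ref{thm:main}. (Note that one does not insist $\tilde N\subseteq Q-Q_0$, which could fail since ${\rm int}\ N_0$ may poke slightly out of $Q-Q_0$; routing the final inclusion through ${\rm Inv}(\tilde N)=S\subseteq Q-Q_0$ sidesteps this.)

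The step I expect to be the main obstacle is the construction of $\tilde N$: one must fatten a region around $K$ into an honest isolating block that is at the same time confined to ${\rm int}\ N_0$ (so that nontriviality of $K$ is inherited and ${\rm Inv}(\tilde N)=S$) and large enough to engulf the contraction $C$ (so that contractibility of $K$ is inherited). All the tools needed are already in Section~\ref{sec:iblocks}, but some care is required to carry out the shavings in a thin enough collar of the boundary so as not to encroach on $C$, and to invoke the two index-pair facts ($S\subseteq{\rm int}\ N_0\cap(Q-Q_0)$ and ${\rm Inv}({\rm int}\ N_0)=S$) at the appropriate points.
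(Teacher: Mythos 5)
Your overall strategy is exactly the paper's: reduce to Theorem~\ref{thm:detect} by squeezing an isolating block $\tilde N$ with $S\cup C\subseteq{\rm int}\ \tilde N\subseteq\tilde N\subseteq{\rm int}\ \overline{Q-Q_0}$, and then push the conclusion through ${\rm Inv}(\tilde N)=S\subseteq Q-Q_0$. You have also, very sensibly, flagged the construction of $\tilde N$ as the main obstacle --- and that is precisely where there is a genuine gap.

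The step ``run Churchill's theorem to arrange closed entry and exit sets, carried out inside a thin collar of $\partial W$'' does not deliver what you need. Churchill's theorem produces a \emph{neighbourhood basis} of isolating neighbourhoods of $S$ with closed $N^i$, $N^o$: this guarantees that arbitrarily \emph{small} such neighbourhoods exist, but gives no control from below, so there is no reason any member of the basis should contain the compact set $C$ (the contraction of $K$) in its interior. Having a closed entry/exit set is a global constraint on the behaviour of the flow relative to the frontier, and you cannot enforce it by modifying $W$ only within a collar of $\partial W$: a ``bad'' point $p\in{\rm fr}\ N'$ where $t^o$ fails to be continuous can sit deep inside $W$. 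The enlargement clause on p.~\pageref{pg:constructiblock} that you invoke is also not available yet: it is a statement about trimming an isolating neighbourhood that \emph{already} has closed $N^i$, $N^o$ down to an isolating block; it does not help you produce such an $N$ of the right size in the first place. This is exactly the content of the paper's Proposition~\ref{prop:reduce}, which is the technical heart of the reduction, and its proof cannot be replaced by Churchill's theorem: it relies essentially on the index pair structure, using a Lyapunov function $g$ for $(Q,Q_0)$ to build $N_\alpha=\{g\ge\alpha\}$ (which automatically has closed $N_\alpha^o$ and, for small $\alpha$, engulfs any prescribed compact $D$), and then a second Lyapunov function for a reverse index pair (Proposition~\ref{prop:i_pair}) to close up the entry set without shrinking below $D$. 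Your proof proposal, in effect, discards the index pair hypothesis at precisely the point where it is indispensable. The rest of your argument --- fixing the polygonal model and its contraction in advance, noting that nontriviality descends to subregions while contractibility survives in regions containing $C$, and routing the final inclusion through $S\subseteq Q-Q_0$ rather than through $\tilde N\subseteq Q-Q_0$ --- is correct and matches the paper.
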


We recall the definition of an index pair. It is a compact pair $(Q,Q_0)$ such that:
\begin{itemize}
    \item[(i)] $\overline{Q-Q_0}$ is an isolating neighbourhood.
    \item[(ii)] $Q_0$ is positively invariant in $Q$; i.e. if $p \in Q_0$ and $p \cdot [0,t] \subseteq Q$ then $p \cdot [0,t] \subseteq Q_0$.
    \item[(iii)] $Q_0$ is an exit set for $Q$; i.e. if $p \in Q$ satisfies $p \cdot [0,+\infty) \not\subseteq Q$ there exists $t \geq 0$ such that $p \cdot [0,t] \subseteq Q$ and $p \cdot t \in Q_0$.
\end{itemize}

For example, if $N$ is an isolating neighbourhood with a closed immediate exit set $N^o$ then $(N,N^o) = (Q,Q_0)$ is an index pair. In general, the exit set $Q_0$ of an index pair contains the immediate exit set of $Q$ but can be much bigger. Any isolating neighbourhood $N$ contains an index pair $(Q,Q_0)$ such that $\overline{Q-Q_0}$ contains the maximal invariant subset of $N$ in its interior (see \cite[4.1.D., p. 47]{conley1} or \cite[Theorem 4.3, p. 15]{salamon1}). Moreover, there exist algorithms (\cite[\S 2.5, pp. 459ff.]{mischaikow} and references therein) that, at least for smooth flows, will decide whether a given $N$ is an isolating neighbourhood and, if it is, find an index pair $(Q,Q_0)$. In this sense the criterion given in Theorem \ref{thm:main} is ``computable''.

The proof of Theorem \ref{thm:main} consists in reducing it to Theorem \ref{thm:detect} by finding an isolating block inside $Q - Q_0$ large enough to contain $K$ and the (image of the) homotopy that contracts it to a point. This is the content of Proposition \ref{prop:reduce} below, which needs some preliminary work.

\begin{proposition} \label{prop:i_pair} Let $C$ be an isolating neighbourhood such that:
\begin{itemize}
    \item[(i)] $C^o$ is closed (thus $(C,C^o)$ is an index pair).
    \item[(ii)] $\overline{C^i} \cap C^o = \emptyset$.
\end{itemize}

Then there exists a subset $E \subseteq {\rm fr}\ C$ such that $(C,E)$ is an index pair for the reverse flow. Moreover, $E \cap C^o = \emptyset$.
\end{proposition}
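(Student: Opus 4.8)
`\textbf{Proof proposal.}`

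\textbf{Proof proposal.} The plan is to write $E$ down explicitly as a ``saturation'' of a collar of $C^i$ and then check the three defining properties of an index pair for the reverse flow one at a time. Put $W:=\overline{C^i}$; by hypothesis (ii) this is a compact subset of ${\rm fr}\ C$, it is disjoint from $C^o$, and it contains $C^i$. Define
\[
E:=\{\,p\in C:\ p\cdot[0,s]\subseteq C\ \text{and}\ p\cdot s\in W\ \text{for some}\ s\geq 0\,\},
\]
the set of points whose forward trajectory reaches $W$ without having left $C$ in between. Since $C^o$ is closed we may apply Proposition \ref{prop:structure} to $C$; as $W\subseteq{\rm fr}\ C$ and $W\cap C^o=\emptyset$, any witnessing time $s$ must lie in the initial, boundary-travelling phase of the maximal trajectory segment of $p$, so in fact
\[
E=\{\,p\in C:\ p\cdot[0,s]\subseteq{\rm fr}\ C\ \text{and}\ p\cdot s\in W\ \text{for some}\ s\geq 0\,\}.
\]
In particular $E\subseteq{\rm fr}\ C$, which is the containment claimed in the statement, and $E\cap C^o=\emptyset$, because a point of $C^o$ has $t^o=0$, which forces $s=0$ and hence would put the point into $W$.

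Next I would verify the index-pair axioms for the reverse flow. (a) $\overline{C-E}$ is an isolating neighbourhood: since $E\subseteq{\rm fr}\ C$ we have ${\rm int}\ C\subseteq C-E\subseteq\overline{C-E}\subseteq C$, so ${\rm Inv}(\overline{C-E})={\rm Inv}(C)=K$ and $K\subseteq{\rm int}\ C\subseteq{\rm int}\ \overline{C-E}$. (b) $E$ is positively invariant in $C$ for the reverse flow, i.e.\ negatively invariant for $\varphi$: if $p\in E$ with witness $s$ and $p\cdot[-t,0]\subseteq C$, then for each $r\in[-t,0]$ the point $p\cdot r$ lies in $E$ with witness $s-r$ (because $p\cdot[r,s]\subseteq C$), so $p\cdot[-t,0]\subseteq E$. (c) $E$ is an exit set for $C$ under the reverse flow: if $p\in C$ has $p\cdot(-\infty,0]\not\subseteq C$, then $t^i(p)$ is finite, $\pi^i(p)=p\cdot t^i(p)\in C^i\subseteq W\subseteq E$ and $p\cdot[t^i(p),0]\subseteq C$, so $t:=-t^i(p)\geq 0$ witnesses the required property.

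The one delicate point is that $E$ is closed, so that $(C,E)$ is genuinely a compact pair; this is where hypothesis (i) does the work. Let $q_n\to q$ with $q_n\in E$, witnessed by $s_n$, so $q_n\cdot[0,s_n]\subseteq{\rm fr}\ C$ and $q_n\cdot s_n\in W$. If $\{s_n\}$ has a subsequence converging to a finite $s_\infty$, then by continuity of the flow and closedness of ${\rm fr}\ C$ and of $W$ we get $q\cdot[0,s_\infty]\subseteq{\rm fr}\ C$ and $q\cdot s_\infty\in W$, hence $q\in E$. The remaining possibility, $s_n\to+\infty$, cannot occur: for every fixed $t\geq 0$ one has $q_n\cdot t\in{\rm fr}\ C$ for all large $n$, so $q\cdot[0,+\infty)\subseteq{\rm fr}\ C$ and therefore $q\in C^+$, whence $\emptyset\neq\omega(q)\subseteq K\subseteq{\rm int}\ C$; but $\omega(q)\subseteq\overline{q\cdot[0,+\infty)}\subseteq{\rm fr}\ C$, which is absurd. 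Hence $E$ is closed and $(C,E)$ is the desired index pair for the reverse flow with $E\cap C^o=\emptyset$. The main obstacle is precisely this boundedness-of-witnessing-times argument; everything else is a direct unwinding of the definitions, made possible by Proposition \ref{prop:structure}, i.e.\ by the closedness of $C^o$.
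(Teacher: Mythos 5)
Your proof is correct and follows essentially the same route as the paper: after unwinding the quantifier, your $E$ coincides with the paper's $E=\bigcup_{p\in\overline{C^i}}p\cdot[t^i(p),0]$ (set $q=p\cdot(-s)$), and your checks of negative invariance, $E\subseteq{\rm fr}\ C$ via Proposition \ref{prop:structure}, and $E\cap C^o=\emptyset$ mirror the paper's, while being somewhat more explicit about the three index-pair axioms. The one genuine variation is how closedness of $E$ is obtained. The paper first proves that $t^i$ is bounded on $\overline{C^i}$: if not, some limit point $p\in\overline{C^i}$ would lie in $n^-$, and Proposition \ref{prop:structure} then forces $p\in C^o$, contradicting hypothesis (ii). You instead absorb the unbounded-witness case directly into the limiting argument, obtaining $q\cdot[0,+\infty)\subseteq{\rm fr}\ C$ and reaching a contradiction from $\emptyset\neq\omega(q)\subseteq K\subseteq{\rm int}\ C$. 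Both are sound; the paper's version leans once more on hypothesis (ii), whereas yours leans on the fact that an isolating neighbourhood has its maximal invariant set in its interior. They produce the same conclusion with comparable effort.
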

\begin{proof} We first show that $t^i$ is bounded over $\overline{C^i}$. If this not were the case there would exist a sequence $p_n \subseteq \overline{C^i}$ converging to $p \in \overline{C^i}$ with $t^i(p_n) \rightarrow - \infty$. The latter implies $p \in n^-$, and by Proposition \ref{prop:structure} we have $p \cdot (-\infty,t^o(p)) \subseteq {\rm int}\ C$. In other words, $p \cdot t^o(p)$ is the first time the trajectory of $p$ hits ${\rm fr}\ C$ so in fact $t^o(p) = 0$ and $p \in C^o$. This contradicts the assumption that $\overline{C^i} \cap C^o = \emptyset$.

Define  \[E := \bigcup_{p \in \overline{C^i}} p \cdot [t^i(p),0]\] This is a union of (backwards) trajectory segments in $C$, so it is negatively invariant in $C$. It is straightforward to check that it is closed using the boundedness of $t^i$. Finally, it is contained in ${\rm fr}\ C$. Indeed, if $p \in N^i$ then $p \cdot [t^i(p),0] = \{p\} \subseteq {\rm fr}\ C$ trivially. If $p \in \overline{C^i} - C^i$, we have $t^i(p) < 0$, $t^o(p) > 0$ (because $\overline{C^i} \cap C^o = \emptyset$ by assumption) and $p \in {\rm fr}\ C$. Since $t^i(p) < 0$ there exists $\epsilon > 0$ such that $p \cdot (-\epsilon,0] \subseteq C$; since $p \not\in C^o$ and $p \in {\rm fr}\ C$ by Proposition \ref{prop:structure} we have $p \cdot (t^i(p),0] \subseteq {\rm fr}\ C$.

All of the above implies that $(C,E)$ is an index pair for the reverse flow; notice of course that it isolates the same set as $(C,C^o)$. Finally, since $E$ is a union of negative semitrajectory segments entirely contained in $C$, if it intersects $C^o$ it must do so at a final endpoint of one of those segments; i.e. there exists $p \in \overline{C^i}$ such that $p \cdot 0 = p \in C^o$. However, this contradicts the assumption.
\end{proof}

Let $(Q,Q_0)$ be an index pair. By \cite[Lemma 5.3, p. 20]{salamon1} there exists a (continuous) Lyapunov function $g : Q \longrightarrow [0,1]$ with the following properties: (i) $g(p) = 1$ if and only if $p \in Q^+$ and $\omega(p) \cap Q_0 = \emptyset$; (ii) $g(p) = 0$ exactly on $Q_0$, (iii) if $0 < g(p) < 1$ and $p \cdot [0,t] \subseteq Q$ then $g(p \cdot t) < g(p)$. Such a map $g$ is called a Lyapunov function.

For any $0 < \alpha < 1$ let $N_{\alpha} := \{p \in Q : g(p) \geq \alpha\} \subseteq Q - Q_0$. This is an isolating neighbourhood, and we show that its immediate exit set is $N_{\alpha}^o = \{p \in Q : g(p) = \alpha\}$. Suppose $g(p) = \alpha$. Since $p \not\in Q_0$ there exists $\epsilon > 0$ such that $p \cdot [0,\epsilon] \subseteq Q-Q_0$ and so $g(p \cdot t) < g(p)$ for every $t \in (0,\epsilon)$ by (iii). Thus $p \cdot (0,\epsilon) \cap N_{\alpha} = \emptyset$ so $p \in N_{\alpha}^o$. Similarly, any other point $p \in N_{\alpha}$ with $g(p) > \alpha$ will remain for some time in $Q$ and by continuity of $g$ it will remain in $N_{\alpha}$ so in particular does not belong to $N_{\alpha}^o$.

Clearly $N_{\alpha}^o$ is closed, so $t^o$ and $\pi^o$ are continuous maps. Moreover, every point in $N_{\alpha}^o \cap {\rm int}\ Q$ is a transverse exit point. Indeed, for any such point there exists $\epsilon > 0$ such that $p \cdot (-\epsilon,\epsilon) \subseteq {\rm int}\ Q$ and in particular by (iii) $p \cdot (0,\epsilon) \cap N_{\alpha} = \emptyset$ whereas $p \cdot (-\epsilon,0) \subseteq {\rm int}\ N_{\alpha}$. As a consequence of this $N_{\alpha}^i \cap (N_{\alpha}^o \cap {\rm int}\ Q) = \emptyset$ and therefore also $\overline{N_{\alpha}^i} \cap (N_{\alpha}^o \cap {\rm int}\ Q) = \emptyset$.

We also observe the following. Suppose $p \in {\rm int}\ N_{\alpha}$ eventually exits $N_{\alpha}$. It must do so through $N_{\alpha}^o$, and we claim that it does so through $N_{\alpha}^o \cap {\rm int}\ Q$. To prove this consider $N_{\alpha/2}$, which is slightly bigger than $N_{\alpha}$. The point $p$ must eventually exit it as well at some time $t$, and since $N_{\alpha/2}^o$ is closed we may apply Proposition \ref{prop:structure} and we have $p \cdot (0,t) \subseteq {\rm int}\ N_{\alpha/2}$, so in particular $p \cdot (0,t)  \subseteq {\rm int}\ Q$. Since $p$ must exit $N_{\alpha}$ before it exits the larger set $N_{\alpha/2}$, it does so through  a point in $N_{\alpha}^o \cap {\rm int}\ Q$.

\begin{proposition} \label{prop:reduce} Let $(Q,Q_0)$ be an index pair and $D$ a compact subset in the interior of $\overline{Q - Q_0}$. There exists an isolating neighbourhood $C \subseteq {\rm int}\  \overline{Q - Q_0}$ with closed $C^i$ and $C^o$ that contains $D$ in its interior.
\end{proposition}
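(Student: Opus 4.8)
The plan is to manufacture $C$ as a ``truncated'' version of $Q$, cutting it off both near the exit set $Q_0$ (using the Lyapunov sublevel sets $N_\alpha$ already constructed) and near the entry side (using Proposition \ref{prop:i_pair} for the reverse flow), choosing the truncation parameters small enough that $D$ stays in the interior. Concretely, let $g:Q\to[0,1]$ be the Lyapunov function recalled above and pick $\alpha\in(0,1)$ small enough that $D\subseteq\{p\in Q:g(p)>\alpha\}={\rm int}\,N_\alpha$; this is possible because $D$ is compact, disjoint from $Q_0=\{g=0\}$, and contained in the interior of $\overline{Q-Q_0}$, so $g>0$ on $D$ and a uniform lower bound $\alpha$ exists with room to spare. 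Set $C_1:=N_\alpha$. From the discussion preceding the proposition we already know $C_1$ is an isolating neighbourhood isolating the same invariant set as $(Q,Q_0)$, with closed immediate exit set $C_1^o=\{g=\alpha\}$, that every point of $C_1^o\cap{\rm int}\,Q$ is a transverse exit point, and that $\overline{C_1^i}\cap(C_1^o\cap{\rm int}\,Q)=\emptyset$. The only defect of $C_1$ is that its entry set need not be closed, so a further truncation near the entry side is required; that is exactly what Proposition \ref{prop:i_pair} supplies.

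First I would verify that $C_1$ satisfies the hypotheses of Proposition \ref{prop:i_pair}: hypothesis (i) is immediate since $C_1^o$ is closed, and for hypothesis (ii), $\overline{C_1^i}\cap C_1^o=\emptyset$, I would argue that $C_1^o\subseteq{\rm int}\,Q$ — because a point $p$ with $g(p)=\alpha$ lies in ${\rm int}\,(Q-Q_0)$, hence in ${\rm int}\,Q$ — and then invoke the already-established fact $\overline{C_1^i}\cap(C_1^o\cap{\rm int}\,Q)=\emptyset$. Applying Proposition \ref{prop:i_pair} to $C_1$ produces a closed set $E\subseteq{\rm fr}\,C_1$, disjoint from $C_1^o$, such that $(C_1,E)$ is an index pair for the reverse flow. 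Now run the \emph{same} construction a second time, but for the reverse flow and with the index pair $(C_1,E)$ in place of $(Q,Q_0)$: let $\tilde g$ be a Lyapunov function for the reverse flow on $C_1$ with $\{\tilde g=0\}=E$, choose $\beta\in(0,1)$ small enough that the still-unspoiled set $D$ (which is disjoint from $E\subseteq{\rm fr}\,C_1$, since $D\subseteq{\rm int}\,C_1$) satisfies $D\subseteq\{\tilde g>\beta\}$, and put $C:=\{p\in C_1:\tilde g(p)\ge\beta\}$. By the reverse-flow analogue of the paragraph preceding the proposition, $C$ is an isolating neighbourhood whose immediate entry set (which is the immediate exit set for the reverse flow) is $C^i=\{\tilde g=\beta\}$, hence \emph{closed}, and which isolates the same invariant set.

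It remains to check that $C$ also has a closed immediate exit set and that $C\subseteq{\rm int}\,\overline{Q-Q_0}$ with $D\subseteq{\rm int}\,C$. For the latter, $D\subseteq{\rm int}\,C_1\cap\{\tilde g>\beta\}$ and both sets are open in phase space, so $D\subseteq{\rm int}\,C$; and $C\subseteq C_1=N_\alpha\subseteq\{g\ge\alpha\}\subseteq{\rm int}\,\overline{Q-Q_0}$. For closedness of $C^o$, the point is that passing from $C_1$ to the smaller set $C$ only truncates near the \emph{entry} side: I would show $C^o=C_1^o\cup\{p\in C:\tilde g(p)=\beta,\ t^o_{C}(p)=0\}$ and, more usefully, that $C^o\subseteq C_1^o\cup C^i$ with the two pieces well separated — a point that exits $C$ immediately either was already an immediate exit point of $C_1$ (these form the closed set $C_1^o$) or exits through the newly created wall $\{\tilde g=\beta\}$, which is the \emph{entry} set $C^i$ for the forward flow and contributes no forward-exit points in its interior. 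More cleanly, I expect the cleanest route is: $C^o$ is the fixed-point set of $\pi^o_C$, and $\pi^o_C$ is continuous because $C^o$ is closed — so I should instead directly prove continuity of $t^o_C$, or equivalently closedness of $C^o$, by the same Ważewski-type argument used throughout (a forward-exiting point exits $C$ at the first time it meets $C_1^o$, and since that exit is transverse on ${\rm int}\,Q$ and the entry wall $\{\tilde g=\beta\}$ cannot be an exit wall, $t^o_C$ depends continuously on the initial point). This last verification — bookkeeping which portions of ${\rm fr}\,C$ are entry, exit, or tangency after the double truncation — is the main obstacle; everything else is a mechanical second application of machinery already set up in the excerpt.
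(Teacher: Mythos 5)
Your plan mirrors the paper's overall strategy (Lyapunov sublevel truncation near $Q_0$, then Proposition \ref{prop:i_pair} plus a reverse-flow Lyapunov truncation near the entry side), but it skips a step the paper needs, and the gap is exactly where you claim ``$C_1^o\subseteq{\rm int}\,Q$.'' That claim is false in general. From $g(p)=\alpha>0$ you only get $p\notin Q_0$, not $p\in{\rm int}\,Q$: the set $Q$ is an arbitrary compact set, and $\{g=\alpha\}$ can perfectly well meet ${\rm fr}\,Q$. Concretely, if $Q$ is a ball and $Q_0$ is the closed lower hemisphere of $\partial Q$, then $g>0$ on the upper hemisphere, and for small $\alpha$ the level set $\{g=\alpha\}$ reaches all the way out to a curve on $\partial Q$ near the equator. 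At such a point $p$, the backward orbit may leave $Q$ instantly (there is no entry-side constraint in the definition of an index pair), so $p$ is simultaneously an immediate exit point and an immediate entry point of $N_\alpha$. In that case $\overline{N_\alpha^i}\cap N_\alpha^o\neq\emptyset$, hypothesis (ii) of Proposition \ref{prop:i_pair} fails for $C_1=N_\alpha$, and the subsequent application of that proposition is unjustified.

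This is why the paper inserts an extra trimming step before invoking Proposition \ref{prop:i_pair}: it chooses a compact $P\subseteq N_\alpha^o\cap{\rm int}\,Q$ that is a neighbourhood of $\pi^o(D)\cup n_\alpha^-$ and passes to $C:=\{p\in N_\alpha:\pi^o(p)\in P\}\cup N_\alpha^+$, so that $C^o=P$ is forced away from the problematic corner $\{g=\alpha\}\cap{\rm fr}\,Q$. Only then does one have $\overline{C^i}\cap C^o\subseteq\overline{N_\alpha^i}\cap(N_\alpha^o\cap{\rm int}\,Q)=\emptyset$, which is the statement the preliminary discussion actually establishes. Your argument quietly conflates $N_\alpha^o$ with $N_\alpha^o\cap{\rm int}\,Q$; the fix is to carry out this additional exit-side trimming (taking $P$ large enough to contain $\pi^o(D)$ so that $D$ remains in the interior) before applying Proposition \ref{prop:i_pair}, after which the remainder of your plan (reverse Lyapunov function, choice of $\beta$, closedness of the new entry set, stability of the exit set) goes through as you indicate.
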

\begin{proof} Construct $N_{\alpha}$ as described above choosing $\alpha$ so small that $D$ is contained in its interior. It is straightforward to show that $\pi^o(D) \cup n_{\alpha}^+$ is compact, and by the last observation above it is contained in $N_{\alpha}^o \cap {\rm int}\ Q$. Pick $P$ a compact neighbourhood of $\pi^i(D) \cup n_{\alpha}^+$ contained in $N_{\alpha}^o \cap {\rm int}\ Q$. Construct the new set $C := \{p \in N_{\alpha} : \pi^o(p) \in P\} \cup N_{\alpha}^+$ (this is analogous to the construction of $B_0$ in p. \pageref{pg:constructiblock}). Then $C$ is an isolating neighbourhood and, since it is a union of trajectory segments of $N_{\alpha}$, its entry and exit sets are just the intersection of those of $N_{\alpha}$ with $C$. In particular $C^o = N_{\alpha}^o \cap C = P$. Also, $\overline{C^i} \cap C^o \subseteq \overline{N_{\alpha}^i} \cap P \subseteq \overline{N_{\alpha}^i} \cap (N_{\alpha}^o \cap {\rm int}\ Q) = \emptyset$. Hence we may apply Proposition \ref{prop:i_pair} and find $E \subseteq {\rm fr}\ C$ so that $(C,E)$ is an index pair for the reverse flow. Then using again a Lyapunov function $g'$ for the reverse flow we reduce $C$ sligthly to $C_{\beta} := \{p \in C : g'(p) \geq \beta\}$ which is now an isolating neighbourhood with a closed entry set $C_{\beta}^i$. Notice as well that since $E$ is disjoint from $C^o$, by choosing $\beta$ sufficiently small we can achieve that the part of $C$ (where $g' < \beta$) we remove be disjoint from $C^o$, and so $C_{\beta}^o = C^o$ still.

Notice that by construction $C$ contains $D$ in its interior. By choosing $\beta$ sufficiently small in this last step we can achieve that $C_{\beta}$ still contains $D$ in its interior.
\end{proof}

In particular when the phase space is a $3$--manifold we may then trim $C$ down to an isolating block still containing $D$ in its interior, as explained in p. \pageref{pg:constructiblock}. We can now prove Theorem \ref{thm:detect1}:

\begin{proof}[Proof of Theorem \ref{thm:main}] By assumption there exists a polyhedral model $\cup c_i$ of $K$ that is contractible and nontrivial in the interior of $\overline{Q-Q_0}$. Let $D$ be the union of $K$ and the image of the homotopy that contracts $\cup c_i$ to a point. By the previous discussion there is an isolating block $N$ that contains $D$ in its interior and is contained in the interior of $\overline{Q - Q_0}$. By construction $K$ is contractible in $N$ (the same polyhedral model and the same homotopy work) and evidently it is a nontrivial link in $N$ since it was already nontrivial in the larger set ${\rm int}\ \overline{Q - Q_0}$. Thus Theorem \ref{thm:detect} applies to this isolating block and provides the conclusion.    
\end{proof}

\section{Concluding remarks} \label{sec:concluding}

Let us go back to the very broad problem stated at the beginning of the paper. We are given an isolating block $N$ for some flow and know about some invariant subset $K \subseteq N$; for definiteness let it be a knot. We want to find out if there is more invariant structure in $N$ or, in other words, if $K$ is the maximal invariant subset of $N$. We assume that the behaviour of the flow on $\partial N$ is known, so the colouring of $N$ is known. For the sake of brevity we write $N_0 \leadsto N$ to mean that $N_0$ and $N$ are coloured manifolds and $N$ can be obtained from $N_0$ by pasting $1$-- and $2$--coloured handles onto it.

The basic strategy that we have used in the paper is (an elaboration on) the following. Assume that $K$ is indeed the maximal invariant subset in $N$. The machinery developed in Sections \ref{sec:asas} and \ref{sec:concentric} then shows that, whatever the flow is, there is a coloured solid torus $T$ along $K$ such that $T \leadsto N$. $T$ is known up to ambient isotopy, although in principle its colouring is not known. Then if we manage to show that $T \leadsto N$ cannot be true for any colouring of $T$, we know that there is additional invariant structure in $N$.

The remark that we want to make is that this strategy is ``complete'' in the following sense: if there is a colouring of $T$ such that $T \leadsto N$, then one can construct a flow for which $N$ is an isolating block whose entry and exit sets accord the given colouring of $N$ and which has $K$ as its maximal invariant subset. The proof is not difficult but will be provided elsewhere. Thus, if the only data is the manifold $N$ coloured by the flow and the knot $K$ inside it, deciding whether $T \leadsto N$ (for some colouring of $T$) is completely equivalent to deciding whether one can ensure that there is additional invariant structure in $N$ in general (i.e. for any flow).

Of course, there are variations on this very general outline. For example, in this paper we did not assume that $K$ was the maximal invariant set in $N$ but rather that it was an isolated component of it. Observe also that the method works equally well for any compact set $K$ with a finitely generated cohomology (so that it has regular isolating blocks) and whose regular isolating blocks we can recognize.

These considerations lead to the following purely topological problem which seems interesting in itself: given two coloured manifolds $N_0 \subseteq N$, find necessary and sufficient conditions to ensure that $N_0 \leadsto N$. As part of the necessary conditions presumably one would like to associate to any coloured manifold some magnitudes that are invariant, or at least change in a controlled way, under handle additions. Here are two simple examples.

\begin{example} (1) If $N$ is a coloured manifold with colouring $\partial N = P \cup Q$, the homotopy type of the pointed spaces $(N/P,[P])$ and $(N/Q,[Q])$ is invariant under handle additions. This arises from a direct analogy with the Conley index.

(2) The following is implicit in the proof of Theorem \ref{thm:detect}. Let $(N,P,Q)$ be a coloured manifold and consider links in the interior of $N$ up to ambient (in $N$) isotopy. Denote by $\mathcal{K}(N)$ the set of link classes that can be represented by a link $\cup c_i$ which is parallel to a link $\cup \tau_i$ entirely contained in $P$ and also parallel to a link entirely contained in $Q$. Then if $N_0 \leadsto N$ one has $\mathcal{K}(N_0) \subseteq \mathcal{K}(N)$; the reason is that any link in $P$ (or $Q$) can be pushed slightly into $P$ so as to make it disjoint from $\partial P$ and then the handles can be pasted avoiding the link. Notice that $\mathcal{K}$ is not invariant under handle additions like the first example, but still changes in a controlled way.
\end{example}

Back to dynamics, and as an illustration, suppose we implement the general strategy described above using the partial invariant $\mathcal{K}$. A straightforward reinterpretation of the arguments in Subsection \ref{subsec:detect} leads to the following:

\begin{theorem*} Let $N \subseteq \mathbb{R}^3$ be an isolating block that contains an invariant link $K$ of knot-like sets. Assume that $K$ is nullhomologous in $N$. If the link type of $K$ is not represented in $\mathcal{K}(N)$ then every neighbourhood $U$ of $K$ contains a point $p \in U - K$ such that the full trajectory of $p$ is contained in $N$.
\end{theorem*}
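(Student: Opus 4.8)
The plan is to reinterpret the proof of Theorem~\ref{thm:detect}, stopping one step earlier: instead of finishing with Dehn's lemma we invoke the definition of $\mathcal{K}(N)$ directly. As there, the argument is by contradiction, and the weaker hypothesis ``$K$ nullhomologous in $N$'' (rather than ``contractible in $N$'') will be exactly what is needed, precisely because Dehn's lemma is no longer used.

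Assume the conclusion fails. Then some neighbourhood $U$ of $K$ meets ${\rm Inv}\ N$ only in $K$, so ${\rm Inv}\ N = K \uplus K'$ with $K'$ a (possibly empty) isolated invariant set. By the generalized handle theorem (Theorem~\ref{teo:asasgen}) there are a regular isolating block $T$ for $K$ and an isolating block $B$ for $K'$, disjoint and contained in ${\rm int}\ N$, such that $N$ is obtained from $T \cup B$ by pasting coloured $1$-- and $2$--handles; that is, $T \cup B \leadsto N$. By Lemma~\ref{lem:identify} the block $T$ is a disjoint union of solid tori $T_i$ whose cores $c_i$ can be taken polyhedral; they lie in ${\rm int}\ N$ and form a polygonal model of $K$, so the link $\cup c_i$ has the same link type as $K$.

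Now apply Lemma~\ref{lem:boundary} to $N_0 = T_1 \cup \ldots \cup T_r \cup B$. Since $K$ is nullhomologous in $N$ each $c_i$ vanishes in $H_1(N)$, so the lemma yields a link $\cup \tau_i$ contained in $P$ and parallel in $N$ to $\cup c_i$, and likewise (pushing the same $t$--curves into the dark subset of $\partial T_i$ instead) a link $\cup \tau'_i$ contained in $Q$ and parallel in $N$ to $\cup c_i$. This is exactly the defining property of $\mathcal{K}(N)$, so the class of $\cup c_i$ lies in $\mathcal{K}(N)$; hence the link type of $K$ is represented in $\mathcal{K}(N)$, contradicting the hypothesis. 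I do not expect a genuine obstacle here: all of the substance is already available (the handle theorem, the identification of $T$ as a union of solid tori, and Lemma~\ref{lem:boundary}), and the only thing to state with care is the bookkeeping — that $\cup c_i \subseteq {\rm int}\ N$, that its link type in $\mathbb{R}^3$ is that of $K$, and that the class placed by Lemma~\ref{lem:boundary} into $\mathcal{K}(N)$ is precisely this one — all immediate from the definitions of Subsection~\ref{subsec:detect} and the construction of the $T_i$.
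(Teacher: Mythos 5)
Your argument is correct and is exactly the ``straightforward reinterpretation'' of the proof of Theorem~\ref{thm:detect} that the paper has in mind: keep the handle decomposition $T\cup B \leadsto N$ from Theorem~\ref{teo:asasgen}, the identification of $T$ as a union of solid tori with polygonal core link $\cup c_i$ of the same link type as $K$ (Lemma~\ref{lem:identify}), apply Lemma~\ref{lem:boundary} twice (once pushing into $P$, once into $Q$), and then read the contradiction off the definition of $\mathcal{K}(N)$ rather than finishing with Dehn's lemma. The only thing worth flagging is that ``$K$ nullhomologous in $N$'' must be read as \emph{each} component being nullhomologous (so that Lemma~\ref{lem:boundary}'s hypothesis ``each $c_i=0$ in $H_1(N)$'' holds), which is the intended reading since it is the direct weakening of each component contractible.
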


This generalizes the main theorem in that $K$ is assumed to be nullhomologous but not necessarily contractible. (When it is, Theorem \ref{thm:main} follows from this by Dehn's lemma). Notice that by its very definition $\mathcal{K}(N)$ is computable from the colouring of $N$. For example, if $N$ is the solid torus of Figure \ref{fig:counter} then $\mathcal{K}(N)$ only contains the trivial link type (for any given number of components) because the white subset of $\partial N$ consists of a single disk which can only carry trivial links.

Any invariant (or partial invariant, as $\mathcal{K}$) under coloured handle addition would lead, with exactly the same procedure, to results structurally similar to this. Clearly, the more sensitive the invariant, the more sensitive the criterion to find additional invariant structure becomes.

\section{Appendix. Proof of Theorem \ref{teo:reg_surf_1}} \label{app:frame}

We recall the setting here. $S$ is a compact surface containing a compact set $L$ in its interior. The assumption is that $\check{H}^i(L)$ is finitely generated for $k = 0,1$. We consider three types of operations: removing a disk $E$ from $S$, cutting $S$ along a strip $E$, or removing a whole connected component $E$ from $S$. For the sake of brevity we call these operations ($R_0$), ($R_1$) and ($R_2$). $E$ is always assumed to be disjoint from $L$ so that the surface $S'$ that results after applying the operation still contains $L$ in its interior.

For geometric reasons it is best to work with coefficients in $\mathbb{Z}_2$. For $i = 0,1,2$ we let $r_i = {\rm rk}\ \check{H}^{2-i}(S,L;\mathbb{Z}_2)$, and similarly $r'_i = {\rm rk}\ \check{H}^{2-i}(S',L;\mathbb{Z}_2)$. These are finite by the long exact sequence of the pair $(S,L)$, the fact that any surface has a finitely generated (\v{C}ech) cohomology, and $L$ has a finitely generated cohomology with $\mathbb{Z}_2$ coefficients by the universal coefficients theorem. In the sequel we shall not display the $\mathbb{Z}_2$ coefficients explicitly in the notation.

Although the $r_i$ are defined in terms of \v{C}ech cohomology, the constructions which follow are best visualized when homology is used. By Alexander duality in manifolds with boundary (this may not be entirely standard; see the Appendix in \cite{mio1} for a proof) there is an isomorphism $\check{H}^i(S,L) = H_{2-i}(S-L,\partial S)$, and so $r_i = {\rm rk}\ H_i(S-L,\partial S)$.

Without loss of generality we shall assume $S$ to be triangulated and use simplicial homology. For later use we remark that every nonzero element $z \in H_1(S-L,\partial S)$ can be represented as a sum of disjoint polygonal curves $\gamma_i$ such that each $\gamma_i$ is either: (i) a simple closed curve contained in the interior of $S$ or (ii) a simple curve which meets $\partial S$ precisely at its endpoints. This follows from a straightforward manipulation with a simplicial representative of $z$ and the fact that coefficients are taken in $\mathbb{Z}_2$.
\medskip

The proof of Theorem \ref{teo:reg_surf_1} consists in using operations $R_i$ to gradually decrease the $r_i$ until they are all zero. More precisely, we shall prove the following:
\begin{itemize}
	\item[(i)] If $r_0 > 0$, a suitable operation of type $R_0$ yields an $S'$ with $r'_0 = r_0 - 1$ and $r'_i = r_i$ for $i = 1,2$.
	\item[(ii)] If $r_0 = 0$ and $r_1 > 0$, a suitable operation of type $R_1$ yields an $S'$ with $r'_0$ still zero, $r'_1 = r_1 - 1$ and $r'_2 = r_2$.
\end{itemize}

Thus applying (i) repeatedly we can trim the original $S$ down until $r_0 = 0$; then applying (ii) to this new, trimmed down $S$, we can achieve that $r_1 = 0$ while still $r_0 = 0$. It will only remain to make $r_2 = 0$, and this will just be a matter of removing superfluous components of $S$.

Figures \ref{fig:(1)eh} and \ref{fig:(2)eh} show how the process described works in a particular example. The set $L$ is the grey thin annulus. The initial $S$ of Figure \ref{fig:(1)eh}.(a) has $r_0 = 1$; removing a small disk centered at $z$ yields $S'$ with $r'_0 = 0$ as in Figure \ref{fig:(1)eh}.(b). Renaming the latter $S$ and starting afresh with it in Figure \ref{fig:(2)eh}.(a) we have $r_1 = 1$. The curve $\gamma$ generates $H_1(S - L,\partial S)$, and removing it with an operation $R_1$ the resulting $S'$ has $r'_0 = r'_1 = 0$. It does not contain any superfluous components, so $r'_2 = 0$ and consequently it is not necessary to perform any $R_2$ operations: $S'$ already satisfies $\check{H}^*(S',L) = 0$.

\begin{figure}[h]
\null\hfill
\subfigure[$z$ generates $H_0(S-L,\partial S)$]{
\begin{pspicture}(0,0)(4,3.4)
%\psgrid(0,0)(4,3.4)
\rput[bl](0,0){\scalebox{0.5}{\includegraphics{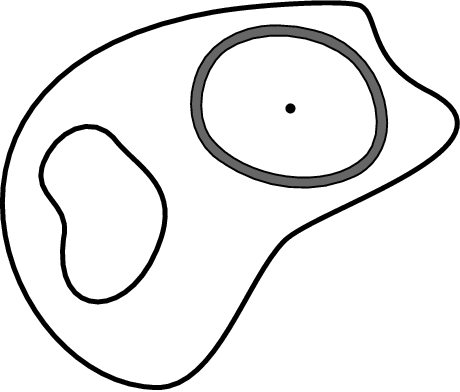}}}
\rput[bl](2,0.2){$S$} \rput[r](1.5,2.5){$L$} \rput(2.65,2.25){$z$}
\end{pspicture}}
\hfill
\subfigure[Type $R_0$ operation: remove a small disk $E$ centered at $z$]{
\begin{pspicture}(0,0)(4,3.4)
%\psgrid(0,0)(4,3.4)
\rput[bl](0,0){\scalebox{0.5}{\includegraphics{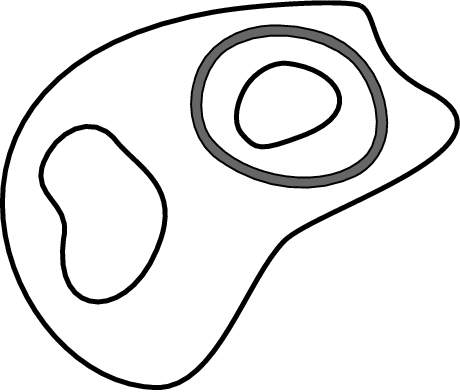}}}
\rput[bl](2,0.2){$S'$} \rput[r](1.5,2.5){$L$}
\end{pspicture}}
\hfill\null
\caption{Making $r_0 = 0$ \label{fig:(1)eh}}
\end{figure}

\begin{figure}[h]
\null\hfill
\subfigure[$\gamma$ generates $H_1(S-L,\partial S)$]{
\begin{pspicture}(0,0)(4,3.4)
%\psgrid(0,0)(4,3.4)
\rput[bl](0,0){\scalebox{0.5}{\includegraphics{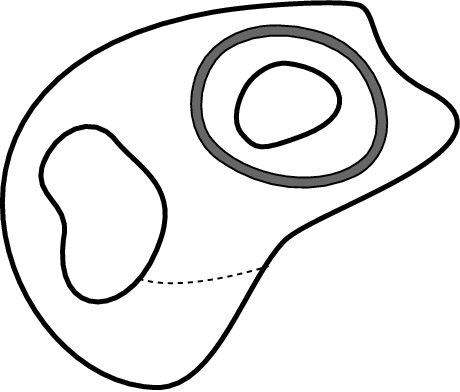}}}
\rput[bl](2,0.2){$S$} \rput[r](1.5,2.5){$L$} \rput(1.5,0.7){$\gamma$}
\end{pspicture}}
\hfill
\subfigure[Type $R_1$ operation: remove a thin ribbon $E$ along $\gamma$]{
\begin{pspicture}(0,0)(4,3.4)
%\psgrid(0,0)(4,3.4)
\rput[bl](0,0){\scalebox{0.5}{\includegraphics{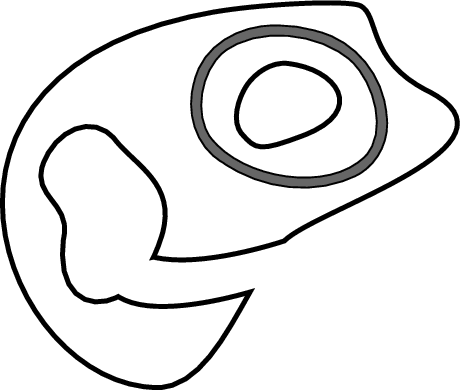}}}
\rput[bl](2,0.2){$S'$} \rput[r](1.5,2.5){$L$}
\end{pspicture}}
\hfill\null
\caption{Making $r_1 = 0$ \label{fig:(2)eh}}
\end{figure}

We begin with a simple proposition which will be used to relate $r_i$ to $r'_i$.

\begin{proposition} \label{prop:exacta} There exists an exact sequence \[\ldots \longrightarrow H_k(E,E \cap \partial S) \stackrel{j_*}{\longrightarrow} H_k(S-L,\partial S) \stackrel{f}{\longrightarrow} H_k(S'-L,\partial S') \longrightarrow \ldots\] where $j$ denotes the inclusion.
\end{proposition}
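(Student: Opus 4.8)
The plan is to realize the desired sequence as the homology long exact sequence of the triple $(S-L,\ \partial S\cup E,\ \partial S)$, after identifying two of its three consecutive terms. Recall that $E$ is disjoint from $L$, so $E\subseteq S-L$ and $E\cap\partial S\subseteq\partial S$; that $S'=\overline{S-E}$; and that, uniformly in all three operation types, one has $\partial S=\partial_0\cup(E\cap\partial S)$ and $\partial S'=\partial_0\cup C$, where $\partial_0:=\partial S\cap S'$ and $C:=\operatorname{fr}_S E=S'\cap E$. Writing down the triple sequence
\[\cdots\to H_k(\partial S\cup E,\partial S)\xrightarrow{\,i_*\,}H_k(S-L,\partial S)\xrightarrow{\,\ell_*\,}H_k(S-L,\partial S\cup E)\xrightarrow{\,\delta\,}H_{k-1}(\partial S\cup E,\partial S)\to\cdots,\]
with $i$ and $\ell$ the obvious inclusions, it will only remain to identify the outer two terms $H_*(\partial S\cup E,\partial S)$ and $H_*(S-L,\partial S\cup E)$.

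For the first term I would use the elementary observation that $\partial S\cup E$ is just $E$ with $\partial S$ glued on along $E\cap\partial S$, so that the inclusion $(E,E\cap\partial S)\hookrightarrow(\partial S\cup E,\partial S)$ induces a homeomorphism $E/(E\cap\partial S)\to(\partial S\cup E)/\partial S$; since everything is polyhedral these are good pairs, hence this inclusion induces an isomorphism $H_k(E,E\cap\partial S)\xrightarrow{\cong}H_k(\partial S\cup E,\partial S)$, and the composite with $i_*$ is precisely the inclusion-induced map $j_*$ of the statement. For the third term I would prove a homeomorphism $(S-L)/(\partial S\cup E)\cong(S'-L)/\partial S'$: since $S-L$ is the union of the two closed sets $S'-L$ and $E$, meeting exactly in $C$, collapsing $E$ identifies $C$ to a point and gives $(S-L)/E\cong(S'-L)/C$ (both have underlying set $(S\setminus(E\cup L))\sqcup\{\ast\}$, and one checks the two quotient topologies coincide); collapsing in addition the image of $\partial S=\partial_0\cup(E\cap\partial S)$ — which on the right becomes the image of $\partial_0\cup C=\partial S'$, because $E\cap\partial S$ has already been absorbed into $C$ — yields the claimed homeomorphism, and again the good-pair property gives $H_k(S-L,\partial S\cup E)\cong H_k(S'-L,\partial S')$. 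Then $f$ is by definition $\ell_*$ followed by this isomorphism, and substituting both identifications into the triple sequence produces exactly the stated exact sequence.

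The two quotient-space identifications are the only real content, and the step I expect to cause the most friction is the point-set bookkeeping attached to them rather than anything conceptual. One has to check that $(S-L,\partial S)$, $(S-L,\partial S\cup E)$ and $(S'-L,\partial S')$ are good pairs — which holds because $L$ lies in the interior of $S$ and is disjoint from $E$, so that $\partial S$, $\partial S\cup E$ and $\partial S'$ each admit neighbourhoods (a collar of $\partial S$ patched with a regular neighbourhood of the polyhedron $E$) that miss $L$ and deformation retract onto them — and one has to verify that the quotient topologies really agree, paying attention to the corners where a strip $E$ meets $\partial S$. Doing this carefully is what makes the argument uniform: the three operations $R_0$, $R_1$, $R_2$ are handled simultaneously, the only differences between them being encoded in the pair $(E,E\cap\partial S)$ (respectively a disk rel $\emptyset$, a disk rel two arcs, and a closed subsurface rel its boundary) and in the splittings $\partial S=\partial_0\cup(E\cap\partial S)$, $\partial S'=\partial_0\cup C$.
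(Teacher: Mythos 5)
Your argument is correct and reaches the paper's sequence by essentially the same underlying computation, packaged slightly differently. Where you take the long exact sequence of the triple $(S-L,\partial S\cup E,\partial S)$ and then apply excision to rewrite $H_k(\partial S\cup E,\partial S)$ as $H_k(E,E\cap\partial S)$, the paper invokes the relative Mayer--Vietoris sequence for the decomposition $(S-L,\partial S\cup E)=(S-L,\partial S)\cup(E,E)$; since $H_*(E,E)=0$ these two devices produce literally the same three-term pattern, so there is nothing to choose between them. The genuine divergence is in how each establishes the key isomorphism $H_k(S-L,\partial S\cup E)\cong H_k(S'-L,\partial S')$. You collapse quotients --- building $(S-L)/(\partial S\cup E)\cong(S'-L)/\partial S'$ and appealing to the good-pair property --- and you rightly identify the quotient-topology bookkeeping at the corners as the friction point. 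The paper sidesteps this entirely: it deletes the core $E_*$ of $E$ (its center if $E$ is a disk, its central arc if $E$ is a strip), observes that $(S-L-E_*,\partial S\cup E-E_*)$ strong deformation retracts onto $(S'-L,\partial S')$, and then excises $E_*$ from $(S-L,\partial S\cup E)$. This keeps everything at the level of open subsets and deformation retractions in a polyhedral setting, with no quotient topologies to compare, which is cleaner and more robust to exactly the corner issues you flag. Both proofs work; yours simply demands a bit more care at the point-set level.
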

\begin{proof} Denote by $E_*$ either the center of $E$ (if $E$ is a disk) or the central arc of $E$ (if $E$ is a closed strip). Clearly $(S-L-E_*,\partial S \cup E - E_*)$ strongly deformation retracts onto $(S'-L,\partial S')$ and therefore \begin{equation} \label{eq:surf_primas} H_k(S'-L,\partial S') = H_k(S-L-E_*,\partial S \cup E - E_*) = H_k(S-L,\partial S \cup E)\end{equation} because of the excision property. Now write the pair $(S-L,\partial S \cup E)$ as the union $(S-L,\partial S) \cup (E,E)$: the Mayer--Vietoris sequence corresponding to this decomposition reads \[\ldots \longrightarrow H_k(E,E \cap \partial S) \longrightarrow H_k(S-L,\partial S) \longrightarrow H_k(S-L,\partial S \cup E) \longrightarrow \ldots\] and replacing the last group with $H_k(S'-L,\partial S')$ by \eqref{eq:surf_primas} we obtain the desired exact sequence.
\end{proof}

Let us start with the proof of Theorem \ref{teo:reg_surf_1} proper.
\medskip

{\it Step 1.} Suppose first that there exists a nonzero element in $\check{H}^2(S,L) = H_0(S - L,\partial S)$. This means that there exists a component $C$ of $S - L$ that does not meet $\partial S$. Choose any point $z \in C$ and a small closed disk $E \subseteq C$ centered at $z$. Perform an operation of type $R_0$ deleting $E$ from $S$ to obtain $S'$. From Proposition \ref{prop:exacta} we have the exact sequence \[\ldots \longrightarrow H_0(E) \stackrel{j_*}{\longrightarrow} H_0(S-L,\partial S) \stackrel{f}{\longrightarrow} H_0(S'-L,\partial S') \longrightarrow 0.\] By construction $[z]$ generates ${\rm im}\ j_*$, so $\ker f$ has rank $1$ and consequently $r'_0 = r_0 - 1$. Notice also that $\ker j_* = 0$, so $r'_1 = r_1$. It is also easy to check that $r'_2 = r_2$.

Now we repeat this process starting with $S'$ to obtain (through an $R_0$ operation on $S'$) another surface $S''$ with $r''_0 = r_0 - 2$. Notice that $S''$ can alternatively be thought of as arising from $S$ from two consecutive $R_0$ operations with disjoint supports. After $r_0$ repetitions we will be left with a surface $S^{(r_0)}$ which arises from the original one through a sequence of $r_0$ operations of type $R_0$ and satisfies that $\check{H}^2(S^{(r_0)},L) = 0$. For notational ease we now discard the original $S$ and rename $S^{(r_0)}$ as $S$. This concludes Step 1.
\medskip

{\it Step 2.} At the end of the previous step we were left with a surface $S$ that contains $L$ in its interior and such that $\check{H}^2(S,L) = 0$, which by Alexander duality means that every connected component of $S - L$ intersects $\partial S$. Suppose that there is a nonzero element in $\check{H}^1(S,L) = H_1(S - L, \partial S)$.
\smallskip

{\it Claim.} There exists a polygonal simple curve $\gamma \subseteq S - L$ which meets $\partial S$ precisely at its endpoints and such that $[\gamma]$ is nonzero in $H_1(S-L,\partial S)$.

{\it Proof of claim.} Pick a nonzero element $z \in H_1(S-L,\partial S)$. We may write $z \sim \sum_i \gamma_i$ where the $\gamma_i$ are disjoint simple curves, each $\gamma_i$ either (i) being closed or (ii) meeting $\partial S$ precisely at its endpoints. Notice that each $\gamma_i$ is an element of $H_1(S-L,\partial S)$ and at least one of them must be nonzero, say $\gamma_{i_0}$. If $\gamma_{i_0}$ is of type (ii) we let $\gamma = \gamma_{i_0}$. If it is a simple closed curve we argue as follows. Since $\check{H}^2(S,L) = H_0(S - L,\partial S) = 0$ by hypothesis, each connected component of $S - L$ meets $\partial S$. It is then easy to see that there is a simple polygonal path in $S - L$ joining a vertex $v$ of $\gamma_{i_0}$ to a point in $\partial S$ and having no more intersections with $\gamma_{i_0}$. Replacing this path by two very close parallel copies of itself we obtain, as shown in Figure \ref{fig:closed_not}.(b), a simple path $\gamma$ homologous to $\gamma_{i_0}$ and having intersection with $\partial S$ precisely at its endpoints. This concludes the proof of the claim. $_{\blacksquare}$

\begin{figure}[h]
\null\hfill
\subfigure[]{
	\begin{pspicture}(0,0)(3,2)
	%\psgrid(0,0)(3,2)
	\rput[bl](0,0){\scalebox{0.5}{\includegraphics{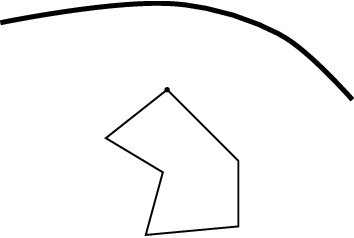}}}
	\rput[bl](2.2,0.4){$\gamma_{i_0}$} \rput[bl](0.4,1.6){$\partial S$} \rput(1.4,1.4){$v$}
	\end{pspicture}}
\hfill
\subfigure[]{
	\begin{pspicture}(0,0)(3.5,2)
	%\psgrid(0,0)(3.5,2)
	\rput[bl](0,0){\scalebox{0.5}{\includegraphics{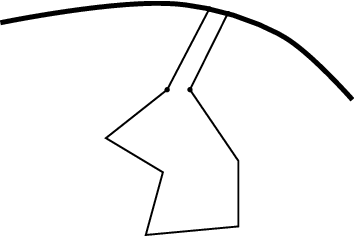}}}
	\rput[bl](2.2,0.4){$\gamma \sim \gamma_{i_0}$} \rput[bl](0.4,1.6){$\partial S$} \rput(1.2,1.4){$v'$} \rput[l](1.8,1.4){$v''$}
	\end{pspicture}}
\hfill\null
\caption{\label{fig:closed_not}}
\end{figure}
\smallskip

Now let $E$ be a thin strip along $\gamma$ and, as usual, denote by $S'$ the manifold $S$ with $E$ removed (this is an operation of type $R_1$). The sequence of Proposition \ref{prop:exacta} reads \[H_1(E,E \cap \partial S) = \mathbb{Z}_2 \stackrel{j_*}{\longrightarrow} H_1(S-L,\partial S) \stackrel{f}{\longrightarrow} H_1(S'-L,\partial S') \longrightarrow 0,\] where $H_1(E,E \cap \partial S)$ is actually generated by $[\gamma]$. Again $[\gamma] \in {\rm im}\ j_*$ and therefore $r'_1 = r_1 - 1$ whereas $r'_i = r_i$ for $i = 0,2$.

Repeating this $r_1$ times leads to a surface $S^{(r_1)}$ which is obtained from $S$ by a sequence of $r_1$ operations of type $R_1$ and has $\check{H}^1(S^{(r_1)},L) = 0$. As before, to simplify the notation we denote $S^{(r_1)}$ by $S$. This concludes the second step.
\medskip

{\it Step 3.} To complete the construction of $P$ it only remains to observe that $r_2 = \check{H}^0(S,L)$ is the number of connected components of $S$ that do not meet $L$. Then by removing those components with an $R_2$ operation for each we obtain a new $S^{(r_2)}$ with $\check{H}^0(S^{(r_2)},L) = 0$. Clearly we still have $\check{H}^i(S^{(r_2)},L) = 0$ for $i = 1,2$. Setting $P := S^{(r_2)}$ concludes the proof of the theorem. Indeed, by construction we have $\check{H}^*(P-L,\partial P;\mathbb{Z}_2) = 0$ which shows that each component of $P-L$ contains exactly one component of $\partial P$. Then arguing as in the Claim of page \pageref{pg:punctured} each component of $P-L$ is a once-punctured closed disk. The result follows.

\bibliographystyle{plain}
\bibliography{regular}

\end{document}